\DeclareMathOperator*{\sgn}{\ensuremath{sign}}
\DeclareMathOperator*{\vol}{\ensuremath{Vol}}
\newcommand{\rf}[1]{(\ref{#1})}
\newcommand{\RQ}{\mathrm{RQ}}
\newcommand{\Om}{\Omega}
\newcommand{\om}{\omega}
\newcommand{\baro}{\bar \omega}
\newcommand{\nequiv}{\equiv \hspace{-.4cm} \slash \hspace{.15cm}}
\newcommand{\diver}{\mathrm{div} \hspace{.03cm}}
\newcommand{\R}{\ensuremath{\mathbb{R}}}
\theoremstyle{plain}
\newtheorem{theorem}{Theorem}[section]
\newtheorem{lemma}[theorem]{Lemma}
\newtheorem{proposition}[theorem]{Proposition}
\newtheorem{corollary}[theorem]{Corollary}
\theoremstyle{definition}
\newtheorem{definition}[theorem]{Definition}
\theoremstyle{remark}
\newtheorem{remark}[theorem]{Remark}
\newtheorem{conjecture}{Conjecture}
\begin{document}
\bibliographystyle{plain} %plain apalike alpha unsrt
\title{ $p$-Laplace Operators for Oriented Hypergraphs
}
\author[1,2]{J\"urgen Jost}
\author[1]{Raffaella Mulas}
\author[1]{Dong Zhang}
\affil[1]{Max Planck Institute for Mathematics in the Sciences, Leipzig, Germany}
\affil[2]{Santa Fe Institute for the Sciences of Complexity, Santa Fe, New Mexico, USA}

\date{}
\maketitle

\begin{abstract}The $p$-Laplacian for graphs, as well as the vertex Laplace operator and the hyperedge Laplace operator for the general setting of oriented hypergraphs, are generalized. In particular, both a vertex $p$-Laplacian and a hyperedge $p$-Laplacian are defined for oriented hypergraphs, for all $p\geq 1$. Several spectral properties of these operators are investigated.
\vspace{0.2cm}

\noindent {\bf Keywords:} Oriented Hypergraphs, Spectral Theory, p-Laplacian
\end{abstract}

%\tableofcontents
\allowdisplaybreaks[4]
%~\vspace{0.9cm}

\section{Introduction}
Oriented hypergraphs are hypergraphs with the additional structure that each vertex in a hyperedge is either an input, an output or both. They have been introduced in \cite{Hypergraphs}, together with two normalized Laplace operators whose spectral properties and possible applications have been investigated also in further works \cite{Master-Stability,Sharp,MulasZhang,AndreottiMulas}. Here we generalize the Laplace operators on oriented hypergraphs by introducing, for each $p\in\R_{\geq 1}$, two $p$-Laplacians. While the vertex $p$-Laplacian is a known operator for graphs (see for instance \cite{Graph-p-Laplacian1,Graph-p-Laplacian2,Graph-p-Laplacian3}), to the best of our knowledge the only edge $p$-Laplacian for graphs that has been defined is the classical one for $p=2$.\newline

\textbf{Structure of the paper.} In Section \ref{section:Euclidean}, for completeness of the theory, we discuss the $p$-Laplacian on Euclidean domains and Riemannian manifolds, and in Section \ref{section:hyp} we recall the basic notions on oriented hypergraphs. In Section \ref{section:pLaplacians} we define the $p$-Laplacians for $p>1$ and we establish their generalized min-max principle, and similarly, in Section \ref{section:1Laplacians}, we introduce and discuss the $1$-Laplacians for oriented hypergraphs. Furthermore, in Section \ref{section:smallestlargest} we discuss the smallest and largest eigenvalues of the $p$-Laplacians for all $p$, in Section \ref{section:nodal} we prove two nodal domain theorems, and in Section \ref{section:lambdamin} we discuss the smallest nonzero eigenvalue. Finally, in Section \ref{section:vertexpartition} we discuss several vertex partition problems and their relations to the $p$-Laplacian eigenvalues, while in Section \ref{section:hyperedge} we discuss hyperedge partition problems.\newline

In \cite{pLaplacians2} we shall build upon the results developed  in this paper.\newline

\textbf{Related work.} It is worth mentioning that, in \cite{hein2013total},  %and \cite{li2018submodular}, 
other vertex $p$-Laplacians for hypergraphs have been introduced and studied. While these generalized vertex $p$-Laplacians coincide with the ones that we introduce here in the case of graphs, they do not coincide for general hypergraphs. Also, \cite{hein2013total} focuses on classical hypergraphs, while we consider, more generally, oriented hypergraphs.

%\color{blue} Precisely, the $p$-Laplace operators on hypergraphs introduced in \cite{hein2013total} are related to the  Lov\'asz extension. For convenience,   we would like to say {\sl Lov\'asz p-Laplacians} instead. These $p$-Laplacians on a hypergraph $(V,H)$ are determined by the total variation and its regularization  functionals of the form $f\mapsto \sum_{h\in H}\max\limits_{i,j\in h}|f(i)-f(j)|^p$, which are non-smooth in general, even for the case of $p>1$.  While, the $p$-Laplacians proposed in the present paper are induced by the incidence matrix of a hypergraph directly. Thus,  we can say {\sl incidence $p$-Laplacians} instead. The corresponding  functionals are in the form of $f\mapsto \sum_{h\in H}|\sum_{i\in h_{in}} f(i)-\sum_{j\in h_{out}} f(j)|^p$, which are smooth if $p>1$. 

\subsection{The $p$-Laplacian on Euclidean domains and Riemannian manifolds}\label{section:Euclidean}
There is a strong analogy between Laplace operators on Euclidean domains and Riemannian manifolds on one hand and their discrete versions on graphs and hypergraphs, and this is also some motivation for our work. Therefore, it may be useful to briefly summarize the theory on Euclidean domains and Riemannian manifolds.

 Let $\Omega \subset \R^n$ be a bounded domain, with piecewise Lipschitz boundary $\partial \Omega$, in order to avoid technical issues that are irrelevant for our purposes. More generally, $\Om$ could also be such a domain in a Riemannian manifold. 

Let first $1<p<\infty$. For $u$ in the Sobolev space $W^{1,p}(\Omega)$, we may consider the functional 
 \begin{equation}
   \label{euc1}
   I_p(u)=\int_\Omega |\nabla u|^p dx.
 \end{equation}
Its Euler-Lagrange operator is the $p$-Laplacian
\begin{equation}
  \label{euc2}
  \Delta_p u= -\diver (|\nabla u|^{p-2}\nabla u)\ ;
\end{equation}
for $p=2$, we have, of course, the standard Laplace operator. 
Note that we use the $-$ sign in \eqref{euc2} both to make the operator a positive one and to conform to the conventions used in this paper. The eigenvalue problem arises when we look for critical points of $I_p$ under the constraint
\begin{equation}
  \label{euc3}
  \int_\Omega |u|^p dx =1,
\end{equation}
or equivalently, if we seek critical points of the Rayleigh quotient
\begin{equation}
  \label{euc4}
\frac{\int_\Omega |\nabla u|^p dx}{\int_\Omega |u|^p dx}
\end{equation}
among functions $u \nequiv 0$. To make the problem well formulated, we need to impose a boundary condition, and we consider here the Dirichlet condition
\begin{equation}
  \label{euc5}
  u \equiv 0 \text{ on } \partial \Omega.
\end{equation}
On a compact Riemannian manifold $M$ with boundary $\partial M$, we can do the same when we integrate in \eqref{euc1}, \eqref{euc3} with respect to the Riemannian volume measure, and let $\nabla$ and $\diver$ denote the Riemannian gradient and divergence operators. When $\partial M=\emptyset$, we do not need to impose a boundary condition. 

Eigenfunctions and eigenvalues then have to satisfy the equation
\begin{equation}
  \label{euc6}
  \Delta_p u= \lambda |u|^{p-2}u. 
\end{equation}
For $1<p <\infty$, the functionals in \eqref{euc1} and \eqref{euc3} are strictly convex, and the spectral theory is similar to that for $p=2$, that is, the case of the ordinary Laplacian, which is a well studied subject. (See for instance \cite{Valtorta14} for the situation on a Riemannian manifold.) For $p=1$, however, the functionals are no longer strictly convex, and things get more complicated. \rf{euc6} then formally becomes
\begin{equation}
  \label{euc7}
  -\diver \left( \frac{\nabla u}{|\nabla u|}\right)=\lambda \frac{u}{|u|}.
\end{equation}
In \eqref{euc6} for $p>1$, we may put the right hand side $=0$ at points where $u=0$, but this is no longer possible in \eqref{euc7}. This eigenvalue problem has been studied by Kawohl, Schuricht and their students and collaborators, as well as by Chang, and we shall summarize their results. Some references are \cite{Kawohl03,Schuricht06,Kawohl07,Chang2,Milbers10,Milbers13,Parini11,Littig14,Kawohl15,Lucia20}. 

 One therefore formally replaces \eqref{euc7} by defining substitute $z$ of $\frac{\nabla u}{|\nabla u|}$ and a substitute $s$ of $\frac{u}{|u|}$, leading to 
\begin{equation}
  \label{euc8}
  -\diver z=\lambda s
\end{equation}
where $s\in L^\infty(\Omega)$ satisfies
\begin{equation}
  \label{euc9}
  s(x)\in \mathrm{Sgn}(u(x))
\end{equation}
with
 $$\mathrm{Sgn}(t):=\begin{cases}
 \{1\} & \text{if } t>0,\\
 [-1,1] & \text{if }t=0,\\
 \{-1\} & \text{if }t<0,
 \end{cases}
$$
and the vector field $z\in L^{\infty}(\Omega,\R^n)$ satisfies
\begin{equation}
  \label{euc10}
  \|z\|_\infty=1,\quad \diver z \in L^n(\Omega),\quad  -\int_\Omega u\ \diver z dx= I(u)
\end{equation}
where
\begin{equation}
  \label{euc11}
  I(u)=\int_\Omega |Du|dx + \int_{\partial \Omega} |u^{\partial \Omega}|d\mathcal{H}^{n-1}.
\end{equation}
Again \eqref{euc11} needs some explanation. In fact, while for $p>1$, the natural space to work in is $W^{1,p}(\Omega)$, for $p=1$, it is no longer $W^{1,1}(\Omega)$, but rather $BV(\Omega)$. This space (for a short introduction, see for instance \cite{Jost98}) consists of all functions $L^1(\Omega)$ for which
\begin{equation}
  \label{euc12}
  |Du|(\Om)=\sup\left\{\int_\Om u\diver gdx:g=(g^1,\ldots g^n)\in C_0^\infty(\Om,\R^n),
  |g(x)|\le1\text{ for all }  x\in\Om\right\}<\infty
\end{equation}
Note that when $u \in C^1(\Om)$, we have 
$$ \int_\Om u\diver gdx= -\int_\Om \sum_i g^i \frac{\partial u}{\partial x^i} dx,$$
and thus, $BV$-functions permit such an integration by parts in a weak sense. More precisely, for a $BV$-function $u$, its distributional gradient is represented by a finite $\R^n$ valued signed measure $|Du|dx$, and we can write
\begin{equation}
  \label{euc13}
\int_\Om u\diver gdx= -\int_\Om g|Du|dx \text{ for } g\in C_0^\infty(\Om,\R^n).
\end{equation}
Also, $u\in BV(\Om)$ has a well-defined trace $u^{\partial \Omega}\in L^1(\partial \Om)$, and \eqref{euc13} generalizes to
\begin{equation}
  \label{euc14}
\int_\Om u\diver h dx= -\int_\Om h|Du|dx +\int_{\partial \Om}u^{\partial \Omega} (h\nu)d\mathcal{H}^{n-1} \text{ for } h\in C^1(\Om,\R^n)\cap C(\overline{\Om},\R^n)
\end{equation}
where $\nu$ is the outer unit normal of $\partial \Om$. \\
Importantly, $BV$-functions can be discontinuous along hypersurfaces. A Borel set $E\subset \Om$ has finite perimeter if its characteristic function $\chi_E$ satisfies
$$ |D\chi_E|(\Om) \Big( =  \sup\left\{\int_E\diver g:
       g\in C_0^\infty(\Om,\R^n), |g|\le1\right\}\Big)<\infty .$$
For instance, if the boundary of $E$ is a  compact Lipschitz hypersurface, then the perimeter of $E$ is simply the Hausdorff measure $\mathcal{H}^{n-1}(\partial E)$. And if $E\subset \Om$, we have
\begin{equation}
  \label{euc15}
 |D\chi_E|:=|D\chi_E|(\R^n)= |D\chi_E|(\Om) + \mathcal{H}^{n-1}(\partial E \cap \partial \Om).
\end{equation}
The problem with \eqref{euc8}, however, is that in general it has too many solutions, as it becomes rather arbitrary on sets of positive measure where $u$ vanishes, see \cite{Milbers13}. The solutions that one is really interested in should be the critical points of a variational principle, with the vanishing of the weak slope of \cite{Liusternik} as the appropriate criterion. Inner variations provide another necessary criterion \cite{Milbers13}. Viscosity solutions provide  another criterion which, however, is still not stringent enough \cite{Kawohl15}.

The Cheeger constant of $\Om$ then is defined as
\begin{equation}
  \label{euc16}
  h_1(\Om):= \inf_{E\subset \overline{\Om}}\frac{|D\chi_E|}{|E|}
\end{equation}
where $|E|$ is the Lebesgue measure of $E$. A set realizing the infimum in \eqref{euc16} is called a Cheeger set, and every bounded Lipschitz domain $\Omega$ possesses at least one Cheeger set. For such a Cheeger set $E\subset \Om$, $\partial E \cap \Om$ is smooth except possibly for a singular set of Hausdorff dimension at most $n-8$ and of constant mean curvature $\frac{1}{n-1}h_1(\Om)$ at all regular points. When $\Om$ is not convex, its Cheeger set need not be unique. 

In fact, $h_1(\Om)$ equals the first eigenvalue of the 1-Laplacian. More precisely,
\begin{equation}
  \label{euc17}
  h_1(\Om)= \inf_{u\in BV(\Om), u\not\equiv 0}\frac{\int_\Omega |Du|dx + \int_{\partial \Omega} |u^{\partial \Omega}|d\mathcal{H}^{n-1}}{\int_\Om |u|dx}=:\lambda_{1,1}(\Om)
\end{equation}
is the smallest $\lambda\neq 0$ for which there is a nontrivial solution $u$ of \eqref{euc7}, and such a $u$ is of the form $\chi_E$ for a Cheeger set, up to a multiplicative factor, of course. Also, if $\lambda_{1,p}(\Om)$ 
denotes the smallest nonzero eigenvalue of \eqref{euc6}, then
\begin{equation}
  \label{euc18}
  \lim_{p\to 1^+}\lambda_{1,p}(\Om)=\lambda_{1,1}(\Om).
\end{equation}
We also have the lower bound
\begin{equation}
  \label{euc19}
 \lambda_{1,p}(\Om)\ge \left(\frac{h_1(\Om)}{p}\right)^p 
\end{equation}
generalizing the original Cheeger bound for $p=2$.

More generally, for any family of eigenvalues $\lambda_{k,p}(\Om)$ of \eqref{euc6}, $\lim_{p\to 1^+}\lambda_{k,p}(\Om)$ is an eigenvalue of \eqref{euc7}. The converse is not true, however; \eqref{euc7} may have more solutions than can be obtained as limits of solutions of \eqref{euc6}. \\

The functional $|Du|$ appears also in image denoising, in so-called TV models (where the acronym TV refers to the fact that $|Du|(\Om)$ is the total variation of the measure $|Du|dx$) introduced in \cite{Rudin92}. There, one wants to denoise a function $f:\Omega \to \R$ by smoothing it, and in the TV models, one wants to minimize a functional of the form
\begin{equation}
  \label{euc20}
  \int_\Om |Du| dx + \mu \int_\Om |u-f|dx. 
\end{equation}
$\int_\Om |u-f|$ is the so-called fidelity term that controls the deviation of the denoised version $u$ from the given data $f$. $\mu >0$ is a parameter that balances the smoothness and the fidelity term.  Formally, a minimizer $u$ has to satisfy an equation of the form
\begin{equation}
  \label{euc21}
 \diver \left( \frac{D u}{|D u|}\right)=\mu \frac{u-f}{|u-f|} 
\end{equation}
which is similar to \eqref{euc7}. It turns out, however, that when such a model is applied to actual data, the performance is not so good, and it has been found preferable to modify \eqref{euc20} to what is called a nonlocal model in image processing \cite{Gilboa07}. In \cite{Jin15}, such a model was derived from geometric considerations, and this may also provide some insight into the relation with the discrete models considered in this paper, we now recall the construction of that reference. \\
Let $\Om$ be a domain in $\R^n$ or some more abstract space, and  $\om:\Om \times\Om \to \R$  a nonnegative, symmetric function. $\om(x,y)$ can be interpreted as some kind of edge weight between the points $x, y$ for any pair $(x,y)\in \Omega\times \Om $. Here $x,y$ can also stand for patches in the image, and in our setting, they could also be vertices in a graph (in which case the integrals below would become sums). We define the average $\baro:\Om \to \R$ of $\om$  by
\[
\baro(x)= \int_\Omega \om(x,y) dy
\]
and assume that $\baro$ is positive almost everywhere. On a graph, while $\om$ is an edge function, $\baro$ would be a vertex function, $\baro(x)$ being the degree of the vertex $x$  with edge weights $\om(x,y)$. 
We first use $\baro(x)$ and $\om(x,y)$ to define the $L^2$-norms for functions $u:\Om\to\R$ and
 vector fields $p$, that is,  $p:\Om\times \Om \to \R$, 
 \begin{eqnarray*}
( u_1, u_2)_{L^2} &:=&\int_\Omega u_1 (x) u_2(x)\baro (x) dx\\
( p_1,p_2 )_{L^2} &:=&\int_{\Omega\times\Omega} p_1(x,y) p_2(x,y)\om(x,y) dxdy
 \end{eqnarray*}
 and the corresponding norms $|u|$ and $|p|$. \\
The discrete derivative of a function (an image)  $u:\Om\to \R$ is defined by
 \begin{equation}\label{euc22}
 D u (x,y)=u(y)-u(x).
 \end{equation}
Even though $Du$ does not depend on $\om$, it is in some sense analogous to a gradient, as we shall see below. 
Its pointwise norm then is given 
\begin{equation}\label{euc23}
|D u|(x) =\left(\frac 1{\baro (x)} \int_\Omega(u(y)-u(x))^2 \om(x,y) dy\right)^{\frac 12}.
\end{equation}
The divergence of a vector field $p:\Om \times\Om \to \R$   is  defined by 
\begin{equation}\label{euc24}
\diver p(x):= \frac 1{\baro(x)} \int_\Omega (p(x,y)-p(y,x)) \om(x,y) dy.
\end{equation}
Note that, in contrast to $Du$ for a function $u$, the divergence
of a vector field depends on the weight $\om$. For  $u:\Om\to \R$ and $p:\Om\times\Om\to \R$, we then have
\begin{equation}
  \label{euc25}
   ( Du, p)_{L^2}= - (  u, \diver p)_{L^2}, 
\end{equation} 
the analog of \eqref{euc13}.

With the  vector field $Du $ and the divergence operator $\diver $, we can define a Laplacian for functions
\begin{equation}\label{euc26} \Delta u(x):=
 -\diver (D u)= u (x)- \frac 1{\baro(x)} \int_\Omega u(y) \om(x,y) dy\ 
   ,
\end{equation}
which in the case of a graph is the Laplacian we have been using. 
The nonlocal TV (or BV) functional of \cite{Jin15} then is
\begin{equation}\begin{array}{rrl}
TV_{\om}(u)&:=&\int_\Omega |D u| \bar \om (x) dx \\
&=& \int_\Omega (\int_\Omega (u(y)-u(x))^2 \om (x,y) d y)^\frac 12 \sqrt {\baro(x)} dx.
\end{array}
\end{equation}
This leads to the nonlocal
TV model
\begin{equation}\label{euc27}
\begin{array}{rcl}
ROF_{\om}(u)&=& TV_{\om}(u)+\mu \int_\Omega |u-f| \baro (x) dx.\\
&=&\int_\Omega(\int _\Omega(u(y)-u(x))^2 \om (x,y) dy)^\frac 12 \sqrt {\baro (x)} dx +\mu
 \int_\Omega\int_\Omega |u(x)-f(x)|\om(x,y) dxdy.
 \end{array} 
 \end{equation}
It should be of interest to explore such models on hypergraphs. That would offer the possibility to account not only for correlations between pairs, but also between selected larger sets of vertices, for instance three collinear ones. 
\subsection{Basic notions on hypergraphs}\label{section:hyp}
\begin{definition}[\cite{Shi92}]
	An \textbf{oriented hypergraph} is a pair $\Gamma=(V,H)$ such that $V$ is a finite set of vertices and $H$ is a set such that every element $h$ in $H$ is a pair of disjoint elements $(h_{in},h_{out})$ (input and output) in $\mathcal{P}(V)\setminus\{\emptyset\}$. The elements of $H$ are called the \textbf{oriented hyperedges}. Changing the orientation of a hyperedge $h$ means exchanging its input and output, leading to the pair $(h_{out},h_{in})$.
\end{definition}With a little abuse of notation, we shall see $h$ as $h_{in}\cup h_{out}$.
\begin{definition}[\cite{MulasZhang}]
 Given $h\in H$, we say that two vertices $i$ and $j$ are \textbf{co-oriented} in $h$ if they belong to the same orientation sets of $h$; we say that they are \textbf{anti-oriented} in $h$ if they belong to different orientation sets of $h$.
\end{definition}

\begin{definition}
 Given $i\in V$, we say that two hyperedges $h$ and $h'$ contain $i$ with the \textbf{same orientation} if $i\in (h_{in}\cap h'_{in})\cup (h_{out}\cap h'_{out})$; we say that they contain $i$ with \textbf{opposite orientation} if $i\in (h_{in}\cap h'_{out})\cup (h_{out}\cap h'_{in})$.
\end{definition}

\begin{definition}[\cite{Sharp}]
The \textbf{degree} of a vertex $i$ is
	\begin{equation*}
	    \deg(i):=\#\text{ hyperedges containing $i$ only as an input or only as an output }
	\end{equation*}and the \textbf{cardinality} of a hyperedge $h$ is
	\begin{equation*}
	    \# h:=\#\{(h_{in}\setminus h_{out})\cup (h_{out}\setminus h_{in})\}=\#\text{ vertices in $h$ that are either only an input or only an output}.
	\end{equation*}
\end{definition}

From now on, we fix such an oriented hypergraph $\Gamma=(V,H)$ on $n$ vertices $1,\ldots,n$ and $m$ hyperedges $h_1,\ldots, h_m$. We assume that there are no vertices of degree zero. We denote by $C(V)$ the space of functions $f:V\rightarrow\R$ and we denote by $C(H)$ the space of functions $\gamma:H\rightarrow\R$.

\section{p-Laplacians for $p>1$}\label{section:pLaplacians}
\begin{definition}\label{def:vertexL}
 Given $p\in\R_{> 1}$, the \textbf{(normalized) vertex $p$-Laplacian} is $\Delta_p:C(V)\to C(V)$, where
 \begin{small}
\begin{equation*}
 \Delta_p f(i):=\frac{1}{\deg(i)}   \sum_{h\ni i}\left|\sum_{i'\text{ input of }h}f(i')-\sum_{i''\text{ output of }h}f(i'')\right|^{p-2}\left(\sum_{j\in h,o_h(i,j)=-1}f(j)-\sum_{j'\in h,o_h(i,j')=1}f(j')\right)
  \end{equation*}\end{small}
  where $$o_h(i,j)=\begin{cases}
  -1,&\text{ if }i,j\in h, i\text{ and }j \text{ are co-oriented in }h\\
  1,&\text{ if }i,j\in h, i\text{ and }j \text{ are anti-oriented in }h\\
  0,&\text{ otherwise.}
  \end{cases}$$
 We define its \textbf{eigenvalue problem} as
  \begin{equation}\label{eq:p-eigenvalue} \Delta_p f=\lambda |f|^{p-2}f.
  \end{equation}We say that a nonzero function $f$ and real number $\lambda$ satisfying \eqref{eq:p-eigenvalue} are an eigenfunction and the corresponding eigenvalue for $\Delta_p$.
\end{definition}
\begin{remark}Definition \ref{def:vertexL} generalizes both the graph $p$-Laplacian and the normalized Laplacian defined in \cite{Hypergraphs} for hypergraphs, which corresponds to the case $p=2$.
\end{remark}

\begin{remark}
The $p$-Laplace operators for classical hypergraphs that were introduced in \cite{hein2013total} coincide with the vertex $p$-Laplacians that we introduced here in the case of simple graphs, but not in the more general case of hypergraphs. In fact, the Laplacians in \cite{hein2013total} are related to the Lov\'asz extension, while the operators that we consider here are defined via the incidence matrix. Also, the corresponding functionals for the $p$-Laplacians in \cite{hein2013total} are of the form
\begin{equation*}
    f\mapsto \sum_{h\in H}\max\limits_{i,j\in h}\left|f(i)-f(j)\right|^p,
\end{equation*}
 and these are non-smooth in general, even for $p>1$. In our case, the corresponding functionals are of the form
 \begin{equation*}
     f\mapsto \sum_{h\in H}\left|\sum_{i\in h_{in}} f(i)-\sum_{j\in h_{out}} f(j)\right|^p,
 \end{equation*}
 and these are smooth for $p>1$. 
\end{remark}

\begin{definition}\label{def:hyperedgeL}
 Given $p\in\R_{> 1}$, the \textbf{(normalized) hyperedge $p$-Laplacian} is $\Delta^H_p:C(H)\to C(H)$, where
\begin{small}
\begin{equation*}
\Delta_p^H\gamma(h):=\sum_{i\in h}\frac{1}{\deg (i)}\left|\sum_{h'\ni i\text{ as input }}\gamma(h')-\sum_{h''\ni i\text{ as output }}\gamma(h'')\right|^{p-2}\Biggl(\sum_{h'\ni i, o_i(h,h')=-1}\gamma(h')-\sum_{h''\ni i, o_i(h,h'')=1}\gamma(h'')\Biggr)
\end{equation*}\end{small}

where 
$$o_i(h,h')=\begin{cases}
  -1,&\text{ if }h,h'\ni i \text{ with the same orientation }\\
  1,&\text{ if }h,h'\ni i \text{ with opposite orientation }\\
  0,&\text{ otherwise.}
  \end{cases}$$
 We define its \textbf{eigenvalue problem} as
  \begin{equation}\label{eq:p-hypereigenvalue} 
  \Delta^H_p \gamma=\lambda |\gamma|^{p-2}\gamma.
  \end{equation}We say that a nonzero function $\gamma$ and a real number $\lambda$ satisfying \eqref{eq:p-hypereigenvalue} are an eigenfunction and the corresponding eigenvalue for $\Delta^H_p$.
\end{definition}
\begin{remark}
    For $p=2$, Definition \ref{def:hyperedgeL} coincides with the one in \cite{Hypergraphs}. Also, as we shall see, while it is known that the nonzero eigenvalues of $\Delta_p$ and $\Delta_p^H$ coincide for $p=2$, this is no longer true for a general $p$.
\end{remark}

\subsection{Generalized min-max principle}\label{section:minmax}
For $p=2$, the Courant-Fischer-Weyl min-max principle can be applied in order to have a characterizations of the eigenvalues of $\Delta_2$ and $\Delta_2^H$ in terms of the \emph{Rayleigh Quotients} of the functions $f\in C(V)$ and $\gamma\in C(H)$, respectively, as shown in \cite{Hypergraphs}. In this section we prove that, for $p>1$, a generalized version of the min-max principle can be applied in order to know more about the eigenvalues of $\Delta_p$ and $\Delta_p^H$. Similar results are already known for graphs, as shown for instance in \cite{Minmax}. Before stating the main results of this section, we define the \emph{generalized Rayleigh Quotients} for functions on the vertex set and for functions on the hyperedge set.

\begin{definition}\label{def:rq}
 Let $p\in\R_{\geq 1}$. Given $f\in C(V)$, its \textbf{generalized Rayleigh Quotient} is
 \begin{equation*}
    \RQ_p(f):=\frac{\sum_{h\in H}\left|\sum_{i\text{ input of }h}f(i)-\sum_{j\text{ output of }h}f(j)\right|^p}{\sum_{i\in V}\deg(i)|f(i)|^p}.
\end{equation*}Analogously, the \textbf{generalized Rayleigh Quotient} of $\gamma\in C(H)$ is
\begin{equation*}
    \RQ_p(\gamma):=\frac{\sum_{i\in V}\frac{1}{\deg (i)}\cdot \biggl|\sum_{h': v\text{ input}}\gamma(h')-\sum_{h'': v\text{ output}}\gamma(h'')\biggr|^p}{\sum_{h\in H}|\gamma(h)|^p}.
\end{equation*}
 \end{definition}
 \begin{remark}
    It is clear from the definition of $\RQ_p(f)$ and $\RQ_p(\gamma)$ that
    \begin{equation*}
        \RQ_{\hat{p}}(f)=0\text{ for some }\hat{p}\,\iff\,\RQ_p(f)=0\text{ for all }p
    \end{equation*}and 
    \begin{equation*}
        \RQ_{\hat{p}}(\gamma)=0\text{ for some }\hat{p}\,\iff\,\RQ_p(\gamma)=0\text{ for all }p.
    \end{equation*}
\end{remark}
\begin{theorem}\label{theo:eigenvaluesRQ}
Let $p\in\mathbb{R}_{>1}$. $f\in C(V)\setminus \{0\}$ is an eigenfunction for $\Delta_p$ with corresponding eigenvalue $\lambda$ if and only if $$\nabla \RQ_p(f)=0 \text{ and }\lambda=\RQ_p(f).$$ Similarly, $\gamma\in C(H)\setminus \{0\}$ is an eigenfunction for $\Delta^H_p$ with corresponding eigenvalue $\mu$ if and only if $$\nabla \RQ_p(\gamma)=0 \text{ and }\lambda=\RQ_p(\gamma).$$
\end{theorem}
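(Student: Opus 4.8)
The plan is to realize $\RQ_p$ as a quotient of two explicit $C^1$ functionals and to identify the eigenvalue equation \eqref{eq:p-eigenvalue} with the vanishing of its gradient. Write $g_h(f):=\sum_{i\in h_{in}}f(i)-\sum_{j\in h_{out}}f(j)$ for each $h\in H$, and set
\[
I(f):=\sum_{h\in H}|g_h(f)|^p,\qquad J(f):=\sum_{i\in V}\deg(i)\,|f(i)|^p,
\]
so that $\RQ_p(f)=I(f)/J(f)$. Since $p>1$, the map $t\mapsto|t|^p$ is continuously differentiable with derivative $p|t|^{p-2}t$ (read as $0$ at $t=0$, the standard convention that also renders $\Delta_p$ well defined when some $g_h$ vanishes), so both $I$ and $J$ are $C^1$ on the finite-dimensional vector space $C(V)$, and $J(f)>0$ for $f\neq 0$ because every vertex has positive degree. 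First I would compute the two gradients. A direct differentiation gives, for each vertex $k$,
\[
\frac{\partial J}{\partial f(k)}=p\,\deg(k)\,|f(k)|^{p-2}f(k),\qquad
\frac{\partial I}{\partial f(k)}=p\sum_{h\ni k}|g_h(f)|^{p-2}g_h(f)\,\varepsilon_h(k),
\]
where $\varepsilon_h(k)=+1$ if $k\in h_{in}$ and $\varepsilon_h(k)=-1$ if $k\in h_{out}$.

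The crux is a purely combinatorial identity matching $\partial I/\partial f(k)$ with $\deg(k)\,\Delta_pf(k)$. Fixing $h\ni k$, I would check case by case (according to whether $k$ is an input or an output of $h$) that
\[
\varepsilon_h(k)\,g_h(f)=\sum_{j\in h,\,o_h(k,j)=-1}f(j)-\sum_{j\in h,\,o_h(k,j)=1}f(j):
\]
if $k\in h_{in}$ then the vertices co-oriented with $k$ are exactly those in $h_{in}$ and the anti-oriented ones those in $h_{out}$, so the right-hand side is $g_h(f)=\varepsilon_h(k)g_h(f)$; if $k\in h_{out}$ the two orientation classes are swapped and the right-hand side becomes $-g_h(f)=\varepsilon_h(k)g_h(f)$. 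Substituting this into the formula for $\partial I/\partial f(k)$ yields exactly $\partial I/\partial f(k)=p\,\deg(k)\,\Delta_pf(k)$. I regard this bookkeeping as the main point, since it is where the input/output structure of the oriented hyperedges is converted into the $o_h$-weights of Definition \ref{def:vertexL}; everything else is formal.

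With the two gradient formulas in hand, the quotient rule gives, componentwise,
\[
\frac{\partial}{\partial f(k)}\RQ_p(f)=\frac{p\,\deg(k)}{J(f)}\Bigl(\Delta_pf(k)-\RQ_p(f)\,|f(k)|^{p-2}f(k)\Bigr).
\]
Since $\deg(k)>0$ and $J(f)>0$, this shows that $\nabla\RQ_p(f)=0$ if and only if $\Delta_pf=\RQ_p(f)\,|f|^{p-2}f$, which settles the implication from critical point to eigenfunction (with $\lambda=\RQ_p(f)$) at once. For the converse, suppose $f$ is an eigenfunction with eigenvalue $\lambda$, i.e.\ $\Delta_pf=\lambda|f|^{p-2}f$; I would pair this equation with $f$ in the degree-weighted sense, multiplying the $k$-th component by $\deg(k)f(k)$ and summing. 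Using $\sum_{i\in h}\varepsilon_h(i)f(i)=g_h(f)$ the left-hand side collapses to $I(f)$ while the right-hand side equals $\lambda J(f)$; hence $\lambda=I(f)/J(f)=\RQ_p(f)$ (this is just Euler's relation for the degree-$p$ homogeneous functionals $I$ and $J$), and the displayed gradient formula then forces $\nabla\RQ_p(f)=0$. Finally, the hyperedge statement is proved in exactly the same way: one sets $J^H(\gamma):=\sum_{h\in H}|\gamma(h)|^p$ and $I^H(\gamma):=\sum_{i\in V}\deg(i)^{-1}\bigl|\sum_{h'\ni i\text{ as input}}\gamma(h')-\sum_{h''\ni i\text{ as output}}\gamma(h'')\bigr|^p$, and uses the analogous identity in which $o_i(h,h')$ plays the role of $o_h(i,j)$, the inputs and outputs of a hyperedge being replaced by the hyperedges containing a given vertex with a given orientation.
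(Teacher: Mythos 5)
Your proposal is correct and takes essentially the same route as the paper's proof: quotient-rule differentiation of $\RQ_p$, identification of the numerator's gradient with $p\,\deg(\cdot)\,\Delta_p f$, and a degree-weighted pairing (Euler-type identity for the $p$-homogeneous functionals) to show an eigenfunction's eigenvalue must equal $\RQ_p(f)$. The only difference is presentational: you make explicit the case-by-case combinatorial identity $\varepsilon_h(k)\,g_h(f)=\sum_{j\in h,\,o_h(k,j)=-1}f(j)-\sum_{j\in h,\,o_h(k,j)=1}f(j)$ and the degree weighting in the pairing, both of which the paper uses but leaves implicit.
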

\begin{proof}
For $p\in\mathbb{R}_{>1}$, $\RQ_p$ is differentiable on $\mathbb{R}^n\setminus 0$. Also, \begin{align*}
 \partial_i\RQ_p(f)&= \partial_i\left(\frac{\sum_{h\in H}\left|\sum_{i\text{ input of }h}f(i)-\sum_{j\text{ output of }h}f(j)\right|^p}{\sum_{i\in V}\deg(i)|f(i)|^p}\right)
\\&=\frac{\partial_i\left(\sum\limits_{h}\left|\sum\limits_{i\in h_{in}}f(i)-\sum\limits_{j\in h_{out}}f(j)\right|^p\right)-\RQ_p(f)\cdot \partial_i\left(\sum\limits_{i}\deg(i)|f(i)|^p\right)}{\sum_{i}\deg(i)|f(i)|^p}
\\&=\frac{p\cdot\deg(i)\cdot\Delta_pf(i)-\RQ_p(f)\cdot p\cdot\deg(i)\cdot|f(i)|^{p-2}f(i)}{\sum_{i}\deg(i)|f(i)|^p}
\\&= p\cdot \deg(i)\cdot \frac{ \Delta_p f(i)-\RQ_p(f) |f(i)|^{p-2}f(i)}{\sum_{i=1}^n \deg(i)|f(i)|^p}, \end{align*}
      where we have used the fact that
    $$\partial_t|t|^p=p|t|^{p-1}\mathrm{sign}(t)=p|t|^{p-2}t.$$
  Hence,
  \begin{align*}
      \nabla \RQ_p(f)=0&\iff \partial_i\RQ_p(f)=0~\forall i 
      \\& \iff \Delta_p f=\RQ_p(f) |f|^{p-2}f
      \\& \iff f \text{ is an eigenfunction for }\Delta_p \text{ with eigenvalue }\RQ_p(f).
  \end{align*}Furthermore, if $f'$ is an eigenfunction corresponding to any eigenvalue $\lambda$, then $\Delta_p f=\lambda |f|^{p-2}f$, therefore 
    $$\langle\Delta_p f,f\rangle=\langle\lambda |f|^{p-2}f,f\rangle$$
    which can be simplified as $$\RQ_p(f)=\lambda.$$ This proves the claim for $\Delta_p$. The case of $\Delta^H_p$ is similar. We have that
    \begin{align*}
 \partial_h\RQ_p(\gamma)&= \partial_h \left(\frac{\sum_{i\in V}\frac{1}{\deg (i)}\cdot \biggl|\sum_{h': v\text{ input}}\gamma(h')-\sum_{h'': v\text{ output}}\gamma(h'')\biggr|^p}{\sum_{\hat{h}\in H}|\gamma(\hat{h})|^p} \right)\\
 &= \frac{\partial_h\left(\sum_{i\in V}\frac{1}{\deg (i)}\cdot \biggl|\sum_{h': v\text{ input}}\gamma(h')-\sum_{h'': v\text{ output}}\gamma(h'')\biggr|^p\right)-\RQ_p(\gamma)\cdot \partial_h\left(\sum_{\hat{h}\in H}|\gamma(\hat{h})|^p\right)}{\sum_{\hat{h}\in H}|\gamma(\hat{h})|^p} \\
 &=\frac{p\cdot \Delta_p^H\gamma(h)-\RQ_p(\gamma)\cdot p\cdot \left(|\gamma(h)|^{p-2}\gamma(h)\right)}{\sum_{\hat{h}\in H}|\gamma(\hat{h})|^p}.
 \end{align*}Therefore,
\begin{align*}
    \nabla \RQ_p(\gamma)=0&\iff  \partial_h\RQ_p(\gamma)=0~\forall h 
    \\&\iff \Delta_p^H\gamma=\RQ_p(\gamma)|\gamma|^{p-2}\gamma
    \\&\iff \gamma \text{ is an eigenfunction for }\Delta^H_p \text{ with eigenvalue }\RQ_p(\gamma).
\end{align*}This proves the first implication for $\Delta^H_p$. The inverse implication is analogous to the case of $\Delta_p$.
\end{proof}
\begin{corollary}\label{cor:minmaxRQ}For all $p>1$,
\begin{equation}\label{eq:mineigenvalue}
    \min_{f\in C(V)}\RQ_p(f) \qquad (\text{resp.} \max_{f\in C(V)}\RQ_p(f))
\end{equation}is the smallest (resp. largest) eigenvalue of $\Delta_p$, and $f$ realizing \eqref{eq:mineigenvalue} is a corresponding eigenfunction. \newline

Analogously,
\begin{equation}\label{eq:mineigenvalueh}
    \min_{\gamma\in C(H)}\RQ_p(\gamma) \qquad (\text{resp.} \max_{\gamma\in C(H)}\RQ_p(\gamma))
\end{equation}is the smallest (resp. largest) eigenvalue of $\Delta^H_p$, and $\gamma$ realizing \eqref{eq:mineigenvalueh} is a corresponding eigenfunction.
\end{corollary}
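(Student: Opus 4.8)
The plan is to deduce this directly from Theorem \ref{theo:eigenvaluesRQ} by a compactness argument, the key preliminary observation being that $\RQ_p$ is invariant under nonzero scaling. Indeed, replacing $f$ by $cf$ with $c\neq 0$ multiplies both the numerator and the denominator of $\RQ_p(f)$ by $|c|^p$, so $\RQ_p(cf)=\RQ_p(f)$. Consequently it suffices to study $\RQ_p$ on the \emph{unit sphere}
\[
  S:=\Bigl\{f\in C(V):\sum_{i\in V}\deg(i)\,|f(i)|^p=1\Bigr\},
\]
which is a compact subset of $\R^n$ not containing the origin (recall we assume there are no vertices of degree zero, so the denominator is a genuine norm to the $p$-th power). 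Since $\RQ_p$ is continuous on $\R^n\setminus\{0\}$, it attains a minimum and a maximum on $S$, and by scale invariance these coincide with $\inf$ and $\sup$ of $\RQ_p$ over all of $C(V)\setminus\{0\}$.

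First I would treat the minimum; the maximum is identical. Let $f^\ast\in S$ realize $\min_{f}\RQ_p(f)$. Because $f^\ast\neq 0$ and $\RQ_p$ is differentiable on $\R^n\setminus\{0\}$, and because $f^\ast$ is a global extremizer of the scale-invariant function $\RQ_p$ over the open set $\R^n\setminus\{0\}$, it is an interior critical point, so $\nabla\RQ_p(f^\ast)=0$. By Theorem \ref{theo:eigenvaluesRQ}, $f^\ast$ is therefore an eigenfunction of $\Delta_p$ with eigenvalue $\RQ_p(f^\ast)=\min_f\RQ_p(f)$. This already shows that the minimal value of the Rayleigh quotient \emph{is} an eigenvalue and that a minimizer is a corresponding eigenfunction.

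It remains to show this eigenvalue is the \emph{smallest}. For this I would invoke the other direction of Theorem \ref{theo:eigenvaluesRQ}: if $\lambda$ is any eigenvalue of $\Delta_p$ with eigenfunction $g\neq 0$, then $\lambda=\RQ_p(g)$. Hence every eigenvalue of $\Delta_p$ is a value attained by $\RQ_p$ on $C(V)\setminus\{0\}$, and therefore lies in the interval $\bigl[\min_f\RQ_p(f),\,\max_f\RQ_p(f)\bigr]$. Combining with the previous paragraph, $\min_f\RQ_p(f)$ is simultaneously an eigenvalue and a lower bound for all eigenvalues, so it is the smallest eigenvalue; dually $\max_f\RQ_p(f)$ is the largest. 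The argument for $\RQ_p(\gamma)$ and $\Delta^H_p$ is verbatim the same, using the hyperedge part of Theorem \ref{theo:eigenvaluesRQ} and the sphere $\{\gamma:\sum_{h}|\gamma(h)|^p=1\}$ in $C(H)$.

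I expect the only subtle point to be the step asserting that the constrained extremizer is an unconstrained critical point, i.e. $\nabla\RQ_p(f^\ast)=0$; the clean way to handle it is precisely the scale-invariance remark, which lets me regard $\RQ_p$ as a genuinely $0$-homogeneous function on the open domain $\R^n\setminus\{0\}$ rather than invoking Lagrange multipliers on $S$. Everything else is routine compactness and continuity, both of which are unproblematic because $p>1$ guarantees differentiability of $t\mapsto|t|^p$ and hence of $\RQ_p$ away from the origin.
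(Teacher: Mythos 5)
Your proof is correct and takes essentially the same route as the paper's: Fermat's theorem applied to $\RQ_p$ on the open set $\R^n\setminus\{0\}$, combined with Theorem \ref{theo:eigenvaluesRQ}. The only difference is that you spell out two steps the paper leaves implicit --- the compactness/scale-invariance argument guaranteeing that the extrema are attained, and the use of the converse direction of Theorem \ref{theo:eigenvaluesRQ} to conclude that the extremal value is indeed the smallest (resp.\ largest) eigenvalue --- which is a welcome but not substantively different elaboration.
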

\begin{proof}
  By Fermat's theorem, if $f\ne 0$ minimizes or maximizes $\RQ_p$ over $\mathbb{R}^n\setminus 0$, then $\nabla \RQ_p(f)=0$. The claim for $\Delta_p$ then follows by Theorem \ref{theo:eigenvaluesRQ}, and the case of $\Delta^H_p$ is analogous.
\end{proof}

We now give a preliminary definition, before stating the generalized min-max principle.

\begin{definition}
For a centrally symmetric set $S$ in $\mathbb{R}^n$, its \textbf{Krasnoselskii $\mathbb{Z}_2$ genus} is defined as
\begin{equation*}
\mathrm{gen}(S) :=
\begin{cases}
\min\limits\{k\in\mathbb{Z}^+: \exists\; \text{odd continuous}\; h: S\setminus 0\to \mathbb{S}^{k-1}\} & \text{if}\; S\setminus 0\ne\emptyset,\\
0 & \text{if}\; S\setminus 0=\emptyset.
\end{cases}
\end{equation*}
For each $k\geq 1$, we let $\mathrm{Gen}_k:=\{ S\subset \mathbb{R}^n: S\text{ centrally symmetric with } \mathrm{gen}(S)\ge k\}$.
\end{definition}
\begin{remark}
    From the above definition we get an inclusion chain $$\mathrm{Gen}_1\supset \mathrm{Gen}_2\supset\ldots\mathrm{Gen}_n\supset \emptyset= \mathrm{Gen}_{n+1}=\ldots=\emptyset.$$ Therefore, the  Krasnoselskii $\mathbb{Z}_2$ genus gives a graded index of the family of all centrally symmetric sets with center at $0$ in $\mathrm{R}^n$, which generalizes the (linear) dimension of subspaces.  
\end{remark}

\begin{theorem}[Generalized min-max principle]\label{theo:minmax}
Let $p\in \mathbb{R}_{>1}$. For $k=1,\ldots,n$, the constants
\begin{equation}\label{eq:min-max-Z2-eigenvalue}
    \lambda_{k}(\Delta_p) := \inf_{S\in\mathrm{Gen}_k}\sup\limits_{f\in S\setminus 0} \RQ_p(f)
\end{equation}
are eigenvalues of $\Delta_p$. They satisfy $$\lambda_1\le \ldots\leq \lambda_n$$ and, if $\lambda=\lambda_{k+1}=\ldots=\lambda_{k+l}$ for $0\le k<k+l\le n,$ then $$\mathrm{gen}(\{\text{eigenfunctions corresponding to }\lambda\})\ge l.$$
The same  holds for the constants 
\begin{equation}\label{eq:min-max-Z2-eigenvalueh}
    \mu_{k}(\Delta^H_p) := \inf_{S\in\mathrm{Gen}_k}\sup\limits_{f\in S\setminus 0} \RQ_p(\gamma), \qquad k=1,\ldots,m,
\end{equation}that are eigenvalues of $\Delta^H_p$.
\end{theorem}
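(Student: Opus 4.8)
The plan is to recognize Theorem \ref{theo:minmax} as an instance of the classical Lusternik--Schnirelmann min-max principle for even functionals, with the Krasnoselskii genus playing the role of the index. The first thing I would record is that $\RQ_p$ is even and positively $0$-homogeneous on $\mathbb{R}^n\setminus 0$: both numerator and denominator are even and positively $p$-homogeneous, so $\RQ_p(-f)=\RQ_p(f)$ and $\RQ_p(tf)=\RQ_p(f)$ for $t\neq 0$. By Euler's identity a $0$-homogeneous function satisfies $\nabla\RQ_p(f)\cdot f=0$, hence $\nabla\RQ_p$ is everywhere tangent to the sphere and the constrained critical points of $\RQ_p|_{\mathbb{S}^{n-1}}$ coincide with the free critical points $\nabla\RQ_p=0$; by Theorem \ref{theo:eigenvaluesRQ} these are exactly the eigenfunctions of $\Delta_p$, with critical value equal to the eigenvalue. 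Because $\RQ_p$ is even, the critical set $K_c=\{f\in\mathbb{S}^{n-1}:\nabla\RQ_p(f)=0,\ \RQ_p(f)=c\}$ at each level $c$ is centrally symmetric, so its genus is defined.

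Next I would reduce the min-max over $\mathrm{Gen}_k$ to a min-max over compact symmetric subsets of $\mathbb{S}^{n-1}$. Using the radial retraction $r(f)=f/|f|$, which is odd and continuous, together with $0$-homogeneity of $\RQ_p$ and monotonicity of the genus under odd maps, one checks that $\lambda_k=\inf\{\sup_A\RQ_p : A\subset\mathbb{S}^{n-1}\text{ compact, symmetric, } \mathrm{gen}(A)\ge k\}$. On the compact sphere $\RQ_p$ is a continuous (indeed $C^1$ for $p>1$) even function, so the Palais--Smale condition holds automatically and each supremum is attained. The monotonicity $\lambda_1\le\cdots\le\lambda_n$ is then immediate from the inclusion chain $\mathrm{Gen}_1\supset\mathrm{Gen}_2\supset\cdots$, since the infimum over a smaller family is no smaller.

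The analytic heart, and the step I expect to be the main obstacle, is an odd (i.e.\ $\mathbb{Z}_2$-equivariant) deformation lemma: if $K_c$ is the (possibly empty) critical set at level $c$ and $N$ is any symmetric open neighborhood of $K_c$, then for a suitable $\epsilon>0$ there is an odd continuous $\eta:\mathbb{S}^{n-1}\to\mathbb{S}^{n-1}$ pushing $\{\RQ_p\le c+\epsilon\}\setminus N$ into $\{\RQ_p\le c-\epsilon\}$. The subtlety is regularity: for $1<p<2$ the gradient $\nabla\RQ_p$ is continuous but not Lipschitz at zeros of $f$, so one cannot integrate it directly; instead I would build a locally Lipschitz pseudo-gradient vector field, symmetrize it to be odd using evenness of $\RQ_p$, and flow along it on the compact sphere, where existence and the required descent estimate are standard.

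With the equivariant deformation lemma in hand, the remaining assertions follow by the usual contradiction arguments. To see that $\lambda_k$ is a critical value, suppose it is not; take a near-optimal $A\in\mathrm{Gen}_k$ with $\sup_A\RQ_p<\lambda_k+\epsilon$ and apply $\eta$ with $N=\emptyset$: since $\eta$ is odd and continuous, $\mathrm{gen}(\eta(A))\ge\mathrm{gen}(A)\ge k$, yet $\sup_{\eta(A)}\RQ_p\le\lambda_k-\epsilon$, contradicting the definition of $\lambda_k$. For the multiplicity statement, if $\lambda_{k+1}=\cdots=\lambda_{k+l}=c$ but $\mathrm{gen}(K_c)\le l-1$, I would choose a symmetric neighborhood $N$ of $K_c$ with $\mathrm{gen}(N)=\mathrm{gen}(K_c)$ (continuity of the genus), pick $A\in\mathrm{Gen}_{k+l}$ with $\sup_A\RQ_p\le c+\epsilon$, and use subadditivity $\mathrm{gen}(A)\le\mathrm{gen}(\overline{A\setminus N})+\mathrm{gen}(N)$ to get $\mathrm{gen}(\overline{A\setminus N})\ge k+1$; applying the deformation then yields a set of genus $\ge k+1$ on which $\RQ_p\le c-\epsilon<\lambda_{k+1}$, the desired contradiction. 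The hyperedge case is verbatim the same argument with $C(H)\cong\mathbb{R}^m$, $\RQ_p(\gamma)$, and the $\Delta^H_p$ half of Theorem \ref{theo:eigenvaluesRQ}.
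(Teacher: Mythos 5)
Your proposal is correct, but it unfolds the machinery that the paper deliberately leaves as a black box, and it runs on a different sphere. The paper's proof is short: it writes $\RQ_p(f)=E_p(f/\|f\|_p)$, observes that the min-max over $\mathrm{Gen}_k$ can be recast as a min-max of the \emph{smooth} functional $E_p$ over symmetric subsets of the smooth $l^p$-sphere $S_p=\{\|f\|_p=1\}$, and then simply cites the Liusternik--Schnirelmann theorem to conclude that each $\lambda_k(\Delta_p)$ is a critical value, hence (by Theorem \ref{theo:eigenvaluesRQ}) an eigenvalue; the monotonicity and multiplicity assertions are likewise inherited from that theorem without further argument. You instead keep $\RQ_p$ itself, restrict it to the Euclidean sphere $\mathbb{S}^{n-1}$, use $0$-homogeneity and Euler's identity to identify constrained with free critical points, and then reprove the Liusternik--Schnirelmann theorem in this setting: odd pseudo-gradient field (correctly flagged as necessary because $\nabla\RQ_p$ fails to be locally Lipschitz for $1<p<2$), equivariant deformation lemma, and the standard genus arguments (monotonicity under odd maps for criticality of $\lambda_k$; subadditivity and continuity of the genus for the multiplicity bound). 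What the paper's route buys is brevity and a clean citation, since both $E_p$ and the constraint $\|f\|_p=1$ are smooth; what your route buys is self-containedness and the fact that the ordering $\lambda_1\le\cdots\le\lambda_n$ and the multiplicity statement $\mathrm{gen}(\{\text{eigenfunctions}\})\ge l$ are actually argued rather than absorbed into the citation. One small point of care in your reduction step: for $S\in\mathrm{Gen}_k$ the radial image $r(S\setminus 0)$ need not be compact, so you should pass to its closure (genus and supremum are preserved) before invoking the deformation theory on compact symmetric sets; this is routine but worth saying, since the paper's family $\mathrm{Gen}_k$ contains non-closed sets.
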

\begin{proof}By Theorem \ref{theo:eigenvaluesRQ}, in order to prove the claim for $\Delta_p$ it suffices to show that $\lambda_{k}(\Delta_p)$ defined in 
\eqref{eq:min-max-Z2-eigenvalue} is a critical value of $\RQ_p$. Let
\begin{equation*}
    \|f\|_p:=\left(\sum_{i\in V}\deg(i)|f(i)|^p\right)^{\frac1p}
\end{equation*}be the $p$-norm with weights given by the degrees, and let
\begin{equation*}
     E_p(f):=\sum_{h\in H} \left|\sum_{j\in h_{in}} f(j)-\sum_{j'\in h_{out}} f(j')\right|^p.
\end{equation*}Then, $\RQ_p(f)=E_p(\frac{f}{\|f\|_p})$. Now, consider the $l^p$-sphere $S_p=\{f\in\mathbb{R}^n:\|f\|_p=1\}$. We have that
\begin{equation*}
    \sup\limits_{f\in S\setminus 0}\RQ_p(f)=\sup\limits_{f\in \mathrm{R}_+S\setminus 0}\RQ_p(f)=\sup\limits_{f\in \mathrm{R}_+S\cap S_p}E_p(f),
\end{equation*}where $\mathrm{R}_+S:=\{cg:g\in S,c>0\}$. Therefore, it can be verified that  $$\lambda_k(\Delta_p)=\inf\limits_{S\subset S_p,S\in \mathrm{Gen}_k,f\in S}E_p(f).$$From the Liusternik-Schnirelmann Theorem applied to the smooth function $E_p$ restricted to the smooth $l^p$-sphere $S_p$ it follows that such a min-max quantity must be an eigenvalue of $E_p$ on $S_p$. This proves the claim for $\Delta_p$. The case of $\Delta_p^H$ is similar, if we consider
\begin{equation*}
    \|\gamma\|_p:=\left(\sum_{h\in H}|\gamma(h)|^p\right)^{\frac1p},
\end{equation*}
\begin{equation*}
    E_p(\gamma):=\sum_{i\in V}\frac{1}{\deg (i)}\left|\sum_{h'\ni i\text{ as input }}\gamma(h')-\sum_{h''\ni i\text{ as output }}\gamma(h'')\right|^p
\end{equation*}and $S_p:=\{\gamma\in \mathbb{R}^m:\|\gamma\|_p=1\}$.
\end{proof}
\begin{remark}
    For the case of $p=2$, a linear subspace $X$ in $\mathbb{R}^n$ with $\dim X=k$ satisfies $\mathrm{gen}(X)=k$ and by considering the sub-family $$\widetilde{\mathrm{Gen}}_k:=\{\text{linear subspace with dimension at least }k\}\subset \mathrm{Gen}_k$$ we have $$\lambda_{k}(\Delta_2) = \inf\limits_{S\in\mathrm{Gen}_k}\sup\limits_{f\in S\setminus 0} \RQ_2(f)=\inf\limits_{S\in\widetilde{\mathrm{Gen}}_k}\sup\limits_{f\in S\setminus 0} \RQ_2(f).$$
   This coincides with the Courant-Fischer-Weyl min-max principle. On the other hand, for $p>1$, we only know that $$\lambda_{k}(\Delta_p) = \inf\limits_{S\in\mathrm{Gen}_k}\sup\limits_{f\in S\setminus 0} \RQ_p(f)\le\inf\limits_{S\in\widetilde{\mathrm{Gen}}_k}\sup\limits_{f\in S\setminus 0} \RQ_p(f).$$ 
   In particular, while for $p=2$ we know that the $n$ eigenvalues of $\Delta_p$ (resp. the $m$ eigenvalues of $\Delta_p^H$ appearing in Theorem \ref{theo:minmax}) are \emph{all the eigenvalues} of $\Delta_p$ (resp. $\Delta_p^H$), we don't know whether $\Delta_p$ and $\Delta_p^H$ have also more eigenvalues, for $p\ne2$. This is still an open question also for the graph case. In other words, we don't know whether all eigenvalues of $\Delta_p$ and $\Delta_p^H$ can be written in the min-max Rayleigh Quotient form. 
   \end{remark}
   
   \begin{conjecture}\label{conj-minmax}
   For $1<p<\infty$, all eigenvalues of $\Delta_p$ are min-max eigenvalues.
   \end{conjecture}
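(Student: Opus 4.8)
The plan is to work on the compact $l^p$-sphere $S_p=\{f\in\R^n:\|f\|_p=1\}$ from the proof of Theorem \ref{theo:minmax}, where Theorem \ref{theo:eigenvaluesRQ} together with the identity $\RQ_p(f)=E_p(f/\|f\|_p)$ identifies the eigenvalues of $\Delta_p$ with the critical values of the even $C^1$ functional $E_p$ restricted to $S_p$, and the normalized eigenfunctions with its critical points. In these terms the conjecture becomes the assertion that every critical value of $E_p|_{S_p}$ coincides with $\lambda_k(\Delta_p)=\inf_{S\in\mathrm{Gen}_k}\sup_{f\in S\setminus 0}\RQ_p(f)$ for some $k\in\{1,\dots,n\}$, i.e. that the $\mathbb{Z}_2$-genus minimax scheme detects the whole spectrum rather than merely $n$ of its members.

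First I would assemble the analytic inputs that make Liusternik--Schnirelmann theory applicable: for $p>1$, $E_p$ is $C^1$ and even, $S_p$ is a compact $\mathbb{Z}_2$-invariant $C^1$ manifold, and the constrained negative gradient flow of $E_p$ on $S_p$ is odd, hence equivariant and genus-preserving away from the critical set. The resulting equivariant deformation lemma gives, for any noncritical level $c$, a $\mathbb{Z}_2$-equivariant retraction of $\{E_p\le c+\varepsilon\}$ onto $\{E_p\le c-\varepsilon\}$ for small $\varepsilon>0$, so that $\mathrm{gen}$ of the sublevel sets is locally constant off the critical values; combined with monotonicity and subadditivity of the genus this reproduces the content of Theorem \ref{theo:minmax} that each $\lambda_k$ is a critical value.

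The substance of the conjecture is the converse detection statement, and this is where the work lies. The aim is to show that $\mathrm{gen}(\{E_p\le c\})$ strictly increases as $c$ crosses \emph{every} critical level, not only the $n$ minimax levels; were this established, each critical value would equal some $\lambda_k$, and the spectrum would consist of at most $n$ minimax values. A preliminary step is to determine the structure of the critical set at a fixed level (in particular whether it consists of finitely many $\mathbb{Z}_2$-orbits), which is delicate because $|t|^p$ is not real-analytic for non-even $p$. I would then try to bound the genus jump across $c$ from below by a local equivariant index attached to each critical orbit --- a Morse-type quantity for the homogeneous functional $E_p$ --- and to read that index off the nodal-domain structure of the corresponding eigenfunction via the nodal domain theorems of Section \ref{section:nodal}. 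A more modest complementary route is perturbative: $\Delta_2$ is linear with exactly $n$ minimax eigenvalues, so one could attempt to propagate the conclusion to $p$ near $2$ through continuity of the eigenvalue branches.

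The main obstacle is the completeness of the genus index. In general Liusternik--Schnirelmann theory the genus provides only a \emph{lower} bound on the number of critical orbits, and a critical value can be invisible to the genus filtration (a saddle across which $\mathrm{gen}$ of the sublevel sets does not change), so no soft variational argument forces every eigenvalue to be a minimax value. A proof must therefore exploit the specific incidence and homogeneity structure of $E_p$ to exclude such hidden critical orbits --- or, on the perturbative route, to rule out the birth of non-variational eigenvalues through bifurcations as $p$ leaves $2$. This is exactly the difficulty that keeps the analogous question open already for graphs, and I expect it to remain the crux for oriented hypergraphs.
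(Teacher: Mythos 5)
The statement you were asked to prove is a \emph{conjecture}: the paper offers no proof of it, and explicitly states that the question is open both for oriented hypergraphs and already for graphs, domains, and manifolds (citing \cite{DegiovanniMazzoleni19,CDG04,BindingRynne08,Minmax}). So there is no ``paper proof'' to compare against, and your proposal --- quite appropriately --- does not prove the statement either. Your reduction is correct as far as it goes: by Theorem \ref{theo:eigenvaluesRQ} and the homogeneity identity $\RQ_p(f)=E_p(f/\|f\|_p)$, eigenvalues of $\Delta_p$ are exactly the critical values of $E_p$ restricted to the $l^p$-sphere $S_p$, and the conjecture is equivalent to the assertion that every such critical value is detected by the Krasnoselskii genus filtration. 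Your diagnosis of the obstruction is also exactly right: Liusternik--Schnirelmann theory only guarantees that each $\lambda_k$ \emph{is} a critical value (this is the content of Theorem \ref{theo:minmax}), and gives no mechanism forcing the converse, since a critical orbit can sit at a level across which the genus of the sublevel sets does not change.

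The genuine gap, then, is simply that neither of your proposed routes is carried out, and both face known obstructions. The ``local equivariant index via nodal domains'' route would require a Morse-type theory for the non-analytic functional $E_p$ whose index jumps are provably nonzero at every critical orbit; nothing in Section \ref{section:nodal} supplies this (the nodal domain theorems bound nodal counts \emph{given} the minimax position of an eigenvalue, which presupposes what you need to prove). The perturbative route from $p=2$ would at best give the conclusion for $p$ in a neighborhood of $2$, and even that requires ruling out bifurcation of non-variational eigenvalue branches, which is not addressed. One structural warning worth adding: the paper's remark after Theorem \ref{thm:1-Laplace} notes that in \cite{pLaplacians2} the authors show the conjecture \emph{fails} for $\Delta_1$. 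Any correct proof for $1<p<\infty$ must therefore use strict convexity/smoothness of $E_p$ and $\|\cdot\|_p$ in an essential, quantitative way --- a soft deformation argument of the kind you outline, which degenerates gracefully as $p\to 1^+$, cannot suffice, since it would otherwise pass to the limit and contradict the $p=1$ counterexample.
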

    
   We formulate this conjecture, because for the $p$-Laplacian on domains and manifolds as well as on graphs, it is an open problem whether all the eigenvalues of the $p$-Laplacian are of the min-max form (see \cite{DegiovanniMazzoleni19,CDG04,BindingRynne08} and \cite{Minmax}). Thus, as far as we know, Conjecture \ref{conj-minmax} is open in both the continuous and the discrete setting.
%\noindent  \textbf{Conjecture (M)}. For $1<p<\infty$, all eigenvalues of $\Delta_p$  are  min-max eigenvalues.

Throughout the paper, given $p>1$ we shall denote by
\begin{equation*}
    \lambda_1\leq\ldots\leq \lambda_n \qquad \text{and}\qquad \mu_1\leq\ldots\leq \mu_m
\end{equation*}the eigenvalues of $\Delta_p$ and $\Delta^H_p$, respectively, which are described in Theorem \ref{theo:minmax}. We shall call them the \textbf{min-max eigenvalues}. Note that, although we cannot say a priori whether these are \emph{all} the eigenvalues of the $p$-Laplacians, in view of Corollary \ref{cor:minmaxRQ} we can always say that 
\begin{equation*}
    \lambda_1=\min_{f\in C(V)}\RQ_p(f),\,\,\,\, \lambda_n=\max_{f\in C(V)}\RQ_p(f), \,\,\,\, \mu_1=\min_{\gamma\in C(H)}\RQ_p(\gamma), \,\,\,\, \mu_m=\max_{\gamma\in C(H)}\RQ_p(\gamma).
\end{equation*}

\section{1-Laplacians}\label{section:1Laplacians}
In this section we generalize the well known $1$-Laplacian for graphs \cite{Hein2,Chang,Chang2} to the case of hypergraphs.
\begin{definition}
  The \textbf{1-Laplacian} is the set-valued operator such that, given $f\in C(V)$,\begin{small}
  \begin{equation*}
 \Delta_1 f:=\left\{\sum_{i\in V}\frac{1}{\deg(i)}   \sum_{h\ni i} z_{ih}\vec e_i\left|z_{ih}\in \mathrm{Sgn}\left(\sum_{j\in h,o_h(i,j)=-1}f(j)-\sum_{j'\in h,o_h(i,j')=1}f(j')\right),z_{ih}=o_h(i,j)z_{jh}\right.\right\}
  \end{equation*}\end{small}
 where $\vec e_1,\ldots,\vec e_n$ is the orthonormal basis of $\R^n$ and $$\mathrm{Sgn}(t):=\begin{cases}
 \{1\} & \text{if } t>0,\\
 [-1,1] & \text{if }t=0,\\
 \{-1\} & \text{if }t<0.
 \end{cases}
$$
Analogously, the \textbf{hyperedge 1-Laplacian} for functions $\gamma\in C(H)$ is
\begin{small}\begin{equation*}
 \Delta^H_1 \gamma:=\left\{\sum_{h\in H}   \sum_{i\in h} \frac{1}{\deg (i)} z_{ih}\vec e_h\left|z_{ih}\in \mathrm{Sgn}\left(\sum_{h'\ni i, o_i(h,h')=-1}\gamma(h')-\sum_{h''\ni i, o_i(h,h'')=1}\gamma(h'')\right),z_{ih}=o_i(h,h')z_{ih'}\right.\right\}
  \end{equation*}\end{small}
 where $\vec e_{h_1},\ldots,\vec e_{h_m}$ is the orthonormal basis of $\R^m$.
\end{definition}

For any $f\in C(V)$, $\Delta_1 f$ is a compact convex set in $C(V)\cong\R^n$, as well as $$\mathrm{Sgn}(f):=\{g\in C(V): g(i)\in \mathrm{Sgn}(f(i)),\,\forall i\}.$$

\begin{remark}\label{rem:upper-continuous-p-Lap}
The 1-Laplacian is the limit of the $p$-Laplacian with respect to the set-valued upper limit, i.e.\
$$\Delta_1f= \limsup\limits_{p\to1^+,\delta\to 0^+}\Delta_p(\mathbb{B}_\delta(f)) = \lim\limits_{\delta\to 0^+}\lim\limits_{p\to1^+}\mathrm{conv}(\Delta_p(\mathbb{B}_\delta(f)))$$
where $\mathbb{B}_\delta(f)$ is the ball with radius $\delta$ and center $f$.  In other words, $\Delta_1f$ is the set of limit points of $\Delta_pf'$ when $p\to 1$ and $f'\to f$. On the one hand, if $f$ is such that $\sum_{i\in h_{in}}f(i)\ne \sum_{i\in h_{out}}f(i)$ for all $h\in H$, then $\Delta_1f=\lim\limits_{p\to1^+}\Delta_pf$ in the classical sense. On the other hand, for a general $f\in C(V)$, the limit may not exist. To some extent, the set-valued upper limit ensures the upper semi-continuity of the family of $p$-Laplacians, that is, the set-valued mapping  $[1,\infty)\times C(V)\ni (p,f)\mapsto \Delta_pf\in C(V)$ is upper semi-continuous.
\end{remark}

 \begin{definition}
The \textbf{eigenvalue problem} of $\Delta_1$ is to find the eigenpair $(\lambda,f)$ such that
$$
 \Delta_1 f\bigcap \lambda \mathrm{Sgn}(f)\ne\emptyset
$$
or equivalently, in terms of Minkowski summation,
$$0\in \Delta_1 f- \lambda \mathrm{Sgn}(f).$$
In coordinate form it means that there exist $$z_{ih}\in \mathrm{Sgn}\left(\sum_{j\in h,o_h(i,j)=-1}f(j)-\sum_{j'\in h,o_h(i,j')=1}f(j')\right)$$ with $z_{ih}=o_h(i,j)z_{jh}$ for $i,j\in h$, and $z_i\in \mathrm{Sgn}(f(i))$ such that
\begin{equation}\label{eq:coordinate-1-Laplacian-problem}
   \sum_{h\ni i}z_{ih}=\lambda \deg(i) z_i,\;\;\forall i\in V. 
\end{equation}
\end{definition}
\begin{remark}\label{remark:1-Lap}
A shorter coordinate form of the eigenvalue problem for the $1$-Laplacian is
\begin{align}
&\exists z_i\in \mathrm{Sgn}(f(i))\;\text{ and }\;z_{h}\in \mathrm{Sgn}\left(\sum_{i\in h_{in}}f(i)-\sum_{i\in h_{out}}f(i)\right)\;\text{ s.t.} \notag
\\&
 \sum_{h_{in}\ni i}z_{h}-\sum_{h_{out}\ni i}z_{h}=\lambda \deg(i) z_i,\;\;\forall i\in V. \label{eq:brief-1-Lap}
\end{align}Observe also that $(\sum_{i\in h_{in}}f(i)-\sum_{i\in h_{out}}f(i))z_h=|\sum_{i\in h_{in}}f(i)-\sum_{i\in h_{out}}f(i)|$ and $f(i)z_i=|f(i)|$, for all $h\in H$ and for all $i\in V$. 
\end{remark}

The eigenvalue problem of $\Delta_1^H$ can be defined in an analogous way. In particular, all results shown in this section for $\Delta_1$ also hold for $\Delta_1^H$. Without loss of generality, we only prove them for $\Delta_1$.
\begin{definition}
For the \emph{generalized Rayleigh Quotient} $RQ_1$ (cf. Definition \ref{def:rq}), its \textbf{Clarke derivative} at $f\in C(V)$ is
$$\nabla \RQ_1(f):=\left\{\xi\in C(V)\left|\limsup_{g\to f, t\to 0^+}\frac{\RQ_1(g+t\eta)-\RQ_1(g)}{t}\ge \langle \xi,\eta\rangle,\forall \eta\in C(V)\right.\right\}.$$
This is a compact convex set in $C(V)$.
\end{definition}
\begin{remark}
Clarke introduced such a derivative for locally Lipschitz functions, in the field of non\-smooth optimization \cite{Clarke,Clarke2}. Clearly, $\RQ_1$ is not smooth, but it is piecewise smooth (therefore locally Lipschitz) on $\R^n\setminus 0$. Hence, the Clarke derivative for $\RQ_1$ is well defined. Also, since the Clarke derivative coincides with the usual derivative for smooth functions, we choose to denote it by $\nabla$ also for locally Lipschitz functions.
\end{remark}
\begin{definition}
Given $f\in C(V)$, let $$E_1(f):=\sum_{h\in H}\left|\sum_{i\in h_{in}}f(i)-\sum_{i\in h_{out}}f(i)\right| \qquad\text{and}\qquad \|f\|_1:=\sum_{i\in V} \deg(i)|f(i)|.$$
\end{definition}
\begin{proposition}For all $i\in V$,
$$(\nabla E_1(f))(i)=\deg(i)\Delta_1f(i)\qquad\text{and}\qquad (\nabla\|f\|_1)(i)=\deg(i)\mathrm{Sgn}(f(i)).$$
\end{proposition}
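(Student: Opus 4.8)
The plan is to recognize that both $E_1$ and $\|\cdot\|_1$ are convex functions on all of $C(V)\cong\R^n$, so that the Clarke derivative $\nabla$ reduces to the ordinary convex subdifferential and all of the relevant calculus rules hold with \emph{equality}. First I would write $E_1(f)=\sum_{h\in H}|\ell_h(f)|$ with $\ell_h(f):=\sum_{i\in h_{in}}f(i)-\sum_{i\in h_{out}}f(i)$ a linear functional, and $\|f\|_1=\sum_{i\in V}\deg(i)|f(i)|$; being finite sums of absolute values of linear functionals, both are convex, finite everywhere, hence locally Lipschitz and Clarke regular. Consequently their Clarke subdifferentials coincide with the convex (Moreau--Rockafellar) subdifferentials $\partial$, and, since every summand is finite everywhere, the convex sum rule applies with no constraint qualification needed.

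For $\|f\|_1$ the computation is immediate and separable: the subdifferential of the single term $\deg(i)|f(i)|$, viewed as a function on $\R^n$, is $\deg(i)\,\mathrm{Sgn}(f(i))\,\vec e_i$, supported on the $i$-th coordinate only. Summing over $i$ gives $\nabla\|f\|_1=\{\sum_{i}\deg(i)s_i\vec e_i: s_i\in\mathrm{Sgn}(f(i))\}$, whose $i$-th coordinate set is exactly $\deg(i)\,\mathrm{Sgn}(f(i))$, as claimed.

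For $E_1$ I would apply the chain rule for the composition of the convex function $|\cdot|$ with the linear map $\ell_h$: writing $a_h:=\sum_{i\in h_{in}}\vec e_i-\sum_{i\in h_{out}}\vec e_i$ for the gradient of $\ell_h$, one has $\partial(|\ell_h(\cdot)|)(f)=\{s\,a_h:s\in\mathrm{Sgn}(\ell_h(f))\}$. The exact sum rule then yields $\nabla E_1(f)=\{\sum_{h\in H}z_h a_h: z_h\in\mathrm{Sgn}(\ell_h(f))\}$, a Minkowski sum. Reading off the $i$-th coordinate, $(\sum_h z_h a_h)(i)=\sum_{h_{in}\ni i}z_h-\sum_{h_{out}\ni i}z_h$, and comparing with the brief coordinate form of the $1$-Laplacian \eqref{eq:brief-1-Lap} in Remark~\ref{remark:1-Lap} (under the identification $z_h\in\mathrm{Sgn}(\sum_{i\in h_{in}}f(i)-\sum_{i\in h_{out}}f(i))$) shows this set equals $\deg(i)\Delta_1 f(i)$. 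In fact the same identification gives the full set equality $\nabla E_1(f)=\{(\deg(i)\,\Delta_1 f(i))_{i\in V}\}$, of which the coordinatewise statement is the projection.

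The main obstacle --- and the only place where care is genuinely required --- is justifying that the sum and chain rules hold with equality rather than as the mere inclusions $\partial(g_1+g_2)\subseteq\partial g_1+\partial g_2$ valid for arbitrary locally Lipschitz functions. This is exactly why I emphasize convexity (equivalently, Clarke regularity) of every summand at the outset: it upgrades all of these inclusions to equalities. A secondary point to verify is the bookkeeping that turns the per-incidence variables $z_{ih}$ appearing in the definition of $\Delta_1$ into a single variable $z_h$ per hyperedge, with a sign flip between inputs and outputs; this is precisely the content of the constraint $z_{ih}=o_h(i,j)z_{jh}$ recorded in Remark~\ref{remark:1-Lap}, which I would simply cite rather than re-derive.
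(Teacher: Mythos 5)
Your proof is correct and follows essentially the same route as the paper's: both decompose $E_1$ and $\|\cdot\|_1$ into absolute values of linear functionals, compute each summand's subdifferential by the nonsmooth chain rule, and conclude by additivity of the Clarke derivative. The only difference is that you explicitly justify why the sum rule holds with \emph{equality} (convexity, hence Clarke regularity, of every summand) rather than as a mere inclusion --- a point the paper's proof leaves implicit when it simply invokes ``the additivity of Clarke's derivative.''
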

\begin{proof}
Note that the Clarke derivative of the function $\mathbb{R}\ni t\mapsto |t|$ is $\mathrm{Sgn}(t)$.
%and the Clarke derivative of the function  $\mathbb{R}^2\ni (t_1,t_2)\mapsto |t_1\pm t_2|$ is $\{(s_1,s_2)\in\mathbb{R}^2: s_1\in\mathrm{Sgn}(t_1\pm t_2),\; s_2=\pm s_1\}.$
%Now we show that 
Hence, by the chain rule in nonsmooth analysis, for $a_1,\ldots,a_k\in\R$, 
$$\nabla_{t_1,\ldots,t_k} |a_1t_1+\ldots+a_kt_k|=\left\{(a_1s,\ldots,a_ks)\in\R^k:s\in \mathrm{Sgn}(a_1t_1+\ldots+a_kt_k)\right\}.
$$
%Since $\R^k\ni (t_1,\ldots,t_k)\mapsto |a_1t_1+\ldots+a_kt_k|$ is a convex function, the Clarke derivative coincides with the subgradient in convex analysis. %Thus, $\nabla |a_1t_1+\ldots+a_kt_k|=$
Finally, applying the additivity of Clarke's derivative, we derive the desired identities.  
\end{proof}

\begin{theorem}[Min-max principle for the $1$-Laplacian]\label{thm:1-Laplace}
If $f$ is a critical point of the function $\RQ_1$, i.e.\ $0\in \nabla \RQ_1(f)$, then $f$ is an eigenfunction and $\RQ_1(f)$ is the corresponding eigenvalue of $\Delta_1$. A function $f\in C(V)\setminus 0$ is a maximum (resp. minimum) eigenfunction of $\Delta_1$ if and only if it is a maximizer (resp. minimizer) of $\RQ_1$; $\lambda$ is the largest (resp. smallest) eigenvalue of $\Delta_1$ if and only if it is the maximum (resp. minimum) value of $\RQ_1$.\newline
Also, the constants
\begin{equation}\label{eq:min-max-Z2-eigenvalue2}
    \lambda_{k}(\Delta_1) := \inf_{S\in\mathrm{Gen}_k}\sup\limits_{f\in S\setminus 0} \RQ_1(f)
\end{equation}
are eigenvalues of $\Delta_1$. Furthermore, $\lim\limits_{p\to 1^+} \lambda_{k}(\Delta_p)= \lambda_{k}(\Delta_1)$, and any limit point of $\{f_{k,p}\}_{p>1}$ is an eigenfunction of $\Delta_1$ w.r.t. $\lambda_k(\Delta_1)$, where $f_{k,p}$ is an eigenfunction\footnote{For convenience, w.l.o.g. we normalize the eigenfunctions $f_{k,p}$ of $\lambda_k(\Delta_p)$, i.e., we assume $\|f_{k,p}\|_p=1$. } of  $\lambda_k(\Delta_p)$, $\forall k=1,\ldots,n$. Besides, if $\lim\limits_{p\to 1^+} \lambda_{k}(\Delta_p)=\lim\limits_{p\to 1^+} \lambda_{k+l}(\Delta_p)$ for some $k,l\in\mathbb{N}_+$, then $\lambda_k(\Delta_1)$ has the multiplicity at least $l+1$. %If the multiplicity of $\lambda_k(\Delta_1)$ is $1$, then $\lim\limits_{p\to1^+}f_{k,p}/\|f_{k,p}\|_1$ exists and equals to the normalized eigenfunction of $\lambda_k(\Delta_1)$. 
\end{theorem}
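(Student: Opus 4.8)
The plan is to read the statement as five linked claims and to dispatch them in the order given, reducing each to the smooth theory of Section~\ref{section:minmax} and to the nonsmooth calculus set up immediately above.

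First I would show that a critical point of $\RQ_1$ is an eigenfunction, mirroring Theorem~\ref{theo:eigenvaluesRQ} but with the Clarke derivative in place of $\nabla$. Writing $\RQ_1=E_1/\|\cdot\|_1$ with $\|f\|_1>0$ for $f\neq 0$, and noting that $E_1$ and $\|\cdot\|_1$ are convex (sums of absolute values of linear forms) hence locally Lipschitz and positively $1$-homogeneous, the Clarke quotient rule gives the inclusion $\nabla\RQ_1(f)\subseteq\bigl(\|f\|_1\nabla E_1(f)-E_1(f)\nabla\|f\|_1\bigr)/\|f\|_1^2$, which is the only direction needed: $0\in\nabla\RQ_1(f)$ forces $0\in\nabla E_1(f)-\RQ_1(f)\,\nabla\|f\|_1$. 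By the preceding Proposition, the $i$-th coordinate of $\nabla E_1(f)$ ranges over $\sum_{h\ni i}z_{ih}$ and that of $\nabla\|f\|_1$ over $\deg(i)\,\mathrm{Sgn}(f(i))$, so this membership is precisely \eqref{eq:coordinate-1-Laplacian-problem} with $\lambda=\RQ_1(f)$; thus $f$ is an eigenfunction with eigenvalue $\RQ_1(f)$.

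Next I would settle the extremal claims. By Fermat's rule for locally Lipschitz functions, a minimizer (resp.\ maximizer) of $\RQ_1$ over $\R^n\setminus 0$ satisfies $0\in\nabla\RQ_1$, hence by the first step is an eigenfunction with eigenvalue $\min\RQ_1$ (resp.\ $\max\RQ_1$). For the converse containment I would pair any eigenpair equation \eqref{eq:brief-1-Lap} with $f$ and sum over $i$: reindexing the left side over hyperedges and using the identities $z_h\bigl(\sum_{i\in h_{in}}f(i)-\sum_{i\in h_{out}}f(i)\bigr)=\bigl|\sum_{i\in h_{in}}f(i)-\sum_{i\in h_{out}}f(i)\bigr|$ and $f(i)z_i=|f(i)|$ from Remark~\ref{remark:1-Lap} yields $E_1(f)=\lambda\|f\|_1$, i.e.\ $\lambda=\RQ_1(f)$. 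Hence every eigenvalue is a value of $\RQ_1$, so the smallest (largest) eigenvalue is $\ge\min\RQ_1$ ($\le\max\RQ_1$); combined with the previous sentence this forces equality and gives both directions of the asserted max/min equivalences.

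The heart of the argument is that the constants $\lambda_k(\Delta_1)$ of \eqref{eq:min-max-Z2-eigenvalue2} are critical values of $\RQ_1$. Since $\RQ_1$ is $0$-homogeneous I would restrict $E_1$ to the compact $\ell^1$-sphere $\{\|f\|_1=1\}$, on which it is even and locally Lipschitz, and then invoke Liusternik--Schnirelmann theory for even locally Lipschitz functionals with the Krasnoselskii genus as index --- the nonsmooth counterpart of the argument in Theorem~\ref{theo:minmax}, built on the Clarke subdifferential and a $\mathbb{Z}_2$-equivariant deformation lemma, as in the graph $1$-Laplacian theory of \cite{Chang,Chang2} and Clarke's calculus \cite{Clarke,Clarke2}. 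This yields that each min-max value is a critical value (hence, by the first step, an eigenvalue) and that whenever $l+1$ consecutive values coincide, the genus of the corresponding eigenfunction set is at least $l+1$. I expect this to be the main obstacle: one must check that the piecewise-smooth $E_1$ on the $\ell^1$-sphere genuinely satisfies the hypotheses of a nonsmooth min-max theorem (validity of the genus as an index, the compactness/Palais--Smale condition, and a quantitative deformation away from noncritical levels), precisely because the loss of strict convexity at $p=1$ is what makes the smooth argument of Theorem~\ref{theo:minmax} inapplicable.

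Finally I would prove the convergence and deduce the multiplicity bound. On $\mathbb{S}^{n-1}$ the map $(p,f)\mapsto\RQ_p(f)$ is jointly continuous with denominator bounded below on the compact set $[1,2]\times\mathbb{S}^{n-1}$, so $\RQ_p\to\RQ_1$ uniformly as $p\to1^+$; scale-invariance of $\RQ_p$ then makes $\sup_{S}\RQ_p$ and $\sup_{S}\RQ_1$ differ by at most $\sup_{\mathbb{S}^{n-1}}|\RQ_p-\RQ_1|$ uniformly in $S\in\mathrm{Gen}_k$, and taking the infimum over $S$ gives $\lambda_k(\Delta_p)\to\lambda_k(\Delta_1)$. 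For the eigenfunctions, $\|f_{k,p}\|_p=1$ with $\deg(i)\ge1$ forces $\|f_{k,p}\|_\infty\le1$, so limit points $f_*$ exist and joint continuity gives $\|f_*\|_1=1$, whence $f_*\neq0$. Passing to a subsequence and combining the upper semicontinuity of the family $\Delta_p$ from Remark~\ref{rem:upper-continuous-p-Lap} with the pointwise limit $|t|^{p-2}t\to\mathrm{Sgn}(t)$ as $p\to1^+$ (equal to $1$ for $t>0$, $-1$ for $t<0$, and landing in $[-1,1]$ at $t=0$), the $p$-eigenvalue equation for $f_{k,p}$ passes to the limit to give $\Delta_1 f_*\cap\lambda_k(\Delta_1)\,\mathrm{Sgn}(f_*)\neq\emptyset$. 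The multiplicity claim then follows at once: the hypothesis $\lim_{p\to1^+}\lambda_k(\Delta_p)=\lim_{p\to1^+}\lambda_{k+l}(\Delta_p)$ forces $\lambda_k(\Delta_1)=\cdots=\lambda_{k+l}(\Delta_1)$ by monotonicity, so the genus bound from the previous paragraph yields multiplicity at least $l+1$.
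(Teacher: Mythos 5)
Your proposal is correct and follows the same five-part decomposition as the paper: Clarke quotient rule for the critical-point claim, Fermat's rule for the extremal claims, nonsmooth Liusternik--Schnirelmann theory (deferred to the literature in both cases) for the min-max values, upper semicontinuity of $(p,f)\mapsto\Delta_p f$ plus the limit $|t|^{p-2}t\to\mathrm{Sgn}(t)$ for the eigenfunction limit, and the coincidence-of-levels argument for multiplicity. Two steps genuinely differ, both to your advantage in self-containedness. First, where the paper proves ``eigenvalue $=\RQ_1$'' by abstractly invoking the Euler identity $\langle g,f\rangle=E_1(f)$ for all $g\in\nabla E_1(f)$ valid for one-homogeneous Lipschitz functions, you instantiate it concretely: pair the coordinate eigenvalue equation with $f$, reindex over hyperedges, and use $z_h\bigl(\sum_{i\in h_{in}}f(i)-\sum_{i\in h_{out}}f(i)\bigr)=\bigl|\sum_{i\in h_{in}}f(i)-\sum_{i\in h_{out}}f(i)\bigr|$ and $f(i)z_i=|f(i)|$; this is the same identity, proved by hand. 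Second, and more substantively, for $\lim_{p\to1^+}\lambda_k(\Delta_p)=\lambda_k(\Delta_1)$ the paper cites a Gamma-convergence result for min-max values, whereas you give an elementary argument: joint continuity of $(p,f)\mapsto\RQ_p(f)$ on the compact set $[1,2]\times\mathbb{S}^{n-1}$ (the denominator is bounded below there) gives uniform convergence $\RQ_p\to\RQ_1$ on the sphere, scale-invariance transfers this to a bound $|\sup_{S}\RQ_p-\sup_{S}\RQ_1|\le\sup_{\mathbb{S}^{n-1}}|\RQ_p-\RQ_1|$ uniform over all $S\in\mathrm{Gen}_k$ (crucially, the genus classes do not depend on $p$), and the inf over $S$ preserves it. This works precisely because the setting is finite-dimensional, and it removes an external dependency; you are also slightly more careful than the paper in verifying that limit points $f_*$ of the normalized eigenfunctions exist and satisfy $\|f_*\|_1=1$, hence are nonzero. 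The one place where your proof, like the paper's, is not self-contained is the nonsmooth Liusternik--Schnirelmann theorem (genus as index, Palais--Smale, equivariant deformation for the locally Lipschitz $E_1$ on the $\ell^1$-sphere); you correctly flag this as the main technical obstacle rather than claiming to resolve it, which matches the paper's level of rigor.
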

\begin{proof}The proof is based on the theory of Clarke derivative, established in \cite{Clarke2}.\newline

Let $f$ be a critical point of the function $\RQ_1$. By the chain rule for the Clarke derivative, 
\begin{align*}
&0\in \nabla \RQ_1(f)\subset \frac{\nabla E_1(f)-\RQ_1(f)\nabla \|f\|_1}{\|f\|_1}
\\ \Longrightarrow~&0\in \nabla E_1(f)-\RQ_1(f)\nabla \|f\|_1
\\ \Longleftrightarrow~&0\in\Delta_1f-\RQ_1(f)\mathrm{Sgn}(f)f.
\end{align*}
Therefore, $f$ is an eigenfunction of $\Delta_1$, and $\RQ_1(f)$ is the corresponding eigenvalue. Also, again by the basic results on Clarke derivative, if $f$ is a maximizer (minimizer) of $\RQ_1$, then $0\in\nabla\RQ_1(f)$. Hence, $0\in\Delta_1f-\RQ_1(f)\mathrm{Sgn}(f)$. Thus, $f$ is an eigenfunction, and $\RQ_1(f)$ is a corresponding eigenvalue.\newline

Now, if $f$ is an eigenfunction corresponding to an eigenvalue $\lambda$, i.e.\ $0\in\Delta_1f-\lambda\mathrm{Sgn}(f)$ or equivalently 
\begin{equation}\label{eq:eigen-problem-E_1}
0\in \nabla E_1(f)-\lambda\nabla \|f\|_1,
\end{equation}
 then by the Euler identity for one-homogeneous Lipschitz functions, $$\langle g,f\rangle=E_1(f) \qquad \forall g\in \nabla E_1(f).$$  Therefore, by \eqref{eq:eigen-problem-E_1}, we get that
 $0=E_1(f)-\lambda \|f\|_1$, which implies $\lambda=\RQ_1(f)$. Hence, the maximum (resp. the minimum) of $\RQ_1$ is the largest (resp. smallest) eigenvalue of $\Delta_1$. \newline

The min-max principle \eqref{eq:min-max-Z2-eigenvalue2} is a consequence of the nonsmooth version of the Liusternik-Schnirelmann Theorem \cite{Liusternik}, and thus we omit the details of the proof.\newline

The convergence property $\lim\limits_{p\to 1^+} \lambda_{k}(\Delta_p)= \lambda_{k}(\Delta_1)$ is a consequence of the result on Gamma-convergence of minimax values \cite{Limitminmax}.\newline

Now, without loss of generality, we may assume that $f_{k,p}\to f_*$, $p\to1^+$. Then, according to Remark \ref{rem:upper-continuous-p-Lap}, $\lim\limits_{p\to1^+}\Delta_pf_{k,p}\in  \Delta_1 f_*$.  Similarly, $|f_{k,p}(i)|^{p-2}f_{k,p}(i)\to \mathrm{sign}(f_*(i))$  as $p$ tends to $1^+$. By taking $p\to1^+$ in the equality  $$0=\Delta_pf_{k,p}(i)-\lambda_{k}(\Delta_p)|f_{k,p}(i)|^{p-2}f_{k,p}(i),\;\;\;\forall i\in V$$ 
we get 
$$0=\lim\limits_{p\to1^+}\Delta_pf_{k,p}(i)-\lambda_{k}(\Delta_i)\mathrm{sign}(f_*(i))\in \Delta_1 f_*(i)-\lambda_{k}(\Delta_i)\mathrm{Sgn}(f_*(i)),\;\;\forall i\in V,$$ which means that $f_*$ is an eigenfunction of $\Delta_1$.\newline

The condition $\lim\limits_{p\to 1^+} \lambda_{k}(\Delta_p)=\lim\limits_{p\to 1^+} \lambda_{k+l}(\Delta_p)$ implies $\lambda_k(\Delta_1)=\lambda_{k+1}(\Delta_1)=\ldots=\lambda_{k+l}(\Delta_1)$, which derives that $\lambda_k(\Delta_1)$ has the multiplicity at least $(l+1)$ according to the Liusternik-Schnirelmann Theory. This completes the proof.

%Since $f_{k,p}/\|f_{k,p}\|_1$ is bounded, the limit point set of $(f_{k,p}/\|f_{k,p}\|_1)_{p>1}$ is nonempty. Such the limit point set must contain in the eigenspace of $\lambda_k(\Delta_1)$. 

\end{proof}

Analogously to the case of $p>1$, also for $p=1$ we shall denote by
\begin{equation*}
    \lambda_1\leq\ldots\leq \lambda_n \qquad \text{and}\qquad \mu_1\leq\ldots\leq \mu_m
\end{equation*}the eigenvalues of $\Delta_1$ that are described in Theorem \ref{theo:minmax} and the analogous eigenvalues of $\Delta_1^H$ that can be obtained in the same way. Also in this case, as well as for $p>1$, we can always say that
\begin{equation*}
    \lambda_1=\min_{f\in C(V)}\RQ_1(f),\,\,\,\, \lambda_n=\max_{f\in C(V)}\RQ_1(f), \,\,\,\, \mu_1=\min_{\gamma\in C(H)}\RQ_1(\gamma), \,\,\,\, \mu_m=\max_{\gamma\in C(H)}\RQ_1(\gamma).
\end{equation*}
\begin{remark}
In contrast to the case of the $p$-Laplacian for $p>1$, the converse of Theorem \ref{thm:1-Laplace} is not true, that is, there exist eigenfunctions $f$ of $\Delta_1$ that are not a critical points of $\RQ_1$. However, showing this requires a long  argument that we bring forward in \cite{pLaplacians2}. In \cite{pLaplacians2} we also show, furthermore, that Conjecture \ref{conj-minmax} cannot hold for $\Delta_1$. (We had already noted in Section \ref{section:Euclidean} that this is also a subtle issue in the continuous case.) \end{remark}

\section{Smallest and largest eigenvalues}\label{section:smallestlargest}
In \cite{Sharp}, it has been proved that 
\begin{equation}\label{eq:sharp}
    \max\limits_{\gamma\in C(H)}\RQ_1(\gamma)=\max\limits_{h\in H}\sum_{i\in h}\frac{1}{\deg(i)},
\end{equation}
%where $h_{in}\triangle h_{out}:=(h_{in}\cup h_{out})\setminus (h_{in}\cap h_{out})=(h_{in}\setminus h_{out})\cup (h_{out}\setminus h_{in})$ is the symmetric difference of $h_{in}$ and $h_{out}$. 
Hence, we can characterize the maximal eigenvalue of $\Delta^H_1$ in virtue of a combinatorial quantity. In this section we investigate further properties of both the largest and the smallest eigenvalues of the $p$-Laplacians, for general $p$.

\begin{lemma}\label{Lemma:1bounds}
        For all $p$, $\lambda_1\leq 1\leq \lambda_n$.
\end{lemma}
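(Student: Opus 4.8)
The plan is to exploit the extremal characterizations $\lambda_1 = \min_{f\in C(V)}\RQ_p(f)$ and $\lambda_n = \max_{f\in C(V)}\RQ_p(f)$, which are available for every $p\geq 1$: for $p>1$ they come from Corollary \ref{cor:minmaxRQ}, and for $p=1$ from Theorem \ref{thm:1-Laplace} (and the remarks recording $\lambda_1=\min\RQ_p$, $\lambda_n=\max\RQ_p$). Given these descriptions, it suffices to produce a single test function $f$ with $\RQ_p(f)=1$. Indeed, any such $f$ forces $\lambda_1\leq\RQ_p(f)=1$ because $\lambda_1$ is the infimum of $\RQ_p$, and simultaneously $\lambda_n\geq\RQ_p(f)=1$ because $\lambda_n$ is the supremum; together these are exactly the asserted inequalities $\lambda_1\leq 1\leq\lambda_n$.

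First I would take $f=\vec e_i$, the indicator function of an arbitrary vertex $i$. This is legitimate because $\deg(i)>0$ by the standing assumption that there are no vertices of degree zero, so $\RQ_p(\vec e_i)$ is well defined. The denominator $\sum_{j\in V}\deg(j)|f(j)|^p$ collapses to the single surviving term $\deg(i)$. For the numerator I would argue hyperedge by hyperedge: in $\left|\sum_{i'\text{ input of }h}f(i')-\sum_{j\text{ output of }h}f(j)\right|^p$ only the vertex $i$ can contribute, so the bracket equals $+1$ when $i$ lies in $h$ as an input only, $-1$ when $i$ lies in $h$ as an output only, and $0$ when either $i\notin h$ or $i$ is simultaneously an input and an output of $h$ (the two copies of $f(i)$ cancel).

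Consequently each hyperedge contributes $1$ to the numerator precisely when it contains $i$ as \emph{only} an input or \emph{only} an output, and $0$ otherwise. By the very definition of $\deg(i)$ the number of such hyperedges is $\deg(i)$, so the numerator also equals $\deg(i)$ and $\RQ_p(\vec e_i)=\deg(i)/\deg(i)=1$, which closes the argument for all $p\geq 1$ at once.

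I do not anticipate a genuine obstacle here; the only point needing care is the term-by-term bookkeeping in the numerator, specifically remembering that a vertex occurring in $h$ as both input and output contributes zero and is correctly excluded from $\deg(i)$, so that numerator and denominator match exactly. The identical computation with the hyperedge indicator $\gamma=\vec e_h$ in $\RQ_p(\gamma)$ would yield the analogous bounds $\mu_1\leq 1\leq\mu_m$ for $\Delta^H_p$, should they be wanted.
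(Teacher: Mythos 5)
Your proposal is correct and is essentially the paper's own proof: the paper likewise takes the indicator function of a fixed vertex, observes that $\RQ_p$ of it equals $1$ for all $p$, and concludes via $\lambda_1=\min_{f}\RQ_p(f)$ and $\lambda_n=\max_{f}\RQ_p(f)$. You merely spell out the numerator bookkeeping (the cancellation for hyperedges containing $i$ as both input and output, matching the paper's definition of $\deg(i)$) that the paper leaves implicit.
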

\begin{proof}
Let $\tilde{f}:V\rightarrow\mathbb{R}$ that is $1$ on a fixed vertex and $0$ on all other vertices. Then, for all $p$, $\RQ_p(\tilde{f})=1$. Therefore,
\begin{equation*}
       \lambda_1 =\min\limits_{f\in C(V)}\RQ_p(f)\leq \RQ_p(\tilde{f})=1 \leq \max\limits_{f\in C(V)}\RQ_p(f)=\lambda_n.
        \end{equation*}
\end{proof}
\begin{lemma}\label{lemma:rq1}
        For $p=1$ and for all hypergraphs, $\lambda_n=1.$

\end{lemma}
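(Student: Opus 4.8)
The plan is to obtain $\lambda_n=1$ by combining an upper bound $\RQ_1(f)\le 1$, valid for every nonzero $f$, with the lower bound $\lambda_n\ge 1$ already furnished by Lemma \ref{Lemma:1bounds}. Since $\lambda_n=\max_{f\in C(V)}\RQ_1(f)$, it suffices to show that the numerator of $\RQ_1(f)$ never exceeds its denominator.

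First I would exploit a cancellation feature of the numerator. A vertex $i$ lying in both $h_{in}$ and $h_{out}$ contributes $f(i)$ with a $+$ sign and a $-$ sign inside $\left|\sum_{i\in h_{in}}f(i)-\sum_{j\in h_{out}}f(j)\right|$, so it cancels; only vertices that are \emph{only} an input or \emph{only} an output of $h$ survive. The triangle inequality then gives, for each $h\in H$,
$$\left|\sum_{i\in h_{in}}f(i)-\sum_{j\in h_{out}}f(j)\right|\le \sum_{i\in (h_{in}\setminus h_{out})\cup(h_{out}\setminus h_{in})}|f(i)|.$$

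Next I would sum this inequality over all $h\in H$ and interchange the order of summation. For a fixed vertex $i$, the number of hyperedges $h$ in which $i$ appears only as an input or only as an output is exactly $\deg(i)$, by the definition of degree. Hence
$$\sum_{h\in H}\left|\sum_{i\in h_{in}}f(i)-\sum_{j\in h_{out}}f(j)\right|\le \sum_{i\in V}\deg(i)\,|f(i)|,$$
and the right-hand side is precisely the denominator of $\RQ_1(f)$. This yields $\RQ_1(f)\le 1$ for every nonzero $f$, so $\lambda_n\le 1$. Together with $\lambda_n\ge 1$ from Lemma \ref{Lemma:1bounds}, we conclude $\lambda_n=1$.

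The only delicate point is the bookkeeping that links the cancellation of doubly-oriented vertices to the definition of $\deg(i)$: one must verify that the triangle-inequality bound sums exactly to the degree-weighted $\ell^1$ norm appearing in the denominator, rather than overcounting vertices that are simultaneously an input and an output. Everything else is routine, and the argument applies verbatim (after swapping the roles of vertices and hyperedges) to give the analogous statement for $\Delta_1^H$.
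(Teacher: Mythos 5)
Your proof is correct and takes essentially the same route as the paper: triangle inequality on each hyperedge, swap the order of summation to recover $\sum_{i\in V}\deg(i)|f(i)|$, and invoke Lemma \ref{Lemma:1bounds} for the reverse inequality. In fact you are slightly more careful than the paper's own write-up, since you make explicit the cancellation of vertices lying in both $h_{in}$ and $h_{out}$, which is exactly what makes the degree bookkeeping come out as an equality rather than an overcount.
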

\begin{proof}We generalize the proof of \cite[Lemma 8]{Cheeger-like-graphs}. Let $\hat{f}:V\rightarrow\mathbb{R}$ be a maximizer of
           \begin{equation*}
\frac{\sum_{h\in H}\left|\sum_{i\text{ input of }h}f(i)-\sum_{j\text{ output of }h}f(j)\right|}{\sum_{i\in V}\deg(i)|f(i)|}        \end{equation*}and assume, without loss of generality, that $\sum_{i\in V}\deg(i)|\hat{f}(i)|=1$. Then,
        \begin{align*}
           \lambda_n&= \max_{f:V\rightarrow\mathbb{R}}\frac{\sum_{h\in H}\left|\sum_{i\text{ input of }h}f(i)-\sum_{j\text{ output of }h}f(j)\right|}{\sum_{i\in V}\deg(i)|f(i)|} \\&=\sum_{h\in H}\left|\sum_{i\text{ input of }h}\hat{f}(i)-\sum_{j\text{ output of }h}\hat{f}(j)\right|\\
            &\leq \sum_{h\in H}\sum_{i\in h}|\hat{f}(i)|\\
            &=\sum_{i\in V}\deg(i)\cdot \bigl|\hat{f}(i)\bigr|\\
            &=1.
        \end{align*}The inverse inequality follows by Lemma \ref{Lemma:1bounds}.
\end{proof}
\begin{remark}If we compare \eqref{eq:sharp} and Lemma \ref{lemma:rq1} we can see that, while for $p=2$, i.e.\ in the case of the usual hypergraph Laplacian, $\mu_m=\lambda_n$ and $\mu_1=\lambda_1$, this is not necessarily true for all $p$.
\end{remark}
\begin{lemma}\label{lemma:rqgamma1}
        For all $p$,  \begin{equation*}
       \mu_1\leq \min\limits_{h\in H}\sum_{i\in h}\frac{1}{\deg(i)}\leq \max\limits_{h\in H}\sum_{i\in h}\frac{1}{\deg(i)}\leq 
        \mu_m.
        \end{equation*}
\end{lemma}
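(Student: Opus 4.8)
The plan is to use single-hyperedge indicator functions as test functions in the variational characterizations $\mu_1 = \min_{\gamma\in C(H)}\RQ_p(\gamma)$ and $\mu_m = \max_{\gamma\in C(H)}\RQ_p(\gamma)$, which hold for every $p\geq 1$ by Corollary \ref{cor:minmaxRQ} together with the explicit statements recorded after it (both for $p>1$ and for $p=1$). The central inequality $\min_{h\in H}\sum_{i\in h}\tfrac{1}{\deg(i)}\leq \max_{h\in H}\sum_{i\in h}\tfrac{1}{\deg(i)}$ is immediate, so only the two outer inequalities need an argument.

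For a fixed hyperedge $h\in H$, let $\gamma_h\in C(H)$ be the function with $\gamma_h(h)=1$ and $\gamma_h(h')=0$ for all $h'\neq h$. First I would compute $\RQ_p(\gamma_h)$. The denominator $\sum_{\hat h\in H}|\gamma_h(\hat h)|^p$ equals $1$. For the numerator, the key observation is that, by definition of an oriented hyperedge, the input set $h_{in}$ and the output set $h_{out}$ are disjoint; hence each vertex $i\in h$ is either an input of $h$ or an output of $h$ but not both, and for such $i$ the quantity $\sum_{h':\,i\text{ input}}\gamma_h(h')-\sum_{h'':\,i\text{ output}}\gamma_h(h'')$ equals $+1$ or $-1$, while it vanishes for $i\notin h$. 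Taking the $p$-th power of the absolute value removes both the sign and any dependence on $p$, so the numerator collapses to $\sum_{i\in h}\tfrac{1}{\deg(i)}$. Therefore $\RQ_p(\gamma_h)=\sum_{i\in h}\tfrac{1}{\deg(i)}$ for every $p\geq 1$.

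The two outer inequalities then follow by optimizing over the choice of test hyperedge. Choosing $h$ to minimize $\sum_{i\in h}\tfrac{1}{\deg(i)}$ yields $\mu_1=\min_{\gamma}\RQ_p(\gamma)\leq \RQ_p(\gamma_h)=\min_{h\in H}\sum_{i\in h}\tfrac{1}{\deg(i)}$, and choosing $h$ to maximize the same quantity yields $\max_{h\in H}\sum_{i\in h}\tfrac{1}{\deg(i)}=\RQ_p(\gamma_h)\leq \max_{\gamma}\RQ_p(\gamma)=\mu_m$, where in each case the value of $\RQ_p(\gamma_h)$ at the optimal $h$ coincides with the corresponding extremum of $\sum_{i\in h}\tfrac{1}{\deg(i)}$ over $H$.

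There is no serious obstacle here: the argument is a direct test-function computation. The only point requiring care is the evaluation of the numerator of $\RQ_p(\gamma_h)$, where the disjointness of $h_{in}$ and $h_{out}$ guarantees that every vertex of $h$ contributes a term of absolute value exactly $1$, independently of $p$. This uniformity in $p$ is precisely what makes the stated bounds valid for all $p$.
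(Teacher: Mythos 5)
Your proposal is correct and is essentially identical to the paper's own proof: the paper likewise takes $\tilde{\gamma}\in C(H)$ equal to $1$ on a fixed hyperedge $h$ and $0$ elsewhere, notes that $\RQ_p(\tilde{\gamma})=\sum_{i\in h}\frac{1}{\deg(i)}$ for all $p$, and then ranges over all $h\in H$ to obtain both outer inequalities. Your extra remark justifying the numerator computation via the disjointness of $h_{in}$ and $h_{out}$ is a harmless (and correct) elaboration of a step the paper leaves implicit.
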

\begin{proof}
let $\tilde{\gamma}:H\rightarrow\mathbb{R}$ that is $1$ on a fixed hyperedge $h$ and $0$ on all other hyperedges. Then, for all $p$, $$\RQ_p(\tilde{\gamma})=\sum_{i\in h}\frac{1}{\deg(i)}.$$ Therefore,
\begin{equation*}
       \mu_1= \min\limits_{\gamma\in C(H)}\RQ_p(\gamma)\leq \RQ_p(\tilde{\gamma}) =\sum_{i\in h}\frac{1}{\deg(i)}\leq 
        \max\limits_{\gamma\in C(H)}\RQ_p(\gamma)=\mu_m.
        \end{equation*}
Since this is true for all $h$, this proves the claim.
\end{proof}
\section{Nodal domain theorems}\label{section:nodal}
In \cite{MulasZhang}, the authors prove two nodal domain theorems for $\Delta_2$. In this section we establish similar results for $\Delta_p$, for all $p\geq 1$. Before, we recall the definitions of nodal domains for oriented hypergraphs. We refer the reader to \cite{nodalgraphs} for nodal domain theorems on graphs. 

\begin{definition}[\cite{MulasZhang}]\label{def:nodal-domain}
Given a function $f:V\to \R$, we let $\mathrm{supp}(f):=\{i\in V: f(i)\ne0\}$ be the support set of $f$. A \textbf{nodal domain} of $f$ is a connected component of
\begin{equation*}
    H\cap \mathrm{supp}(f):=\{h'=(h_{in}\cap \mathrm{supp}(f), h_{out}\cap \mathrm{supp}(f)): h\in H\}.
\end{equation*}
Similarly, we let $\mathrm{supp}_\pm(f):=\{i\in V: \pm f(i)>0\}$. A \textbf{positive nodal domain} of $f$ is a connected component of
\begin{equation*}
    H\cap \mathrm{supp}_+(f):=\{h'=(h_{in}\cap \mathrm{supp}_+(f), h_{out}\cap \mathrm{supp}_+(f)): h\in H\}.
\end{equation*}A \textbf{negative nodal domain} of $f$ is a connected component of $H\cap \mathrm{supp}_-(f)$.
\end{definition}
\subsection{Signless nodal domain}
\begin{definition}
We say an eigenvalue $\lambda$ of $\Delta_p$ has \textbf{multiplicity $r$} if $\mathrm{gen}\{\text{eigenfunctions w.r.t. }\lambda\}=r$.
\end{definition}

 \begin{theorem}\label{thm:Courant-nodal}
If $f$ is an eigenfunction of the $k$-th min-max  eigenvalue $\lambda_k(\Delta_p)$ and this has multiplicity $r$, then the number of nodal domains of $f$ is smaller than or equal to $k+r-1$.
\end{theorem}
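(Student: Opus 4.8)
The plan is to show that the restrictions of $f$ to its nodal domains furnish a centrally symmetric set of large genus on which $\RQ_p$ is bounded by $\lambda_k(\Delta_p)$, and then to combine the resulting index estimate with the multiplicity bound contained in Theorem \ref{theo:minmax}.

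Write $\lambda:=\lambda_k(\Delta_p)$ and let $D_1,\dots,D_s$ be the nodal domains of $f$, so that $s$ is the number we must bound; for each $t$ put $f_t:=f\cdot\chi_{D_t}$ (the restriction of $f$ to $D_t$, extended by zero). The key structural observation, immediate from Definition \ref{def:nodal-domain}, is that for every hyperedge $h$ all vertices of $h$ lying in $\mathrm{supp}(f)$ belong to a single nodal domain, since $h$ induces a connected sub-hyperedge of $H\cap\mathrm{supp}(f)$. Consequently $f_t$ agrees with $f$ on every vertex of every hyperedge meeting $D_t$ and vanishes on every other hyperedge, which I would first use to check that $\Delta_p f_t(v)=\Delta_p f(v)=\lambda|f(v)|^{p-2}f(v)$ for each $v\in D_t$. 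Combining this with the Euler identity for the $p$-homogeneous functional $E_p$, namely $E_p(g)=\sum_{v\in V}\deg(v)\,g(v)\,\Delta_p g(v)$ (which follows from $\partial_v E_p(g)=p\deg(v)\Delta_p g(v)$ as computed in the proof of Theorem \ref{theo:eigenvaluesRQ}), applied to $g=f_t$ and noting that only $v\in D_t$ contribute, yields $E_p(f_t)=\lambda\|f_t\|_p^p$, i.e. $\RQ_p(f_t)=\lambda$ for every $t$.

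Next I would exploit that the $f_t$ have pairwise disjoint supports $D_t$. This makes both the numerator $E_p$ and the denominator $\|\cdot\|_p^p$ additive over $t$ on the span $S_0:=\mathrm{span}\{f_1,\dots,f_s\}$: for $g=\sum_t c_t f_t$ one gets $E_p(g)=\sum_t|c_t|^p E_p(f_t)$ and $\|g\|_p^p=\sum_t|c_t|^p\|f_t\|_p^p$, so $\RQ_p(g)$ is a weighted average of the numbers $\RQ_p(f_t)=\lambda$ and hence equals $\lambda$. Since the $f_t$ are linearly independent (disjoint nonempty supports), $S_0$ is $s$-dimensional, whence $\mathrm{gen}(S_0)=s$ and $S_0\in\mathrm{Gen}_s$. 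The min-max formula \eqref{eq:min-max-Z2-eigenvalue} then gives $\lambda_s(\Delta_p)\le\sup_{g\in S_0\setminus 0}\RQ_p(g)=\lambda$.

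Finally I would convert this into the nodal count. Let $[a,b]$ be the maximal block of indices with $\lambda_a(\Delta_p)=\dots=\lambda_b(\Delta_p)=\lambda$; since $f$ is an eigenfunction of $\lambda_k$ we have $a\le k\le b$. Because $\lambda_j(\Delta_p)>\lambda$ for $j>b$, the inequality $\lambda_s(\Delta_p)\le\lambda$ forces $s\le b$ (the trivial bound $s\le n=b$ covers the case $b=n$). On the other hand, applying the multiplicity part of Theorem \ref{theo:minmax} to the block $[a,b]$ gives $r=\mathrm{gen}\{\text{eigenfunctions w.r.t. }\lambda\}\ge b-a+1$, i.e. $b\le a+r-1$. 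Chaining these estimates, $s\le b\le a+r-1\le k+r-1$, which is the claim. I expect the main obstacle to be the second step, namely verifying that each nodal restriction inherits the \emph{exact} value $\RQ_p(f_t)=\lambda$, as this is where the interaction between the hyperedge structure and the support of $f$ must be controlled; the genus and multiplicity bookkeeping in the last step is then routine given Theorem \ref{theo:minmax}.
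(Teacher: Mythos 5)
Your proof is correct for $p>1$ and follows essentially the same route as the paper: the restrictions of $f$ to its nodal domains have disjoint supports, each hyperedge meets at most one nodal domain inside $\mathrm{supp}(f)$, hence $E_p$ and $\|\cdot\|_p^p$ are additive on the span, each restriction satisfies $E_p(f_t)=\lambda\|f_t\|_p^p$, and the resulting $s$-dimensional subspace with $\RQ_p\equiv\lambda$ plugged into the min-max formula bounds $s$. (The paper verifies $E_p(f|_{V_l})=\lambda_k\|f|_{V_l}\|_p^p$ by expanding the sum directly rather than via the Euler identity, but this is the same computation; and your endgame with the block $[a,b]$ and the multiplicity estimate of Theorem \ref{theo:minmax} is actually a cleaner, fully general version of the paper's contradiction argument, which instead assumes ``for simplicity'' that the block of eigenvalues equal to $\lambda_k$ is exactly $\lambda_k=\dots=\lambda_{k+r-1}<\lambda_{k+r}$.)

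The one genuine omission is the case $p=1$, which the section explicitly includes (``for all $p\geq 1$'') and which the paper treats separately. Your two key steps break down there: $\Delta_1$ is set-valued, so the identity $\Delta_p f_t(v)=\lambda|f(v)|^{p-2}f(v)$ and the smooth Euler identity $\partial_v E_p(g)=p\deg(v)\Delta_p g(v)$ are no longer available. The paper's fix is to use the coordinate form of the $1$-Laplacian eigenvalue problem (Remark \ref{remark:1-Lap}): choosing certificates $z_h\in\mathrm{Sgn}\bigl(\sum_{i\in h_{in}}f(i)-\sum_{i\in h_{out}}f(i)\bigr)$ and $z_i\in\mathrm{Sgn}(f(i))$ satisfying $\sum_{h_{in}\ni i}z_h-\sum_{h_{out}\ni i}z_h=\lambda\deg(i)z_i$, one writes each term $\bigl|\sum_{i\in h_{in}}f(i)-\sum_{i\in h_{out}}f(i)\bigr|$ as $z_h\bigl(\sum_{i\in h_{in}}f(i)-\sum_{i\in h_{out}}f(i)\bigr)$, sums by parts, and recovers $E_1(f|_{V_l})=\lambda\|f|_{V_l}\|_1$; the rest of your argument then goes through verbatim. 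You should either add this case or restrict your claim to $p>1$.
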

\begin{proof}
Suppose the contrary, that is, $f$ is an eigenfunction of $\lambda_k$ with multiplicity $r$, and $f$ has at least $k+r$ nodal domains which are denoted by $V_1,\ldots,V_{k+r}$. For simplicity, we assume that
\begin{equation*}
  \lambda_1\le \ldots\le  \lambda_k=\lambda_{k+1}=\ldots=\lambda_{k+r-1}<\lambda_{k+r} \le\ldots\le \lambda_n.
\end{equation*}
Consider a linear function-space $X$ spanned by $f|_{V_1},\ldots,f|_{V_{k+r}}$, where the restriction $f|_{V_i}$ is defined by
\begin{equation*}
    f|_{V_i}(j)=\begin{cases}f(j),&\text{ if }j\in V_i,\\ 0,&\text{ if } j\not\in V_i.\end{cases}
\end{equation*}
Since $V_1,\ldots,V_{k+r}$ are pairwise disjoint, $\dim X=k+r$. Given $g\in X\setminus 0$, there exists $(t_1,\ldots,t_{k+r})\ne\vec0$ such that
\begin{equation*}
    g=\sum_{i=1}^{k+r} t_i f|_{V_i}.
\end{equation*}
 It is clear that $\|g\|_p^p=\sum_{i=1}^{k+r} |t_i|^p\|f|_{V_i}\|_p^p$. By the definition of nodal domain, each hyperedge $h$ intersects with at most one $V_i\in \{V_1,\ldots,V_{k+r}\}$, which implies that $E_p(g)=\sum_{i=1}^{k+r} |t_i|^p E_p(f|_{V_i})$. Finally, we note that for $p>1$, %$\RQ(f|_{V_i})=\RQ(f)$. In fact,
\begin{align*}
& \sum_{h\in H:\, h\cap V_{l}\ne\emptyset}\left|\sum_{i\in h_{in}}f|_{V_l}(i)-\sum_{j\in h_{out}}f|_{V_l}(j)\right|^p
= \sum_{h\in H:\, h\cap V_{l}\ne\emptyset}\left|\sum_{i\in h_{in}}f(i)-\sum_{j\in h_{out}}f(j)\right|^p
\\&= \sum_{i\in V_l}f(i)\sum_{h\ni i}\left|\sum_{h_{in}}f(i)-\sum_{h_{out}}f(j)\right|^{p-2}\left(\sum_{j\in h,o_h(i,j)=-1}f(j)-\sum_{j'\in h,o_h(i,j')=1}f(j')\right)
\\&= \sum_{i\in V_l}f(i) \lambda_k\deg(i)|f(i)|^{p-2}f(i)
= \lambda_k\sum_{i\in V_l}\deg(i)|f|_{V_l}(i)|^p=\lambda_k\|f|_{V_l}\|_p^p,
\end{align*}
which implies that $E_p(f|_{V_l})=\lambda_k\|f|_{V_l}\|_p^p$.  %$\RQ(f|_{V_l})=\lambda_k=\RQ(f)$.
For the case of $p=1$, we have 
\begin{align*}
 \sum_{h\in H:\, h\cap V_{l}\ne\emptyset}\left|\sum_{i\in h_{in}}f|_{V_l}(i)-\sum_{j\in h_{out}}f|_{V_l}(j)\right|
&= \sum_{h\in H:\, h\cap V_{l}\ne\emptyset}\left|\sum_{i\in h_{in}}f(i)-\sum_{j\in h_{out}}f(j)\right|
\\&= \sum_{h\in H:\, h\cap V_{l}\ne\emptyset}z_h\left(\sum_{i\in h_{in}}f(i)-\sum_{j\in h_{out}}f(j)\right)
\\&= \sum_{i\in V_l}f(i)\left(\sum_{h_{in}\ni i}z_h-\sum_{h_{out}\ni i}z_h\right)
\\&= \sum_{i\in V_l}f(i) \lambda_k\deg(i)z_i
\\&= \lambda_k\sum_{i\in V_l}\deg(i)|f|_{V_l}(i)|
\\&=\lambda_k\|f|_{V_l}\|_1,
\end{align*}
in which the parameters $z_h\in\mathrm{Sgn}(\sum_{i\in h_{in}}f(i)-\sum_{j\in h_{out}}f(j))$ and $z_i\in \mathrm{Sgn}(f(i))$ (cf. Remark \ref{remark:1-Lap}).

Therefore,
$$\RQ_p(g)=\frac{\sum_{i=1}^{k+r} |t_i|^p E_p(f|_{V_i})}{\sum_{i=1}^{k+r} |t_i|^p\|f|_{V_i}\|_p^p}=\lambda_k.$$
%Therefore, $\RQ(g)=\lambda_k$. %$g$ is an eigenfunction for $\lambda_k$.
%Now, let $\mathcal{X}_{k+r}$ be the family of all $(k+r)$--dimensional subspaces of $C(V)$. 
By the min-max principle for $\Delta_p$,
\begin{align*}
\lambda_{k+r}&=\min\limits_{ X'\in\mathrm{Gen}_{k+r}}\max\limits_{g'\in X'\setminus0}
\RQ_p(g')%\frac{\sum_{h\in H}\left|\sum_{i\text{ input of }h}g'(i)-\sum_{j\text{ output of }h}g'(j)\right|^p}{\sum_{i\in V}\deg(j)|g'(i)|^p}
\\&\le \max\limits_{g\in X\setminus0}
\RQ_p(g)%\frac{\sum_{h\in H}\left|\sum_{i\text{ input of }h}g'(i)-\sum_{j\text{ output of }h}g'(j)\right|^p}{\sum_{i\in V}\deg(j)|g'(i)|^p}
\\&=\lambda_k,
\end{align*}
which leads to a contradiction.
\end{proof}

\subsection{Positive and negative nodal domain theorem}\label{positive-nodal}

In this section, we show a new Courant nodal domain theorem for oriented hypergraphs with only inputs. Note that Theorem \ref{thm:Courant-nodal} doesn't hold if we replace ``nodal domains'' by ``positive and negative nodal domains''. In fact, for the connected hypergraph  $\Gamma_k:=(V,E_k)$  with $V:=\{1,\ldots,n\}$ and $$E_k:=\{\{i,j\}:i\le k\text{ and }j\ge k+1, \text{ or vice versa}\} $$ in which  we suppose that there are only inputs, the number of positive and negative nodal domains of the first eigenfunction w.r.t. $\lambda_1=0$ is $n$.
\begin{theorem}\label{thm:Courant-nodal-reverse}
Let $\Gamma=(V,H)$ be an oriented hypergraph with only inputs. If $f$ is an eigenfunction of the $k$-th min-max eigenvalue $\lambda_k$ and this has multiplicity $r$, then the number of nodal domains of $f$ is smaller than or equal to $n-k+r$.
\end{theorem}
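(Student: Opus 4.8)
The plan is to adapt the variational argument of Theorem \ref{thm:Courant-nodal} but to work with the \emph{complementary} min-max characterization, exploiting the special structure of hypergraphs with only inputs. The key observation is that when $\Gamma$ has only inputs, every hyperedge $h$ satisfies $h_{out}=\emptyset$, so the relevant quantity attached to $h$ is simply $\left|\sum_{i\in h}f(i)\right|^p$; in particular, the constant function is annihilated, reflecting the presence of the eigenvalue $\lambda_1=0$. This is precisely why one should not expect to bound nodal domains from above by $k+r-1$ (as the boundary example $\Gamma_k$ shows), but rather to bound them in terms of $n-k+r$: the roles of small and large eigenvalues are reversed, and the natural dual index is $n-k$.

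First I would set up the contradiction hypothesis: assume $f$ is an eigenfunction of $\lambda_k$ of multiplicity $r$ with strictly more than $n-k+r$ nodal domains, say $V_1,\ldots,V_s$ with $s\ge n-k+r+1$. As in the proof of Theorem \ref{thm:Courant-nodal}, I would form the restrictions $f|_{V_1},\ldots,f|_{V_s}$ and verify that they span a space $X$ of dimension $s$ on which $\RQ_p$ is constantly equal to $\lambda_k$; the same decomposition $E_p(g)=\sum_i|t_i|^pE_p(f|_{V_i})$ and $\|g\|_p^p=\sum_i|t_i|^p\|f|_{V_i}\|_p^p$ goes through verbatim, using that each hyperedge meets at most one nodal domain. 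The new ingredient is that I must now compare $\lambda_k$ against the min-max values from the \emph{other} end of the spectrum. Concretely, I would use the ``dual'' genus characterization: since $\mathrm{gen}$ behaves like a codimension, a subspace-like family $X$ of genus $s$ together with a family of genus $\ge n-s+1$ must intersect nontrivially, so that the sup of $\RQ_p$ over the latter family is at least the value $\lambda_k$ attained on $X$. I would translate this into the statement $\lambda_{n-s+1}(\Delta_p)\ge$ (something controlled by $\lambda_k$), and then choose indices so that $s\ge n-k+r+1$ forces $n-s+1\le k-r$, yielding $\lambda_{k-r}\ge\lambda_{k+1}>\lambda_{k-1}$ or an analogous strict inequality that contradicts the ordering $\lambda_1\le\cdots\le\lambda_n$ together with the multiplicity-$r$ clustering around $\lambda_k$.

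The main obstacle will be making the genus intersection argument precise. Unlike the linear case $p=2$, where one simply invokes that a $k$-dimensional and an $(n-k+1)$-dimensional subspace of $\R^n$ always meet, here $\mathrm{Gen}_k$ consists of arbitrary centrally symmetric sets, and I need a Borsuk--Ulam-type statement guaranteeing that any $S\in\mathrm{Gen}_{n-s+1}$ must intersect the genus-$s$ set $X\setminus 0$ (or more carefully, that no odd map $X\setminus 0\to\mathbb{S}^{n-s-1}$ exists), which is the content of the subadditivity/monotonicity properties of the Krasnoselskii genus. I would therefore isolate and invoke the genus inequality $\mathrm{gen}(A)+\mathrm{gen}(B)\ge \mathrm{gen}(A\cup B)$ together with $\mathrm{gen}(\mathbb{S}^{j-1})=j$ to deduce the required nonempty intersection, and then argue that on this intersection $\RQ_p$ equals $\lambda_k$, forcing the $(n-s+1)$-th min-max value up to at least $\lambda_k$ and producing the contradiction with the strict gap coming from the multiplicity assumption. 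I expect the bookkeeping of which strict inequality in the eigenvalue ordering is violated to require some care, exactly as the clustering hypothesis $\lambda_k=\cdots=\lambda_{k+r-1}$ had to be tracked in Theorem \ref{thm:Courant-nodal}.
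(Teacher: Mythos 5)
Your overall skeleton---argue by contradiction, span a linear space $X$ by the restrictions $f|_{V_l}$, use the intersection property of the Krasnoselskii genus to push $\lambda_{k-r}$ up to $\lambda_k$, and contradict the multiplicity assumption---is exactly the strategy of the paper, and the index bookkeeping ($\dim X\ge n-k+r+1$ meeting every member of $\mathrm{Gen}_{k-r}$) is the right one. But there is a genuine gap at the crucial analytic step: your claim that ``each hyperedge meets at most one nodal domain'', so that the decomposition $E_p(g)=\sum_l|t_l|^pE_p(f|_{V_l})$ goes through verbatim and $\RQ_p\equiv\lambda_k$ on $X$. This theorem (see the remark preceding it and the title of the subsection, as well as the example $\Gamma_k$) concerns \emph{positive and negative} nodal domains; that is precisely why Theorem \ref{thm:Courant-nodal} fails in this setting and a different bound is needed. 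For positive and negative nodal domains a hyperedge can meet \emph{two} domains---one positive and one negative (in a graph, any edge joining a positive vertex of $f$ to a negative one does so). Hence the exact decomposition fails, $\RQ_p$ is not constant on $X$, and your argument collapses at this point.

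The paper's substitute for your equality is the only place where the ``only inputs'' hypothesis---which your proposal states but never actually uses---enters. With only inputs, if $V_l$ and $V_{l'}$ are distinct domains meeting the same hyperedge $h$, then $A=\sum_{i\in h}f|_{V_l}(i)$ and $B=\sum_{i\in h}f|_{V_{l'}}(i)$ satisfy $AB\le 0$, and the convexity inequality of Lemma \ref{lemma:toprove},
\begin{equation*}
|tA+sB|^p\ge\left(|t|^pA+|s|^pB\right)|A+B|^{p-2}(A+B),
\end{equation*}
combined with the eigenvalue equation yields the \emph{one-sided} bound $\RQ_p(g)\ge\lambda_k$ for all $g\in X\setminus 0$; this inequality (not an equality) is all the genus-intersection argument needs, since only $\inf_{g\in X\setminus0}\RQ_p(g)\ge\lambda_k$ is used. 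Without the only-inputs hypothesis the sign condition $AB\le0$ can fail because of the input/output structure, and the inequality breaks. Indeed, the fact that your proof never invokes this hypothesis should be a red flag: if the exact decomposition were available, your argument would prove the bound for every oriented hypergraph, making the hypothesis superfluous. Two smaller points: your motivational remark that constants are annihilated when there are only inputs is false ($E_p(c)=\sum_{h}|c\,\#h|^p\ne0$ in general); and the final contradiction is not a violated strict inequality in the ordering, but simply $\lambda_{k-r}=\cdots=\lambda_k$, which by the multiplicity clause of Theorem \ref{theo:minmax} forces $\mathrm{gen}\{\text{eigenfunctions w.r.t. }\lambda_k\}\ge r+1$, contradicting the assumption that the multiplicity equals $r$.
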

\begin{proof}
Suppose the contrary, that is, $f$ is an eigenfunction of $\lambda_k$ with multiplicity $r$, and $f$ has at least $n-k+r+1$ nodal domains which are denoted by $V_1,\ldots,V_{n-k+r+1}$. 
Consider a linear function-space $X$ spanned by $f|_{V_1},\ldots,f|_{V_{n-k+r+1}}$, where the restriction $f|_{V_i}$ is defined by
\begin{equation*}
    f|_{V_i}(j)=\begin{cases}f(j),&\text{ if }j\in V_i,\\ 0,&\text{ if } j\not\in V_i.\end{cases}
\end{equation*}
Since $V_1,\ldots,V_{n-k+r+1}$ are pairwise disjoint, $\dim X=n-k+r+1$. For $g\in X\setminus 0$, there exists $(t_1,\ldots,t_{n-k+r+1})\ne\vec0$ such that $g=\sum_{i=1}^{n-k+r+1} t_i f|_{V_i}$. 
  By definition of positive and negative nodal domains, each hyperedge $h$ intersects at most one positive nodal domain and at most one negative nodal domain. Thus, for $l\ne l'$ and $h\in H$, $\left(\sum_{i\in h_{in}}f|_{V_l}(i)\right)\cdot\left(\sum_{i\in h_{in}}f|_{V_{l'}}(i)\right)\le 0$.\newline 
  
  Now, with a little abuse of notation we let $h=h_{in}$. For $p>1$, we have that
\begin{align*}
\sum_{h\in H}\left|\sum_{i\in h}g(i)\right|^p&= \sum_{h\in H}\left|\sum_{i\in h}\sum_{l=1}^{n-k+r+1}t_lf|_{V_l}(i)\right|^p
\\&=\sum_{h\in H}\left|\sum_{l=1}^{n-k+r+1}t_l\left(\sum_{i\in h}f|_{V_l}(i)\right)\right|^p
\\&\ge 
\sum_{h\in H}\sum_{l=1}^{n-k+r+1}|t_l|^p\left(\sum_{i\in h}f|_{V_l}(i)\right)\left|\sum_{i\in h}f(i)\right|^{p-2}\sum_{i\in h}f(i)
\\&=\sum_{l=1}^{n-k+r+1}|t_l|^p\sum_{i\in V_l}f(i)\left(\sum_{h\in H: h\ni i}\left|\sum_{j'\in h}f(j')\right|^{p-2}\left(\sum_{j'\in h}f(j')\right)\right)
\\&=\sum_{l=1}^{n-k+r+1}|t_l|^p\sum_{i\in V_l}f(i) \lambda_k\deg(i)|f(i)|^{p-2}f(i)
\\&=\lambda_k\sum_{l=1}^{n-k+r+1}|t_l|^p\sum_{i\in V_l}\deg(i)|f(i)|^p
\\&=\lambda_k\sum_{i\in V}\deg(i)|g(i)|^p,
\end{align*}
where the inequality is deduced by taking $A=\sum_{i\in h}f|_{V_l}(i)$ and $B=\sum_{i\in h}f|_{V_l'}(i)$ in the following lemma. Similarly, for $p=1$ we have
\begin{align*}
\sum_{h\in H}\left|\sum_{i\in h}g(i)\right|&= \sum_{h\in H}\left|\sum_{l=1}^{n-k+r+1}t_l\left(\sum_{i\in h}f|_{V_l}(i)\right)\right|
\\&\ge
\sum_{h\in H}\sum_{l=1}^{n-k+r+1}|t_l|\left(\sum_{i\in h}f|_{V_l}(i)\right)z_h
\\&=\sum_{l=1}^{n-k+r+1}|t_l|\sum_{i\in V_l}f(i)\left(\sum_{h\in H: h\ni i}z_h\right)
\\&=\sum_{l=1}^{n-k+r+1}|t_l|\sum_{i\in V_l}f(i) \lambda_k\deg(i)z_i
\\&=\lambda_k\sum_{l=1}^{n-k+r+1}|t_l|\sum_{i\in V_l}\deg(i)|f(i)|=\lambda_k\sum_{i\in V}\deg(i)|g(i)|,
\end{align*}
where $z_h\in\mathrm{Sgn}(\sum_{i\in h}f(i))$ and $z_i\in \mathrm{Sgn}(f(i))$.

%\begin{lemma}%\label{lemma:toprove}Let $p\ge 1$ and let $t_l,A_l\in \R$ with  $A_lA_{l'}\le 0$ and $A_{l}A_{l'}A_{l''}=0$ for all $l\ne l'\ne l''$ (i.e., at most two $A_l,A_{l'}$ are nonzero, and at this time, they have different signs). Then $$\left|\sum_l t_lA_l\right|^p\ge \sum_l|t_l|^pA_l\cdot\left|\sum_lA_l\right|^{p-2}\sum_lA_l.$$\end{lemma}
\begin{lemma}\label{lemma:toprove}Let $p\ge 1$, and let $t,s,A,B\in \R$ with $AB\le 0$. Then,
\begin{equation}\label{eq:basic-inequality-p}
    |tA+sB|^p\ge (|t|^pA+|s|^pB)|A+B|^{p-2}(A+B).
\end{equation}
In the particular case of $p=1$, we further have $|tA+sB|\ge (|t|A+|s|B)z,\;\;\forall z\in\mathrm{Sgn}(A+B).$ 
\end{lemma}

By Lemma \ref{lemma:toprove}, it follows that $\RQ(g)\ge\lambda_k$. \newline

%Now, let $\mathcal{X}_{k+r}$ be the family of all $(k+r)$--dimensional subspaces of $C(V)$.
By the intersection property of $\mathbb{Z}_2$-genus, $X'\cap X\setminus\{0\}\ne\emptyset$ for any $X'\in\mathrm{Gen}_{k-r}$. Therefore,
\begin{align*}
\lambda_{k-r}&=\inf\limits_{ X'\in\mathrm{Gen}_{k-r}}\sup\limits_{g'\in X'\setminus0}\RQ(g') \ge \inf\limits_{ X'\in\mathrm{Gen}_{k-r}}\sup\limits_{g'\in X'\cap X\setminus0}\RQ(g')
\\&\ge \inf\limits_{ X'\in\mathrm{Gen}_{k-r}}\inf\limits_{g'\in X'\cap X\setminus0}\RQ(g') \ge \inf\limits_{ X'\in\mathrm{Gen}_{k-r}}\inf\limits_{g'\in X\setminus0}\RQ(g')
\\ &=\inf\limits_{g\in X\setminus0}\RQ(g) \ge\lambda_k.
\end{align*}
Together with $\lambda_{k-r}\le\ldots\le \lambda_{k-1}\le\lambda_k$, this implies that $\lambda_{k-r}=\ldots= \lambda_{k-1}=\lambda_k$, meaning that the multiplicity of 
$\lambda_k$ is at least $r+1$, which leads to a contradiction.
\end{proof}
It is only left to prove Lemma \ref{lemma:toprove}.
\begin{proof}[Proof of Lemma \ref{lemma:toprove}]
%Clearly, we only need to prove the reduced version:
%Suppose $p\ge 1$, and $t,s,A,B\in \R$ with  $AB\le 0$. Then  $$|tA+sB|^p\ge (|t|^pA+|s|^pB)|A+B|^{p-2}(A+B).$$
Without loss of generality, we may assume that $A>0>B$ and $A>B':=|B|$. In order to prove \eqref{eq:basic-inequality-p}, it suffices to show that 
$$|tA-sB'|^p\ge (|t|^pA-|s|^pB')(A-B')^{p-1},
%|A-B'|^{p-2}(A-B').
$$
that is,
$$\left|t\frac{A}{A-B'}-s\frac{B'}{A-B'}\right|^p\ge |t|^p\frac{A}{A-B'}-|s|^p\frac{B'}{A-B'}.
$$
By the convexity of the function $t\mapsto |t|^p$, we have 
$$
\frac{A-B'}{A}\left|t\frac{A}{A-B'}-s\frac{B'}{A-B'}\right|^p+\frac{B'}{A}|s|^p\ge |t|^p,
$$
which proves \eqref{eq:basic-inequality-p}. %For $p=1$,
Now, in order to prove the stronger inequality for $p=1$, since $z=|A+B|(A+B)^{-1}$ if $A+B\ne 0$, it suffices to focus on the case of $A+B=0$. In this case, by $|t-s|\ge \max\{|t|-|s|,|s|-|t|\}$, we have $|t-s|\ge (|t|-|s|)z$ for any $z\in[-1,1]$. Therefore, $|tA+sB|=A|t-s|\ge A(|t|-|s|)z=(|t|A+|s|B)z$. The proof is completed.
\end{proof}
\section{Smallest nonzero eigenvalue}\label{section:lambdamin}
In this section we discuss the smallest nonzero eigenvalue $\lambda_{\min}$ of $\Delta_p$, for $p\geq 1$, as a continuation of Sections 5 and 6 in \cite{MulasZhang}, which are focused on the easier study of $\lambda_{\min}$ for the $2$-Laplacian. As in \cite{MulasZhang}, we let $\mathcal{I}^h:V\to\R$ and $\mathcal{I}_i:H\to\R$ be defined by 
\begin{equation*} 
\mathcal{I}_i(h):=	\mathcal{I}^h(i):=\begin{cases} 1 & \text{ if }i\in h_{in}\\ -1 & \text{ if }i\in h_{out}\\ 0 & \text{otherwise.}
	\end{cases}
	\end{equation*}

\begin{theorem}\label{thm:smallest-nonzero}For $p\geq 1$,
\begin{equation}\label{eq:lambda-min}
    \lambda_{\min}=\min\limits_{f\in\mathrm{span}( \mathcal{I}^h:h\in H)}\frac{\sum_{h\in H}|\langle\mathcal{I}^h,f\rangle|^p}{\min\limits_{g\in \mathrm{span}( \mathcal{I}^h:h\in H)^\bot}\sum_{i\in V}\deg(i)|f(i)-g(i)|^p}=\lambda_{d+1},
\end{equation}
\begin{equation*}%\label{eq:lambda-min-edge}
    \mu_{\min}=\min\limits_{\gamma\in\mathrm{span}( \mathcal{I}_i:i\in V)}\frac{\sum_{i\in V}\frac{1}{\deg(i)}|\langle\mathcal{I}_i,\gamma\rangle|^p}{\min\limits_{\eta\in \mathrm{span}( \mathcal{I}_i:i\in V)^\bot}\sum_{h\in H} |\gamma(h)-\eta(h)|^p}=\mu_{d'+1},
\end{equation*}
where $d:=\dim \mathrm{span}( \mathcal{I}^h:h\in H)^\bot$ and $d':=\dim \mathrm{span}( \mathcal{I}_i:i\in V)^\bot$.
\end{theorem}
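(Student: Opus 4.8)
The plan is to set $W:=\mathrm{span}(\mathcal{I}^h:h\in H)\subset\R^n$, so that the Dirichlet energy $E_p(f)=\sum_{h\in H}|\langle\mathcal{I}^h,f\rangle|^p$ depends only on the orthogonal projection $P_Wf$ of $f$ onto $W$; in particular $E_p$ is constant along every coset $f+W^\perp$, and $\RQ_p(f)=0$ holds exactly when $f\in W^\perp$, which is a subspace of dimension $d$. The crucial observation is that, since the numerator $E_p(f)$ is constant on $f+W^\perp$, the quotient in \eqref{eq:lambda-min} rewrites as $\frac{E_p(f)}{\min_{g\in W^\perp}\|f-g\|_p^p}=\max_{\tilde f\in f+W^\perp}\RQ_p(\tilde f)=\sup_{\tilde f\in\R f\oplus W^\perp}\RQ_p(\tilde f)$, because minimizing $\|\cdot\|_p$ over the coset maximizes $\RQ_p$ there, and rescaling $f$ along $\R f$ does not change this supremum. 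I abbreviate the right-hand side minimum of \eqref{eq:lambda-min} by $\Lambda$.

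First I would prove the upper bound $\lambda_{d+1}\le\Lambda$. For each $f\in W\setminus\{0\}$ the set $\R f\oplus W^\perp$ is a linear subspace of dimension $d+1$, hence lies in $\mathrm{Gen}_{d+1}$, and by the identity above $\sup_{\R f\oplus W^\perp}\RQ_p$ equals the quotient at $f$; taking the infimum over $f$ and invoking \eqref{eq:min-max-Z2-eigenvalue} gives $\lambda_{d+1}\le\Lambda$. The same reasoning applied to $W^\perp\in\mathrm{Gen}_d$, on which $\RQ_p\equiv0$, shows $\lambda_1=\dots=\lambda_d=0$; and since $E_p(f)>0$ for every $f\in W\setminus\{0\}$ (because $W\cap W^\perp=\{0\}$), a compactness argument on the unit sphere of $W$ yields $\Lambda>0$. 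Thus $\lambda_{d+1}$ is a strictly positive min-max eigenvalue, so in particular a genuine nonzero eigenvalue of $\Delta_p$.

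The heart of the argument is the reverse bound: every nonzero eigenvalue $\lambda$ satisfies $\lambda\ge\Lambda$. Let $f$ be a corresponding eigenfunction, so $\RQ_p(f)=\lambda$ by Theorem \ref{theo:eigenvaluesRQ}. Writing the eigenequation as $\lambda\deg(i)|f(i)|^{p-2}f(i)=\sum_{h}\mathcal{I}^h(i)|\langle\mathcal{I}^h,f\rangle|^{p-2}\langle\mathcal{I}^h,f\rangle$, multiplying by $g(i)$ and summing over $i\in V$, the right-hand side collapses because $\sum_i\mathcal{I}^h(i)g(i)=\langle\mathcal{I}^h,g\rangle=0$ for $g\in W^\perp$; this yields the orthogonality relation $\sum_{i\in V}\deg(i)|f(i)|^{p-2}f(i)\,g(i)=0$ for all $g\in W^\perp$ (for $p=1$ the identical computation with $z_i\in\mathrm{Sgn}(f(i))$ and $z_h\in\mathrm{Sgn}(\langle\mathcal{I}^h,f\rangle)$ from \eqref{eq:brief-1-Lap} gives $\sum_i\deg(i)z_ig(i)=0$). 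Feeding this into the convexity (subgradient) inequality for $t\mapsto|t|^p$ then shows $\|f\|_p^p\le\|f-g\|_p^p$ for every $g\in W^\perp$, i.e.\ $f$ realizes $\mathrm{dist}_p(P_Wf,W^\perp)=\|f\|_p$. Consequently $\lambda=E_p(f)/\|f\|_p^p=E_p(P_Wf)/\mathrm{dist}_p(P_Wf,W^\perp)^p\ge\Lambda$, with $P_Wf\neq0$ since $\lambda\neq0$.

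Finally I would combine the three facts. As $\lambda_{d+1}$ is a nonzero eigenvalue, $\lambda_{\min}\le\lambda_{d+1}\le\Lambda$; applying the reverse bound to $\lambda_{\min}$ itself gives $\Lambda\le\lambda_{\min}$. Hence $\lambda_{\min}=\Lambda=\lambda_{d+1}$, which is the asserted identity, and the statement for $\mu_{\min}$ follows verbatim after interchanging the roles of vertices and hyperedges, replacing $\mathcal{I}^h$ by $\mathcal{I}_i$, $W$ by $\mathrm{span}(\mathcal{I}_i:i\in V)$, and adjusting the weights. I expect the main obstacle to be the orthogonality-plus-convexity lemma of the third paragraph: it is what forces an eigenfunction to be the minimal-$\|\cdot\|_p$ representative of its coset, it is the only place where the precise eigenequation (rather than mere membership in $W$) is used, and the $p=1$ case in particular must be handled through the $\mathrm{Sgn}$-valued subgradients rather than by differentiation.
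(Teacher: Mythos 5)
Your reverse bound (testing the eigenequation against $g\in W^\perp$ to get $\sum_i\deg(i)|f(i)|^{p-2}f(i)g(i)=0$, then using the subgradient inequality for $t\mapsto|t|^p$ to conclude that an eigenfunction is the minimal-norm representative of its coset, hence $\lambda\ge\Lambda$ for every nonzero eigenvalue) is correct, including its $p=1$ version via \eqref{eq:brief-1-Lap}, and it is genuinely different from the paper's argument. But there is a real gap earlier: the sentence ``Thus $\lambda_{d+1}$ is a strictly positive min-max eigenvalue'' is a non sequitur. At that point you have established $\lambda_1=\dots=\lambda_d=0$, $\Lambda>0$, and $\lambda_{d+1}\le\Lambda$; none of these bounds $\lambda_{d+1}$ from \emph{below}. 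Your reverse bound only yields the dichotomy $\lambda_{d+1}\in\{0\}\cup[\Lambda,\infty)$, so everything hinges on excluding $\lambda_{d+1}=0$, and without that the closing chain $\lambda_{\min}\le\lambda_{d+1}\le\Lambda\le\lambda_{\min}$ collapses: you have exhibited no nonzero eigenvalue $\le\Lambda$, so neither $\lambda_{\min}\le\Lambda$ nor $\lambda_{d+1}=\Lambda$ follows. This missing lower bound is precisely the content of the paper's hardest step (its item (I), $\lambda_{d+1}\ge\tilde\lambda$), which is proved there by a genus-theoretic intersection argument using the odd nonlinear projection $f\mapsto f-g_f$; the positivity of $\lambda_{d+1}$ is not free and cannot be inferred from $\Lambda>0$.

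The gap is fillable inside your framework by a short topological argument, which you would need to supply. For instance: let $\epsilon_0:=\min\{\RQ_p(f):f\in W,\ \|f\|_p=1\}>0$ (compactness, since $E_p>0$ on $W\setminus\{0\}$). If some $S\in\mathrm{Gen}_{d+1}$ had $\sup_{f\in S\setminus 0}\RQ_p(f)<\epsilon_0$, then $S\setminus 0$ would be disjoint from $W$, so the orthogonal projection onto $W^\perp$ would be nonvanishing on $S\setminus 0$; composing with normalization and an isomorphism $W^\perp\cong\R^d$ gives an odd continuous map $S\setminus 0\to\mathbb{S}^{d-1}$, forcing $\mathrm{gen}(S)\le d$, a contradiction. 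Hence $\lambda_{d+1}\ge\epsilon_0>0$, and your dichotomy then gives $\lambda_{d+1}\ge\Lambda$. (Alternatively, for $p>1$ one can invoke the multiplicity clause of Theorem \ref{theo:minmax} together with the observation that the $0$-eigenfunctions are exactly $W^\perp\setminus\{0\}$, whose genus is $d$; but for $p=1$ that route needs the limiting argument of Theorem \ref{thm:1-Laplace}, as in the paper.) With this patch your proof stands and is arguably cleaner than the paper's: your orthogonality-plus-convexity lemma replaces both the paper's step (III) (which uses a derivative argument for $p>1$ and weak-slope theory for $p=1$) and most of its step (I), and it treats all $p\ge1$ uniformly.
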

\begin{remark}
Equation \eqref{eq:lambda-min} above generalizes Equation (5) in \cite{MulasZhang}. In fact, for $p=2$, by letting
\begin{equation*}
    \sum_{i\in V}\deg(i)|f(i)-\bar g(i)|^2:=\min\limits_{g\in \mathrm{span}( \mathcal{I}^h:h\in H)^\bot}\sum_{i\in V}\deg(i)|f(i)-g(i)|^2
\end{equation*}we have that $\bar f:=f-\bar g$ is orthogonal to $\mathrm{span}( \mathcal{I}^h:h\in H)^\bot$ with respect to the weighted scalar product $(f',g'):=\sum_{i\in V}\deg(i) f'(i) g'(i)$. Therefore,
\begin{align*}
\lambda_{\min}&=\min\limits_{f\in\mathrm{span}( \mathcal{I}^h:h\in H)} \frac{\sum_{h\in H}\langle\mathcal{I}^h,f \rangle^2}{(\bar f,\bar f)}
\\&= 
\min\limits_{\bar f\in \mathrm{span}\{D^{-1}\mathcal{I}^h:h\in H\}}\frac{\sum_{h\in H}\langle\mathcal{I}^h,\bar f \rangle^2}{(\bar f,\bar f)}
\\&=
\min\limits_{f\in\mathrm{span}\{D^{-\frac12}\mathcal{I}^h:h\in H\}}\frac{\sum_{h\in H}\langle D^{-\frac12}\mathcal{I}^h,f \rangle^2}{\langle f,f\rangle}
\end{align*}and this coincides with  Equation (5) in \cite[Lemma 6.1]{MulasZhang}.
\end{remark}

\begin{proof}[Proof of Theorem \ref{thm:smallest-nonzero}]
Let $X:=\mathrm{span}(\mathcal{I}^h:h\in H)^\bot$. We shall prove that
\begin{equation*}
    \lambda_{\min}=\lambda_{d+1}=\tilde{\lambda}:=\min\limits_{f\in X^\bot}\frac{\sum_{h\in H}|\langle\mathcal{I}^h,f\rangle|^p}{\min\limits_{g\in X}\sum_{i\in V}\deg(i)|f(i)-g(i)|^p}.
\end{equation*}
If $d=0$, the claim is straightforward because in this case $X=0$, $X^\bot=\mathbb{R}^n$ and  $$\lambda_{\min}=\min\limits_{f\in  \mathbb{R}^n}\frac{\sum_{h\in H}|\langle\mathcal{I}^h,f\rangle|^p}{ \sum_{i\in V}\deg(i)|f(i)|^p}=\lambda_1.$$

Now, assume $d\ge 1$. Since $X\in \mathrm{Gen}_d$ and $\RQ_p(f)=0$ for all $f\in X$, we have $\lambda_1=\ldots=\lambda_d=0$. From the local compactness of $X^\bot$, the zero-homogeneity of $\RQ_p(f)$ and the fact that $E_p(f)>0$ $\forall X^\bot\setminus X$, it follows that $\tilde{\lambda}>0$. For the case when $p>1$, we still need to prove the following three steps.
\begin{enumerate}[(I)]
    \item $\lambda_{d+1}\ge \tilde{\lambda}$:\newline
    Observe that $\dim X^\bot=n-d$. Since the $l^p$-norm is smooth and strictly convex for $p>1$, for each $f$ there is a  unique $g_f\in X$ such that $$ \sum_{i\in V}\deg(i)|f(i)-g_f(i)|^p=\min\limits_{g\in X}\sum_{i\in V}\deg(i)|f(i)-g(i)|^p$$ 
    and the map $\varphi:f\mapsto f-g_f$ is smooth. Moreover, $\varphi|_{X^\bot}:X^\bot\to \varphi(X^\bot)$ is  bicontinuous (i.e., homeomorphism). Clearly, $\varphi$ is such that $-f\mapsto -f-g_{-f}=-f+g_f$, therefore $\varphi$ is odd. Hence, if we let $f^\bot$ be the projection of $f$ to $X^\bot$, we get an odd homeomorphism $\psi:R^n\to R^n$, $f\mapsto f-g_{f^\bot}$.\newline
    Thus, because of the homotopy property of the $\mathbb{Z}_2$-genus, for any $S\in \mathrm{Gen}_{d+1}$ we have that the image $\psi^{-1}(S)\in \mathrm{Gen}_{d+1}$. Moreover, by the intersection property of the $\mathbb{Z}_2$-genus, $\psi^{-1}(S)\cap X^\bot\ne\emptyset$, which implies $S\cap \psi(X^\bot)=\psi(\psi^{-1}(S)\cap X^\bot)\ne\emptyset$. Also note that $\psi(X^\bot)=\varphi(X^\bot)$.   Hence for any $S\in \mathrm{Gen}_{d+1}$, $$\sup\limits_{f\in S}\RQ_p(f)\ge \inf\limits_{f\in \varphi(X^\bot)}\RQ_p(f)=\tilde{\lambda}.$$ This proves that $\lambda_{d+1}\ge \tilde{\lambda}$. 
     \item $\lambda_{d+1}\le \tilde{\lambda}$:
     
     For any $f\in X^\bot\setminus X$, let $X':=\mathrm{span}(X\cup\{f\})$. Then, $X'\in \mathrm{Gen}_{d+1}$ and  
     $$\lambda_{d+1}\le \sup\limits_{f'\in X'}\RQ_p(f')=\sup\limits_{g\in X}\frac{E_p(f)}{\sum_{i\in V}\deg(i)|f(i)+g(i)|^p}=\frac{\sum_{h\in H}|\langle\mathcal{I}^h,f\rangle|^p}{\min\limits_{g\in X}\sum_{i\in V}\deg(i)|f(i)-g(i)|^p}.$$ Since this holds for all $f\in X^\bot$, we derive that $\lambda_{d+1}\le \tilde{\lambda}$. 
     \item There is no positive eigenvalue between $\lambda_1=0$ and $\lambda_{d+1}>0$:\newline
     Suppose the contrary and assume that $f$ is an eigenfunction with eigenvalue $\RQ_p(f)\in(0,\tilde{\lambda})$. Then $\nabla \RQ_p(f)=0$.  Consider the function $t\mapsto \RQ_p(f-tg_f)$. On the one hand, $$\frac{d}{dt}\biggl|_{t=0}\RQ_p(f-tg_f)=-\langle\nabla \RQ_p(f),g_f\rangle=0.$$
  On the other hand, $E_p(f-tg_f)=E_p(f)$ and the function 
  \begin{equation}\label{function:t}
      t\mapsto \sum_{i\in V}\deg(i)|f(i)-tg_f(i)|^p
  \end{equation}
   is a \emph{strictly convex} function with minimum at $t=1$. This implies that \eqref{function:t} is strictly  decreasing and convex on $(-1,1)$, thus $$\frac{d}{dt}\biggl|_{t=0}\,\sum_{i\in V}\deg(i)|f(i)-tg_f(i)|^p<0.$$ Hence, we get  $\frac{d}{dt}\bigl|_{t=0}\RQ_p(f-tg_f)>0$, which leads to a contradiction. 
\end{enumerate}
This proves the case $p>1$. Finally, we complete the proof of the case $p=1$. Since
\begin{equation*}
    \lambda_{d+1}(\Delta_p)\xrightarrow{p\rightarrow 1^+} \lambda_{d+1}(\Delta_1)\qquad \text{and}\qquad \tilde{\lambda}(\Delta_p)\xrightarrow{p\rightarrow 1^+} \tilde{\lambda}(\Delta_1),
\end{equation*}we only need to prove that (III) holds also for $\Delta_1$. Suppose the contrary and let $\hat{f}$ be an eigenfunction corresponding to an eigenvalue $\lambda\in(0,\tilde{\lambda})$. Then, $0\in\nabla E_1(\hat{f})-\lambda \nabla \|\hat{f}\|_1$. Now, consider a flow near $\hat{f}$ defined by $\eta(f,t):=f-tg_f$, where $t\ge 0$ and $f\in \mathbb{B}_\delta(f)$ for sufficiently small $\delta>0$. Note that $$ E_1(f-tg_f)-\lambda  \|f-tg_f\|_1= E_1(f)-\lambda  \|f-tg_f\|_1$$ is an increasing function of $t$, since $\|f-tg_f\|_1<\|f\|_1$ and $\|\cdot\|_1$ is convex. Consequently, by the theory of weak slope \cite{Liusternik}, we have that $0\not\in \nabla (E_1(\hat{f})-\lambda \|\hat{f}\|_1)=\nabla E_1(\hat{f})-\lambda \nabla \|\hat{f}\|_1$, which is a contradiction. This completes the proof.
\end{proof}
We shall now discuss some consequences of Theorem \ref{thm:smallest-nonzero}.

\begin{corollary}\label{cor:lambda-min-}For $p\geq 1$,
    \begin{equation*}%\label{eq:lambda-min-}
    \lambda_{\min}\ge \min\limits_{f\in\mathrm{span}( \mathcal{I}^h:h\in H)}\frac{\sum_{h\in H}|\langle\mathcal{I}^h,f\rangle|^p}{ \sum_{i\in V}\deg(i)|f(i)|^p}.
\end{equation*}
\end{corollary}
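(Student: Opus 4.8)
The plan is to deduce Corollary \ref{cor:lambda-min-} directly from the variational characterization of $\lambda_{\min}$ established in Theorem \ref{thm:smallest-nonzero}. The starting point is the equality
\begin{equation*}
    \lambda_{\min}=\min\limits_{f\in\mathrm{span}( \mathcal{I}^h:h\in H)}\frac{\sum_{h\in H}|\langle\mathcal{I}^h,f\rangle|^p}{\min\limits_{g\in \mathrm{span}( \mathcal{I}^h:h\in H)^\bot}\sum_{i\in V}\deg(i)|f(i)-g(i)|^p},
\end{equation*}
and the claimed lower bound is obtained from the very same expression by replacing the inner minimization in the denominator with the single choice $g=0$. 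The key observation is therefore that enlarging the denominator can only decrease the quotient.

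First I would fix any $f\in\mathrm{span}(\mathcal{I}^h:h\in H)$ and compare the two denominators. Since $g=0$ lies in $\mathrm{span}(\mathcal{I}^h:h\in H)^\bot$ (the zero vector belongs to every subspace), the minimum over all $g$ in that orthogonal complement is at most the value at $g=0$, i.e.
\begin{equation*}
    \min\limits_{g\in \mathrm{span}( \mathcal{I}^h:h\in H)^\bot}\sum_{i\in V}\deg(i)|f(i)-g(i)|^p\;\le\;\sum_{i\in V}\deg(i)|f(i)|^p.
\end{equation*}
Because all quantities involved are nonnegative and the numerator $\sum_{h\in H}|\langle\mathcal{I}^h,f\rangle|^p$ does not depend on $g$, replacing the smaller denominator on the left by the larger one on the right decreases each fraction pointwise in $f$. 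Hence for every such $f$,
\begin{equation*}
    \frac{\sum_{h\in H}|\langle\mathcal{I}^h,f\rangle|^p}{\min\limits_{g\in \mathrm{span}( \mathcal{I}^h:h\in H)^\bot}\sum_{i\in V}\deg(i)|f(i)-g(i)|^p}\;\ge\;\frac{\sum_{h\in H}|\langle\mathcal{I}^h,f\rangle|^p}{\sum_{i\in V}\deg(i)|f(i)|^p}.
\end{equation*}

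Finally I would take the minimum over $f\in\mathrm{span}(\mathcal{I}^h:h\in H)$ on both sides. The left side equals $\lambda_{\min}$ by Theorem \ref{thm:smallest-nonzero}, while the right side is exactly the quantity appearing in the statement of the corollary, yielding the desired inequality. I do not expect any genuine obstacle here; the only point requiring a word of care is ensuring the denominators are strictly positive so the fractions are well defined, which is already guaranteed in the proof of Theorem \ref{thm:smallest-nonzero} (the minimization is taken over $f$ for which $E_p(f)>0$, so $f$ does not lie in $\mathrm{span}(\mathcal{I}^h:h\in H)^\bot$ and the normalizing denominators are nonzero). Thus the corollary is essentially a monotonicity remark: passing from the optimal orthogonal correction $g$ to the trivial correction $g=0$ can only shrink the Rayleigh-type quotient.
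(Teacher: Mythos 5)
Your proof is correct and is essentially the paper's own argument: the paper simply states that the corollary ``follows immediately from Theorem \ref{thm:smallest-nonzero}'', and your monotonicity observation (taking the admissible choice $g=0$ in the inner minimization, which can only enlarge the denominator and hence shrink each quotient) is precisely the intended deduction. The positivity check you add is a harmless and correct bit of extra care.
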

\begin{proof}
It follows immediately from Theorem \ref{thm:smallest-nonzero}.
\end{proof}
 \begin{corollary} For $p\geq 1$, let $\lambda_{p,\min}$ be the smallest positive eigenvalue of the $p$-Laplacian. Then,
    \begin{equation*}%\label{eq:lambda-min-2}
    \lambda_{p,\min}\ge \begin{cases}
    |H|^{1-\frac p2} \lambda_{2,\min}^{\frac p2}  &\text{ if }p\ge2,
    \\ \vol(V)^{\frac p2-1}\lambda_{2,\min}^{\frac p2}  &\text{ if }p\le2.
    \end{cases}
\end{equation*}
\end{corollary}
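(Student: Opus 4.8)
The plan is to derive both inequalities from the exact variational formula of Theorem~\ref{thm:smallest-nonzero}, by comparing the $p$-quotient with the $2$-quotient \emph{pointwise} on the common domain $\mathrm{span}(\mathcal{I}^h:h\in H)$. For $f$ in this span write $a_h:=\langle\mathcal{I}^h,f\rangle$ and set
$$N_p(f):=\sum_{h\in H}|a_h|^p,\qquad D_p(f):=\min_{g\in\mathrm{span}(\mathcal{I}^h:h\in H)^\perp}\sum_{i\in V}\deg(i)|f(i)-g(i)|^p,$$
so that Theorem~\ref{thm:smallest-nonzero} reads $\lambda_{p,\min}=\min_f N_p(f)/D_p(f)$ and, specialized to $p=2$, $\lambda_{2,\min}=\min_f N_2(f)/D_2(f)$. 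It therefore suffices to prove, for every fixed $f$, that $N_p(f)/D_p(f)\ge c_p\,(N_2(f)/D_2(f))^{p/2}$ with $c_p=|H|^{1-p/2}$ when $p\ge2$ and $c_p=\vol(V)^{p/2-1}$ when $p\le2$. Since $t\mapsto t^{p/2}$ is increasing and $N_2(f)/D_2(f)\ge\lambda_{2,\min}$ for all $f$, taking the infimum over $f$ then delivers the claim in both regimes.

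For the numerator I would invoke the equivalence of the $\ell^p$ and $\ell^2$ norms on $\R^{|H|}$ applied to $a=(a_h)_h$: for $p\ge2$ one has $\|a\|_p^p\ge|H|^{1-p/2}\|a\|_2^p$, while for $p\le2$ one has $\|a\|_p^p\ge\|a\|_2^p$. This already isolates the advertised constants on the numerator side and reduces the whole problem to bounding $D_p(f)$ from above by a multiple of $D_2(f)^{p/2}$.

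The denominator bound is the main obstacle, since the $\ell^p$-projection is awkward to analyze directly. The trick I would use is to let $g^\ast$ be the (weighted) $\ell^2$-minimizer realizing $D_2(f)$ and exploit that $D_p$ is an infimum, so $D_p(f)\le\sum_i\deg(i)|f(i)-g^\ast(i)|^p$. Setting $b_i:=f(i)-g^\ast(i)$ and $c_i:=\deg(i)\,b_i^2\ge0$ (whence $\sum_i c_i=D_2(f)$), one rewrites $\sum_i\deg(i)|b_i|^p=\sum_i\deg(i)^{1-p/2}c_i^{p/2}$. For $p\ge2$ the standing assumption $\deg(i)\ge1$ gives $\deg(i)^{1-p/2}\le1$, and then $\sum_i c_i^{p/2}\le(\sum_i c_i)^{p/2}$ yields $D_p(f)\le D_2(f)^{p/2}$. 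For $p\le2$ I would apply Hölder with conjugate exponents $\tfrac{2}{2-p}$ and $\tfrac2p$, which produces $\sum_i\deg(i)^{1-p/2}c_i^{p/2}\le(\sum_i\deg(i))^{1-p/2}(\sum_i c_i)^{p/2}=\vol(V)^{1-p/2}D_2(f)^{p/2}$.

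Combining the numerator estimate with the denominator estimate gives exactly $N_p(f)/D_p(f)\ge c_p\,(N_2(f)/D_2(f))^{p/2}$ in each case, and the final minimization over $f\in\mathrm{span}(\mathcal{I}^h:h\in H)$ completes the argument. I expect the norm-equivalence step to be routine bookkeeping; the delicate point is the denominator, where the naive comparison of $\ell^p$- and $\ell^2$-projections is replaced by evaluating the $\ell^p$-distance at $g^\ast$ and then separating the degree weights from the total mass $\sum_i c_i=D_2(f)$ via monotonicity of $x\mapsto x^{p/2}$ (for $p\ge2$) or Hölder (for $p\le2$).
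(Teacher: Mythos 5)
Your proof is correct, and although it uses the same two elementary estimates as the paper (comparison of the $\ell^p$- and $\ell^2$-norms of the hyperedge vector $(\langle\mathcal{I}^h,f\rangle)_{h\in H}$ for the numerator, and the degree-weighted H\"older/power-mean bound for the denominator), it deploys them in a structurally different way, and the difference matters. The paper first drops the inner minimization over $g$, i.e.\ it passes to Corollary \ref{cor:lambda-min-}, applies the two estimates to the relaxed quotient with denominator $\sum_{i\in V}\deg(i)|f(i)|^p$, and then concludes by identifying $\min_{f\in\mathrm{span}(\mathcal{I}^h:h\in H)}\sum_{h\in H}|\langle\mathcal{I}^h,f\rangle|^2/\sum_{i\in V}\deg(i)|f(i)|^2$ with $\lambda_{2,\min}$. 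That final identification is not backed by anything proved in the paper: Corollary \ref{cor:lambda-min-} only gives that this relaxed minimum is $\le\lambda_{2,\min}$, which is the direction that does not help, and the inequality can in fact be strict. Indeed, for $p=2$ the exact quotient of Theorem \ref{thm:smallest-nonzero} is minimized over $\mathrm{span}(D^{-1}\mathcal{I}^h:h\in H)$ (see the remark following that theorem), not over $\mathrm{span}(\mathcal{I}^h:h\in H)$; for example, for the graph consisting of a triangle with one pendant vertex, the eigenfunction $f_0$ of $\lambda_{2,\min}=(15-\sqrt{33})/12\approx 0.771$ satisfies $\sum_i f_0(i)\ne 0$, and shifting it by the constant needed to place it in $\mathrm{span}(\mathcal{I}^h:h\in H)$ leaves the numerator unchanged (constants lie in the kernel) while strictly increasing $\sum_i\deg(i)|f(i)|^2$ (because $f_0$ is orthogonal to constants only in the \emph{degree-weighted} inner product), so the relaxed minimum falls to at most about $0.742$, strictly below $\lambda_{2,\min}$. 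Your proof never relaxes the denominators: you keep $D_p(f)$ and $D_2(f)$ as constrained minima and compare them by evaluating the $p$-objective at the weighted $\ell^2$-minimizer $g^\ast$, after which your degree-separation computation is exactly the paper's denominator estimate applied to $f-g^\ast$ instead of $f$, and you can finish with the genuine identity $\lambda_{2,\min}=\min_f N_2(f)/D_2(f)$ from Theorem \ref{thm:smallest-nonzero} via the pointwise bound $N_2(f)/D_2(f)\ge\lambda_{2,\min}$. So your route not only proves the stated constants; it also repairs a gap in the paper's own argument.
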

\begin{proof} For $p\le2$, it is known that $\sum_{h\in H}|\langle\mathcal{I}^h,f\rangle|^p\ge \left(\sum_{h\in H}|\langle\mathcal{I}^h,f\rangle|^2\right)^{p/2}$ and 
$$\left(\frac{\sum_{i\in V}\deg(i)|f(i)|^p}{\vol(V)}\right)^{\frac1p}\le \left(\frac{\sum_{i\in V}\deg(i)|f(i)|^2}{\vol(V)}\right)^{\frac12}.$$
Thus, applying Corollary \ref{cor:lambda-min-}, we have $$ \lambda_{p,\min}\ge\min\limits_{f\in\mathrm{span}( \mathcal{I}^h:h\in H)}\vol(V)^{\frac p2-1}\left(\frac{\sum_{h\in H}|\langle\mathcal{I}^h,f\rangle|^2}{ \sum_{i\in V}\deg(i)|f(i)|^2}\right)^{\frac p2}=\vol(V)^{\frac p2-1}\lambda_{2,\min}^{\frac p2}.$$
The case of $p\ge 2$ is similar. 
\end{proof}

\begin{remark}\label{remark:lambda-min-pq}
We further have 
    \begin{equation*}%\label{eq:lambda-min-pq}
    \frac{\hat{\lambda}_{p,\min}}{\hat{\lambda}_{q,\min}}\ge \begin{cases}
    |H|^{\frac 1p-\frac 1q}  &\text{ if }p\ge q,
    \\ \vol(V)^{\frac 1q-\frac 1p}   &\text{ if }p\le q,
    \end{cases}
\end{equation*}
where $\hat{\lambda}_{p,\min}=\lambda_{p,\min}^{\frac1p}$. This implies that 
\begin{equation*}
    \vol(V)^{\frac 1q-\frac 1p}\ge \frac{\hat{\lambda}_{p,\min}}{\hat{\lambda}_{q,\min}}\ge  |H|^{\frac 1p-\frac 1q} \qquad\text{ if }p\ge q,
\end{equation*}thus $\hat{\lambda}_{p,\min}$ is a continuous function of $p\in [1,\infty)$ and the limit $\lim\limits_{p\to+\infty}\hat{\lambda}_{p,\min}\in[0,n]$ exists.
\end{remark}
\begin{remark}
For $p\geq 1$, let $$C_p:=\max\limits_{f\in\mathrm{span}( \mathcal{I}^h:h\in H)}\frac{ \sum_{i\in V}\deg(i)|f(i)|^p}{\min\limits_{g\in \mathrm{span}( \mathcal{I}^h:h\in H)^\bot}\sum_{i\in V}\deg(i)|f(i)-g(i)|^p }.$$
By Corollary \ref{cor:lambda-min-} and Remark \ref{remark:lambda-min-pq}, we get that
   \begin{equation*}
    \lambda_{\min}\le C_p \cdot \min\limits_{f\in\mathrm{span}( \mathcal{I}^h:h\in H)}\frac{\sum_{h\in H}|\langle\mathcal{I}^h,f\rangle|^p}{ \sum_{i\in V}\deg(i)|f(i)|^p},
\end{equation*}which can be seen as a dual inequality with respect to the one in Corollary \ref{cor:lambda-min-}. Note that the constant $C_p$ is such that $C_2=1$ for all oriented hypergraphs and $C_1=2$ in the graph case.
\end{remark}

\section{Vertex partition problems}\label{section:vertexpartition}
In \cite{MulasZhang}, two vertex partition problems for oriented hypergraphs have been discussed: the $k$-coloring, that is, a function $f:V\rightarrow\{1,\ldots,k\}$ such that $f(i)\neq f(j)$ for all $i\neq j\in h$ and for all $h\in H$, and the generalized Cheeger problem. In this section we discuss more partition problems and we also define a new coloring number that takes signs into account as well.\newline

In \cite{MulasZhang}, the generalized Cheeger constant is defined as
	\begin{equation*}
		h:=\min_{\emptyset\neq S:\vol S\leq \frac{1}{2}\vol \overline{S}}\frac{e(S)}{\vol(S)},
		\end{equation*}where, given $\emptyset\neq S\subseteq V$,
			\begin{equation*}
		e(S):=\sum_{h\in H}\biggl(\#(S\cap h_{in})-\#(S\cap h_{out})\biggr)^2,\end{equation*}$\overline{S}:=V\setminus S$ and
		\begin{equation*}
		    \vol(S):=\sum_{i\in S}\deg (i).
		\end{equation*}We generalize $e(S)$ by letting, for $p\geq 1$ and $\emptyset\neq S\subseteq V$,
		\begin{equation*}
		    e_p(S):=\sum_{h\in H}|\#(S\cap h_{in})-\#(S\cap h_{out})|^p.
		\end{equation*}
		\begin{remark}For a graph, $e_p(S)=e(S)=|\partial(S)|$ is the number of edges between $S$ and $\overline{S}$, for all $p$. It measures, therefore, the \emph{flow} between $S$ and $\overline{S}$. More generally, we can say that computing $e_p(S)$ (as well as $\vol S$) means \emph{deleting} all vertices in $\overline{S}$, in the sense of \cite[Definition 2.20]{MulasZhang}, and then computing $e_p$ (respectively, the volume) on the vertex set of the sub-hypergraph obtained. Furthermore, when $e_p$ is computed on the vertex set, 
		\begin{equation*}
   0 \leq e_p(V)=\sum_{h\in H}|\#h_{in}-\#h_{out}|^p\leq \sum_{h\in H}|\#h|^p,
\end{equation*}where the first inequality is an equality if and only if $\#h_{in}=\#h_{out}$ for each hyperedge, and the second one is an equality if and only if there are either only inputs or only outputs. Hence, we could see the case $e_p(V)=0$ as a \emph{balance condition}. Having $\#h_{in}=\#h_{out}$ means that \emph{what comes in is the same as what goes out}. Hence, also in the general case we can say that $e_p$ measures a flow.
		\end{remark}
		\subsection{$k$-cut problems}
We now generalize the \emph{balanced minimum $k$-cut problem} and the \emph{max $k$-cut problem}, known for graphs \cite{kcut,maxcut}, to the case of hypergraphs.
		\begin{definition}
		Given $k\in\{2,\ldots,n\}$, the \textbf{balanced minimum $k$-cut} is
		\begin{equation*}
		    \min\limits_{\text{partition }(V_1,\ldots,V_k)}\sum_{i=1}^k\frac{e_p(V_i)}{\vol(V_i)}.
		\end{equation*}The \textbf{maximum $k$-cut} is
		\begin{equation*}
		    \max\limits_{\text{partition }(V_1,\ldots,V_k)}\sum_{i=1}^k e_p(V_i).
		\end{equation*}
		\end{definition}
\begin{lemma}\label{lemma:ep}
        For each $\emptyset\neq S\subseteq V$ and for each $p\geq 1$,
        \begin{equation*}
           \lambda_1\leq \frac{e_p(S)}{\vol S}\leq \lambda_n.
        \end{equation*}Therefore, in particular, for each $k\in\{2,\ldots,n\}$
        \begin{equation*}
            \lambda_n\geq \frac1k\cdot \max\limits_{\text{partition }(V_1,\ldots,V_k)}\sum_{i=1}^k\frac{e_p(V_i)}{\vol(V_i)}\geq \frac{1}{k\cdot \vol(V)}\max\limits_{\text{partition }(V_1,\ldots,V_k)}\sum_{i=1}^k e_p(V_i)
        \end{equation*}and
        \begin{equation*}
             \lambda_1\leq \frac1k\cdot \min\limits_{\text{partition }(V_1,\ldots,V_k)}\sum_{i=1}^k\frac{e_p(V_i)}{\vol(V_i)}.
        \end{equation*}
\end{lemma}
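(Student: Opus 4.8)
The plan is to recognize the combinatorial ratio $e_p(S)/\vol(S)$ as the generalized Rayleigh quotient $\RQ_p$ evaluated at an indicator function, and then to invoke the variational characterizations $\lambda_1=\min_{f}\RQ_p(f)$ and $\lambda_n=\max_{f}\RQ_p(f)$ established in Corollary \ref{cor:minmaxRQ} (and, for $p=1$, in Theorem \ref{thm:1-Laplace} together with the remark following it).

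First I would take as test function the indicator $\chi_S\in C(V)$, with $\chi_S(i)=1$ for $i\in S$ and $\chi_S(i)=0$ otherwise. For each hyperedge $h$ one has $\sum_{i\text{ input of }h}\chi_S(i)=\#(S\cap h_{in})$ and $\sum_{j\text{ output of }h}\chi_S(j)=\#(S\cap h_{out})$, so the numerator of $\RQ_p(\chi_S)$ is exactly $\sum_{h\in H}|\#(S\cap h_{in})-\#(S\cap h_{out})|^p=e_p(S)$; the denominator is $\sum_{i\in V}\deg(i)|\chi_S(i)|^p=\sum_{i\in S}\deg(i)=\vol(S)$, which is strictly positive because $S\neq\emptyset$ and the standing assumption excludes vertices of degree zero. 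Hence $\RQ_p(\chi_S)=e_p(S)/\vol(S)$, and the two-sided bound $\lambda_1\le e_p(S)/\vol(S)\le\lambda_n$ is immediate.

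The $k$-cut consequences then follow by elementary manipulations. Applying the main inequality to each block $V_i$ of a partition gives $\lambda_1\le e_p(V_i)/\vol(V_i)\le\lambda_n$ for every $i$; summing over $i$, dividing by $k$, and then taking the minimum (respectively the maximum) over all partitions yields both the $\lambda_1$ lower bound and the first of the $\lambda_n$ inequalities. For the remaining inequality, which compares the balanced and the plain $k$-cut, I would use only the termwise estimate $e_p(V_i)/\vol(V_i)\ge e_p(V_i)/\vol(V)$, valid since $\vol(V_i)\le\vol(V)$; summing this and then using that a termwise inequality between two functions of the partition is preserved under taking maxima gives the claim.

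I do not expect any genuine obstacle here, since the whole argument reduces to reading off the ratio as a Rayleigh quotient at an indicator function. The only points deserving a word of care are the positivity of $\vol(S)$, guaranteed by the no-degree-zero assumption, and the correct orientation of the $\max$/$\min$ monotonicity step in the final inequality; neither is substantive.
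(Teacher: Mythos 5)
Your proposal is correct and follows essentially the same route as the paper: the paper's proof likewise plugs the indicator function of $S$ into $\RQ_p$, observes that $\RQ_p(\chi_S)=e_p(S)/\vol(S)$, and deduces the partition inequalities by applying the bound to each block $V_i$. Your write-up merely spells out the summation and $\max$/$\min$ monotonicity steps (and the harmless bound $\vol(V_i)\le\vol(V)$) that the paper leaves implicit.
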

\begin{proof}
Let $f\in C(V)$ be $1$ on $S$ and $0$ on $\bar{S}$. Then, 
\begin{equation*}
    \RQ_p(f)=\frac{e_p(S)}{\vol (S)}.
\end{equation*}The second claim follows by applying the first one to all the $V_i$'s.
\end{proof}
\subsection{Signed coloring number}
We now introduce the new notion of \emph{signed coloring number}, that takes into account also the input/output structure of the hypergraph. We denote by $\chi(\Gamma)$ the coloring number defined in \cite{MulasZhang}.
\begin{definition}A \textbf{signed $k$-coloring of the vertices} is a function $f:V\to \{1,\ldots,k\}$ such that, for all $h\in H$, $f(i)\ne f(j)$ if $i$ and $j$ are anti-oriented in $h$. The \textbf{signed coloring number} of $\Gamma$, denoted $\chi_{\sgn}(\Gamma)$ is the minimal $k$ such that there exists a signed $k$-coloring.
\end{definition}
\begin{remark}
    Note that $\chi_{\sgn}(\Gamma)\leq \chi(\Gamma)$. Also, $\chi_{\sgn}\leq2$ if and only if $\Gamma$ is bipartite. 
\end{remark} Applying Lemma \ref{lemma:ep} to the signed coloring number, we get the following corollary.
\begin{corollary}
Let $\chi_{\sgn}:=\chi_{\sgn}(\Gamma)$ and let $V_1,\ldots,V_{\chi_{\sgn}}$ be the corresponding coloring classes. For each $p\geq 1$,
\begin{equation}\label{eq:signedcolor}
           \lambda_1\leq \frac{1}{\chi_{\sgn}}\Biggl(\sum_{i=1}^{\chi_{\sgn}}\sum_{h\in H}\frac{|\#(V_i\cap h)|^p}{\vol(V_i)}\Biggr) \leq \lambda_n.
        \end{equation}
\end{corollary}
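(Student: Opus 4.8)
The plan is to reduce the statement to a direct application of Lemma~\ref{lemma:ep} to each signed color class, the only genuine work being a combinatorial identity that collapses the signed flow $e_p(V_i)$ into the unsigned count appearing in \eqref{eq:signedcolor}.

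First I would exploit the defining property of a signed coloring: for every hyperedge $h$, anti-oriented vertices receive distinct colors. Hence, within a fixed color class $V_i$, no two vertices lying in a common hyperedge $h$ can be anti-oriented, so any two vertices of $V_i\cap h$ are co-oriented in $h$. Since $h_{in}$ and $h_{out}$ are disjoint, this forces $V_i\cap h$ to be contained entirely in $h_{in}$ or entirely in $h_{out}$. In either case one of the two counts $\#(V_i\cap h_{in})$, $\#(V_i\cap h_{out})$ vanishes, so that
\[
\bigl|\#(V_i\cap h_{in})-\#(V_i\cap h_{out})\bigr| = \#(V_i\cap h).
\]
Raising to the $p$-th power and summing over all $h\in H$ then yields the key identity $e_p(V_i)=\sum_{h\in H}|\#(V_i\cap h)|^p$ for each color class $V_i$ (each of which is nonempty, since $\chi_{\sgn}$ is the minimal number of colors).

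With this identity in hand, I would apply Lemma~\ref{lemma:ep} with $S=V_i$ for each $i=1,\ldots,\chi_{\sgn}$, obtaining
\[
\lambda_1\le \frac{\sum_{h\in H}|\#(V_i\cap h)|^p}{\vol(V_i)}\le \lambda_n .
\]
Summing these $\chi_{\sgn}$ inequalities and dividing by $\chi_{\sgn}$ gives precisely \eqref{eq:signedcolor}. The only nontrivial ingredient is the combinatorial observation above; once it is established, the bound is a straightforward averaging argument. I expect the main (and only mild) obstacle to be the careful translation of the co-orientation property into the vanishing of one of the two orientation counts for each hyperedge, which is exactly what converts the signed difference $e_p(V_i)$ into the unsigned cardinality $\#(V_i\cap h)$.
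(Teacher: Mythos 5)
Your proof is correct and takes essentially the same route as the paper's: both reduce \eqref{eq:signedcolor} to Lemma~\ref{lemma:ep} applied to each color class via the identity $|\#(V_i\cap h_{in})-\#(V_i\cap h_{out})|=\#(V_i\cap h)$, followed by averaging over the $\chi_{\sgn}$ classes. The only difference is that the paper states this identity as immediate from the definition of signed coloring, while you spell out the co-orientation argument (using disjointness of $h_{in}$ and $h_{out}$) and also note that minimality of $\chi_{\sgn}$ guarantees each class is nonempty, which is a welcome extra bit of care.
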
Also, the upper bound in \eqref{eq:signedcolor} shrinks to an equality for $p=1$.
\begin{proof}The first fact follows from Lemma \ref{lemma:ep} since, by definition of signed coloring number, 
\begin{equation*}
    |\#(V_i\cap h_{in})-\#(V_i\cap h_{out})|=\#(V_i\cap h),
\end{equation*}for each coloring class $V_i$.\newline

In the particular case of $p=1$, $\sum_{h\in H}\#(V_i\cap h)=\vol(V_i)$ for each $i$, therefore
\begin{equation*}
   \frac{1}{\chi_{\sgn}}\Biggl(\sum_{i=1}^{\chi_{\sgn}}\sum_{h\in H}\frac{\#(V_i\cap h)}{\vol(V_i)}\Biggr)=1.
\end{equation*}Since we know, from Lemma \ref{lemma:rq1}, that $\max_f\RQ_1(f)=1$, this proves that the upper bound in \eqref{eq:signedcolor} shrinks to an equality for $p=1$.
\end{proof}
\begin{remark}The fact that the upper bound in \eqref{eq:signedcolor} shrinks to an equality for $p=1$ is particularly interesting because this is similar to what happens for the Cheeger constant $h$ in the case of graphs, and for the Cheeger-like constant $Q$ defined in \cite{Cheeger-like-graphs} for graphs and generalized in \cite{Sharp} for hypergraphs. In fact, we have that:
\begin{enumerate}
    \item For connected graphs, the Cheeger constant $h$ can be used for bounding $\lambda_2$ in the case of $\Delta_2$ and, as shown in \cite{Hein2,Chang}, it is equal to $\lambda_2$ in the case of $\Delta_1$.
    \item For general hypergraphs, the Cheeger-like constant $Q$ can be used for bounding $\lambda_n$ in the case of $\Delta_2$ and $\Delta^H_2$, and it is equal to $\lambda_n$ in the case of $\Delta^H_1$ (cf. \cite{Sharp}).
    \item In \eqref{eq:signedcolor} we again have something similar, because the quantity that bounds $\lambda_n$ from below for $\Delta_p$ equals $\lambda_n$ for $\Delta_1$.
\end{enumerate}Of course, the main difference between the last case and the first two is that $h$ and $Q$ are constants that are independent of $p$, while the quantity in \eqref{eq:signedcolor} changes when $p$ changes.
\end{remark}
\begin{remark}
In the case of graphs, by definition of signed coloring number we have that $\#(V_i\cap h)\in\{0,1\}$ for each coloring class $V_i$ and for each each edge $h$. In particular, $$\sum_{e\in E}|\#(V_i\cap e)|^p= \vol(V_i)$$ and the constant appearing in \eqref{eq:signedcolor} is equal to $1$ for all $p$.
\end{remark}
\subsection{Multiway partitioning}
In this section we generalize the notion of $k$-cut and we use it for bounding the smallest and largest eigenvalue of the classical Laplacian $\Delta_2$.

\begin{definition}
A $k$-tuple $(S_1,\ldots,S_k)$ of sets $S_r\subseteq V$ is called a \textbf{$(k,l)$-family} if it covers $S_1\cup\ldots \cup S_k$ exactly $l$-times (i.e., each vertex $i\in S_1\cup\ldots \cup S_k$ lies in exactly $l$ sets $S_{i_1},\ldots,S_{i_l}$). If, furthermore, $S_1\cup\ldots\cup S_k=V$, then we call the $(k,l)$-family a \textbf{$(k,l)$-cover}. 
\end{definition}
\begin{remark}
A $(k,1)$-cover is a $k$-partition (or $k$-cut).
\end{remark}
\begin{theorem}\label{theo:kl}
Let $\lambda_1$ and $\lambda_n$ be the smallest and the largest eigenvalue of the classical normalized Laplacian $\Delta_2$, respectively. For any $(k,l)$-family,
\begin{equation*}
    \lambda_1\leq \frac{k\cdot (e(S_1)+\ldots+ e(S_k))-l^2\cdot e(S_1\cup\ldots \cup S_k)}{(k-l)\cdot l\cdot \vol(S_1\cup\ldots \cup S_k)} \leq \lambda_n.
\end{equation*}
\end{theorem}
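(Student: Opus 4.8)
The plan is to produce a family of explicit test functions whose Rayleigh quotients, taken together, reproduce the central expression, and then to squeeze that expression between $\lambda_1=\min_f\RQ_2(f)$ and $\lambda_n=\max_f\RQ_2(f)$ from Corollary \ref{cor:minmaxRQ}. Write $U:=S_1\cup\ldots\cup S_k$ and let $\chi_r$ and $\chi_U$ be the indicator functions of $S_r$ and $U$ in $C(V)$. The defining property of a $(k,l)$-family is that every vertex of $U$ lies in exactly $l$ of the sets $S_r$, which I would record as the single identity
$$\sum_{r=1}^k \chi_r = l\,\chi_U.$$
This identity is the engine of the whole argument. The one creative ingredient is then the choice of \emph{centered} test functions $f_r:=\chi_r-\frac{l}{k}\chi_U$ for $r=1,\ldots,k$, which by the displayed identity satisfy $\sum_r f_r=0$.

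First I would set up the two quadratic forms underlying $\RQ_2$. Using the incidence functions $\mathcal{I}^h$ from Section \ref{section:lambdamin}, I introduce the symmetric bilinear form $\langle f,g\rangle_E:=\sum_{h\in H}\langle\mathcal{I}^h,f\rangle\langle\mathcal{I}^h,g\rangle$, so that $E_2(f)=\langle f,f\rangle_E$ and, since $\langle\mathcal{I}^h,\chi_r\rangle=\#(S_r\cap h_{in})-\#(S_r\cap h_{out})$, one has $\langle\chi_r,\chi_r\rangle_E=e(S_r)$ and $\langle\chi_U,\chi_U\rangle_E=e(U)$; and the degree-weighted inner product $(f,g):=\sum_{i\in V}\deg(i)f(i)g(i)$, so that $\|f\|_2^2=(f,f)$ with $(\chi_r,\chi_r)=\vol(S_r)$ and $(\chi_U,\chi_U)=\vol(U)$.

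Next I would carry out two short computations, both driven by the covering identity. Expanding $\sum_r E_2(f_r)$ and using $\sum_r\langle\chi_r,\chi_U\rangle_E=\langle l\chi_U,\chi_U\rangle_E=l\,e(U)$ gives
$$\sum_{r=1}^k E_2(f_r)=\sum_{r=1}^k e(S_r)-\frac{l^2}{k}\,e(U),$$
so that $k\sum_r E_2(f_r)$ is exactly the numerator in the statement. The parallel computation for the denominators, using $\sum_r\vol(S_r)=l\vol(U)$ and $\sum_r(\chi_r,\chi_U)=l\vol(U)$, yields
$$\sum_{r=1}^k \|f_r\|_2^2=l\vol(U)-\frac{l^2}{k}\vol(U)=\frac{l(k-l)}{k}\,\vol(U).$$

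Finally I would combine these. Each $f_r$ is nonzero, since for $i\in U$ its value is $1-\frac{l}{k}$ or $-\frac{l}{k}$, both nonzero when $1\le l\le k-1$; hence by Corollary \ref{cor:minmaxRQ} it satisfies $\lambda_1\|f_r\|_2^2\le E_2(f_r)\le\lambda_n\|f_r\|_2^2$. Summing over $r$, multiplying by $k$, substituting the two identities above, and dividing by $l(k-l)\vol(U)>0$ gives the desired two-sided bound. The only point requiring care is the degenerate range of $l$: the statement presupposes $1\le l\le k-1$, which is precisely what keeps the denominator positive and the $f_r$ from vanishing. There is no serious obstacle here beyond organizing the cross-term bookkeeping; the substantive step is recognizing that the centered indicators $f_r=\chi_r-\frac{l}{k}\chi_U$ simultaneously generate the coefficients $k$ and $l^2$ in the numerator and the factor $(k-l)l$ in the denominator.
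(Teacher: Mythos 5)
Your proof is correct, and it is a genuinely streamlined variant of the paper's argument. Both proofs hinge on the same two ingredients — test functions that are constant on $S_r$ and constant off $S_r$, combined via the mediant inequality $\lambda_1\le \bigl(\sum_r E_2(f_r)\bigr)/\bigl(\sum_r \|f_r\|_2^2\bigr)\le\lambda_n$ and the covering identity $\sum_r \chi_r = l\,\chi_U$ — but they differ in two substantive ways. First, the paper works with the full two-parameter family $f_r = t\chi_r + s(\chi_V-\chi_r)$, shows $\eta(t,s)\in[\lambda_1,\lambda_n]$ for all $(t,s)$, and then carries out an optimization to identify the extreme values of $\eta$, one of which is the quantity in the theorem; your choice $f_r=\chi_r-\frac{l}{k}\chi_U$ (i.e.\ $t=1-\frac lk$, $s=-\frac lk$) is precisely the optimizer of that family, so by plugging it in directly you make the ratio equal the target quantity on the nose and skip the optimization entirely. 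Second, the paper proves the result only for $(k,l)$-covers ($U=V$) and then extends to general $(k,l)$-families by restricting to the sub-hypergraph on $U=S_1\cup\ldots\cup S_k$ and invoking the eigenvalue comparison \cite[Lemma 2.21]{MulasZhang} ($\lambda_1(\Gamma)\le\lambda_1(\Gamma')$, $\lambda_n(\Gamma)\ge\lambda_{\max}(\Gamma')$); because you center against $\chi_U$ rather than the constant function, your computation handles arbitrary families in one stroke, with no sub-hypergraph reduction and no external lemma. What your route buys is brevity and self-containedness (only Corollary \ref{cor:minmaxRQ} is needed); what the paper's route buys is the extra information that both extremes of $\eta(t,s)$ — the theorem's quantity and $e(V)/\vol(V)$ — lie in $[\lambda_1,\lambda_n]$, and the knowledge that no better bound can be extracted from this family of test functions. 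Two minor remarks: the observation $\sum_r f_r=0$ is never actually used in your argument, and the pointwise nonvanishing of each $f_r$ is not needed either — the inequality $\lambda_1\|f_r\|_2^2\le E_2(f_r)\le\lambda_n\|f_r\|_2^2$ holds trivially when $f_r=0$, so all you need is $\sum_r\|f_r\|_2^2=\frac{l(k-l)}{k}\vol(U)>0$, i.e.\ $U\ne\emptyset$ and $1\le l\le k-1$, exactly the nondegeneracy implicit in the statement.
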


\begin{proof}
We first focus on the case that $(S_1,\ldots,S_k)$ is a $(k,l)$-cover. 
 For $r\in \{1,\ldots,k\}$, define a function $f_r:V\to\R$ by
$$f_r(i)=\begin{cases}t,&\text{ if }i\in S_r,\\
s,&\text{ if }i\not\in S_r.\end{cases}$$
Then 
\begin{align*}
&\left(\sum_{j\in h_{in}} f_r(j)-\sum_{j'\in h_{out}} f_r(j')\right)^2
\\=~&
(t\#(S_r\cap h_{in})-t\#(S_r\cap h_{out})+s(\#h_{in}\setminus S_r)-s(\#h_{out}\setminus S_r))^2
\\=~&
(t\#(S_r\cap h_{in})-t\#(S_r\cap h_{out})+s\#h_{in}-s\#(S_r\cap h_{in})-s\#h_{out}+s\#(S_r\cap h_{out}))^2
\\=~& ((t-s)(\#(S_r\cap h_{in})-\#(S_r\cap h_{out}))+s(\#h_{in}-\#h_{out}))^2
\\=~&(t-s)^2(\#(S_r\cap h_{in})-\#(S_r\cap h_{out}))^2+s^2(\#h_{in}-\#h_{out})^2
\\&+2(t-s)s(\#(S_r\cap h_{in})-\#(S_r\cap h_{out})(\#h_{in}-\#h_{out}).
\end{align*}

Consequently,
\begin{align*}
&\sum_{r=1}^k\sum_{h\in H} \left(\sum_{j\in h_{in}} f_r(j)-\sum_{j'\in h_{out}} f_r(j')\right)^2
\\=~&(t-s)^2\sum_{r=1}^k\sum_{h\in H}(\#(S_r\cap h_{in})-\#(S_r\cap h_{out}))^2+s^2k\sum_{h\in H}(\#h_{in}-\#h_{out})^2
\\&+2(t-s)s\sum_{h\in H}\sum_{r=1}^k(\#(S_r\cap h_{in})-\#(S_r\cap h_{out})(\#h_{in}-\#h_{out})
\\=~&(t-s)^2\sum_{r=1}^k\sum_{h\in H}(\#(S_r\cap h_{in})-\#(S_r\cap h_{out}))^2+(s^2k+2(t-s)sl)\sum_{h\in H}(\#h_{in}-\#h_{out})^2
\\=~&(t-s)^2\sum_{r=1}^k e(S_r)+(s^2k+2(t-s)sl)e(V),
\end{align*}
where we have used the equality 
$$\sum_{r=1}^k(\#(S_r\cap h_{in})-\#(S_r\cap h_{out})=l(\#h_{in}-\#h_{out}),$$ since each vertex in $h$ is covered  $l$ times by $S_1,\ldots,S_k$ ($l\le k$). \newline

Also, $\sum_{i\in V} \deg(i)f_r(i)^2=\vol(S_r)t^2+(\vol(V)-\vol(S_r))s^2$ and $\sum_{r=1}^k\vol(S_r)=l\vol(V)$. Hence,
$$\sum_{r=1}^k\sum_{i\in V} \deg(i)f_r(i)^2=l\vol(V)(t^2-s^2)+k\vol(V)s^2.$$
By the basic inequality
$$
\lambda_n\ge \frac{\sum_{r=1}^k\sum_{h\in H} \left(\sum_{j\in h_{in}} f_r(j)-\sum_{j'\in h_{out}} f_r(j')\right)^2}{\sum_{r=1}^k\sum_{i\in V} \deg(i)f_r(i)^2} \ge\lambda_1,
$$
we have 
$$
\eta(t,s):=\frac{(t-s)^2\sum_{r=1}^k e(S_r)+(s^2k+2(t-s)sl)e(V)}{\vol(V)(l(t^2-s^2)+k s^2)}
\in[\lambda_1,\lambda_n].$$
We can verify that the minimum and maximum of the above quantity belong to 
$$\left\{\frac{\sum_{r=1}^k e(S_r)}{\vol(V)}\frac{k}{l(k-l)}-\frac{e(V)}{\vol(V)}\frac{l}{k-l},\;\; 
\frac{e(V)}{\vol(V)}\right\}.$$
To see this, we make the following observations.
\begin{enumerate}
\item For $s=0$, the  quantity $\eta(t,s)$ is $\frac{\sum_{r=1}^ke_r(S_r)}{l\vol(V)}$.
\item For $s\ne 0$, the  quantity $\eta(t,s)$ is
$$
\frac{(\frac ts-1)^2\frac1l\sum_{r=1}^k e(S_r)+(\frac kl+2(\frac ts-1))e(V)}{\vol(V)((\frac ts)^2-1+\frac kl)}=\frac{e(V)}{\vol(V)}+\frac{(\frac ts-1)^2}{(\frac ts)^2-1+\frac kl}\frac{\frac1l\sum_{r=1}^k e(S_r)-e(V)}{\vol(V)}.
$$
In fact, since $\max\limits_{(t,s)\ne (0,0)}\frac{(\frac ts-1)^2}{(\frac ts)^2-1+\frac kl}=\frac{k}{k-l}$ and $\min\limits_{(t,s)\ne (0,0)}\frac{(\frac ts-1)^2}{(\frac ts)^2-1+\frac kl}=0$, $k\ge l$, %the maximum and minimum of the quantity is
$$\{\max\limits_{s\ne0,t\in\mathbb{R}}\eta(t,s),\min\limits_{s\ne0,t\in\mathbb{R}}\eta(t,s)\} = \left\{\frac{e(V)}{\vol(V)},\; \frac{e(V)}{\vol(V)}+ \frac{k}{k-l}\frac{\frac1l\sum_{r=1}^k e(S_r)-e(V)}{\vol(V)}\right\}.$$
\end{enumerate}
The proof of the claim is then completed by observing that
$$\frac{\sum_{r=1}^ke_r(S_r)}{l\vol(V)}=\frac lk \frac{e(V)}{\vol(V)}+(1-\frac lk)\left(\frac{e(V)}{\vol(V)}+ \frac{k}{k-l}\frac{\frac1l\sum_{r=1}^k e(S_r)-e(V)}{\vol(V)}\right).$$

For a general $(k,l)$-family, we can consider $V':=S_1\cup\ldots\cup S_k$ and $H':=\{h\cap V':h\in H\}$. Then, $\Gamma':=(V',H')$ is a sub-hypergraph of $H$ restricted to $V'$. According to \cite[Lemma 2.21]{MulasZhang}, $\lambda_n(\Gamma)\ge \lambda_{\max}(\Gamma')$ and $\lambda_1(\Gamma)\le \lambda_1(\Gamma')$. Applying the case of the $(k,l)$-cover to $\Gamma'$, we complete the proof. 
\end{proof}
\begin{corollary}
$$
\lambda_1\leq \frac{k(e(S_1)+\ldots+ e(S_k))-l^2e(V)}{(k-l)l\vol(V)}\leq \lambda_n.
$$
\end{corollary}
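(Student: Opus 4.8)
The plan is to recognize the statement as the specialization of Theorem \ref{theo:kl} to a $(k,l)$-cover. By definition, a $(k,l)$-cover is a $(k,l)$-family $(S_1,\ldots,S_k)$ with the additional property that $S_1\cup\ldots\cup S_k=V$; hence the corollary should be read as the bound for an arbitrary $(k,l)$-cover, and the entire content is already contained in the theorem.

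First I would invoke Theorem \ref{theo:kl}, which provides, for every $(k,l)$-family, the two-sided estimate
\[
\lambda_1\leq \frac{k\cdot (e(S_1)+\ldots+ e(S_k))-l^2\cdot e(S_1\cup\ldots \cup S_k)}{(k-l)\cdot l\cdot \vol(S_1\cup\ldots \cup S_k)} \leq \lambda_n.
\]
Next I would apply the covering condition $S_1\cup\ldots\cup S_k=V$, which yields the two identities $e(S_1\cup\ldots\cup S_k)=e(V)$ and $\vol(S_1\cup\ldots\cup S_k)=\vol(V)$. Substituting both into the displayed inequality collapses the subtracted term in the numerator to $l^2 e(V)$ and the volume factor in the denominator to $\vol(V)$, producing exactly the claimed bound.

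The only point requiring care is interpreting the hypothesis correctly: because in Theorem \ref{theo:kl} both $e$ and $\vol$ are already evaluated on the common set $S_1\cup\ldots\cup S_k$, replacing that set by $V$ under the covering assumption is immediate. I therefore expect no genuine obstacle; the proof is a one-line specialization, and the substantive work has been carried out in establishing Theorem \ref{theo:kl} itself.
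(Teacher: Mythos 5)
Your proposal is correct and matches the paper's intent exactly: the corollary is stated without a separate proof precisely because it is the immediate specialization of Theorem \ref{theo:kl} to a $(k,l)$-cover, where $S_1\cup\ldots\cup S_k=V$ turns $e(S_1\cup\ldots\cup S_k)$ into $e(V)$ and $\vol(S_1\cup\ldots\cup S_k)$ into $\vol(V)$. Indeed, the paper's proof of the theorem establishes the cover case first and then extends to general families, so your reading is exactly the right one.
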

%\begin{proof}If $k=1$, then $l=1$ and $S_1=V$. In this case, we can set $\frac{k-l^2}{l(k-l)}$ to be $1$ for simplicity, and get that
%\begin{equation*}
%    \eta(t,s)=\frac{k(e(S_1)+\ldots+ e(S_k))-l^2e(V)}{(k-l)l\vol(V)}
%\end{equation*}in the proof of Theorem \ref{theo:kl}.
%\end{proof}
\begin{remark}For a graph, $e(S)=|\partial(S)|$ and a $(2,1)$-cover is a standard $2$-cut. Theorem \ref{theo:kl} shows that
$$\lambda_n\ge 4\max\limits_{S\subset V}\frac{|\partial(S)|}{\vol(V)},$$  where $2\max\limits_{S\subset V}\frac{|\partial(S)|}{\vol(V)}$ is the normalized max-cut ratio.\newline

Also, Theorem \ref{theo:kl} applied to $(2,1)$-families for a graph implies that
\begin{align*}
    \lambda_n&\ge \max\limits_{S_1\cap S_2=\emptyset}\frac{2(e(S_1)+e(S_2))-e(S_1\cup S_2)}{\vol(S_1)+\vol(S_2)}\\
    &=\max\limits_{S_1\cap S_2=\emptyset}\frac{4|E(S_1,S_2)|+|\partial(S_1\cup S_2)|}{\vol(S_1)+\vol(S_2)}
\\&\ge 2\max\limits_{S_1\cap S_2=\emptyset}\frac{2|E(S_1,S_2)|}{\vol(S_1)+\vol(S_2)},
\end{align*}
where $\max\limits_{S_1\cap S_2=\emptyset}\frac{2|E(S_1,S_2)|}{\vol(S_1)+\vol(S_2)}$ is exactly the dual Cheeger constant \cite{dual}. \newline

Interestingly, applying Theorem \ref{theo:kl} to  a $(k,1)$-cover of a graph, we get 
$$\lambda_n\ge \frac{k}{k-1}\cdot \frac{\max\limits_{\text{partition }(V_1,\ldots,V_k)}\sum_{i=1}^k|\partial V_i|}{\vol(V)}$$
which relates to the max $k$-cut problem. 
\end{remark}
\subsection{General partitions}\label{section:generalpartitions}

\begin{lemma}\label{lemma:p-lambda-1-n}
We have
\begin{equation}\label{eq:lambda-n-lower}
\lambda_n(\Delta_p)\ge \max\limits_{t\in\R, c\in[0,1]}\max\limits_{\text{partition }(V_1,\ldots,V_k)}\frac{c^{p-1}|t+1|^p\sum_{r=1}^k e_p(V_r)-(\frac{c}{1-c})^{p-1}ke_p(V)}{\vol(V)(|t|^p+k-1)} 
\end{equation}
and
\begin{equation}\label{eq:lambda-1-upper}
\lambda_1(\Delta_p)\le \min\limits_{t\in\R}\min\limits_{\text{partition }(V_1,\ldots,V_k)}\frac{|t+1|^p\sum_{r=1}^k e_p(V_r)+ke_p(V)}{\vol(V)(|t|^p+k-1)}.
\end{equation}
\end{lemma}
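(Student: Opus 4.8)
\section*{Proof proposal for Lemma \ref{lemma:p-lambda-1-n}}

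The plan is to test the Rayleigh quotient on the family of functions attached to the partition, exactly as in the proof of Theorem \ref{theo:kl}, but to replace the exact expansion of the numerator (which is special to $p=2$) by convexity estimates. Fix a partition $(V_1,\dots,V_k)$ and $t\in\R$, and for each $r$ define $f_r\in C(V)$ by $f_r(i)=t$ if $i\in V_r$ and $f_r(i)=-1$ otherwise; equivalently $f_r=(t+1)\chi_{V_r}-\chi_V$. Writing $a_r(h):=\#(V_r\cap h_{in})-\#(V_r\cap h_{out})$ and $A(h):=\#h_{in}-\#h_{out}$, one has $\sum_{j\in h_{in}}f_r(j)-\sum_{j\in h_{out}}f_r(j)=(t+1)a_r(h)-A(h)$, while the partition property yields the key identity $\sum_{r=1}^k a_r(h)=A(h)$ for every $h$. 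Moreover the denominators add up exactly: $\sum_{r=1}^k\|f_r\|_p^p=\vol(V)(|t|^p+k-1)$. Since each $\RQ_p(f_r)$ lies in $[\lambda_1,\lambda_n]$ by Corollary \ref{cor:minmaxRQ}, the weighted average satisfies $\lambda_1\le\frac{\sum_r E_p(f_r)}{\sum_r\|f_r\|_p^p}\le\lambda_n$, so everything reduces to estimating $\sum_r E_p(f_r)=\sum_r\sum_h|(t+1)a_r(h)-A(h)|^p$ from below (for $\lambda_n$) and from above (for $\lambda_1$).

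For the lower bound \eqref{eq:lambda-n-lower} I would use the elementary convexity inequality: for $p\ge1$ and $c\in(0,1)$, applying convexity of $x\mapsto|x|^p$ to $|a|^p=|c\cdot\tfrac{a+b}{c}+(1-c)\cdot\tfrac{-b}{1-c}|^p$ gives
\begin{equation*}
|a+b|^p\ge c^{p-1}|a|^p-\Bigl(\tfrac{c}{1-c}\Bigr)^{p-1}|b|^p .
\end{equation*}
Applying this with $a=(t+1)a_r(h)$, $b=-A(h)$, summing over $h$ and $r$, and using $\sum_h|a_r(h)|^p=e_p(V_r)$ together with $\sum_{r=1}^k\sum_h|A(h)|^p=k\,e_p(V)$, yields
\begin{equation*}
\sum_r E_p(f_r)\ge c^{p-1}|t+1|^p\sum_{r=1}^k e_p(V_r)-\Bigl(\tfrac{c}{1-c}\Bigr)^{p-1}k\,e_p(V).
\end{equation*}
Dividing by $\vol(V)(|t|^p+k-1)$ and taking the supremum over $t\in\R$, $c\in[0,1]$ and over all partitions produces exactly \eqref{eq:lambda-n-lower}, the endpoints $c\in\{0,1\}$ being harmless.

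The upper bound \eqref{eq:lambda-1-upper} is the delicate half, and I expect it to be the main obstacle. Here one again wants $\sum_r E_p(f_r)\le|t+1|^p\sum_{r=1}^k e_p(V_r)+k\,e_p(V)$, after which dividing by $\vol(V)(|t|^p+k-1)$ and minimizing over $t$ and over partitions would finish. The subtlety is that, unlike the lower bound, this does not follow from any single pointwise convexity estimate uniform in $t$: the termwise inequality $|(t+1)a_r-A|^p\le|t+1|^p|a_r|^p+|A|^p$ already fails for $t<-1$ (e.g.\ $p=2$, $a_r=A$, $t=-2$), and the crude bound $|x+y|^p\le2^{p-1}(|x|^p+|y|^p)$ only gives the claim up to the spurious factor $2^{p-1}$. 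The plan is therefore to exploit the partition identity $\sum_r a_r(h)=A(h)$ to recover the exact cancellation that Theorem \ref{theo:kl} uses for $p=2$, and to combine it with the bounds $\lambda_1\vol(V_r)\le e_p(V_r)$ and $\lambda_1\vol(V)\le e_p(V)$ from Lemma \ref{lemma:ep}. These bounds already force the right-hand side of \eqref{eq:lambda-1-upper} to equal $e_p(V)/\vol(V)\ge\lambda_1$ at $t=-1$ and to tend to $\sum_r e_p(V_r)/\vol(V)\ge\lambda_1$ as $t\to\pm\infty$, so the entire difficulty is to show that the interior minimum over $t$ of the one-variable function $\frac{|t+1|^p\sum_r e_p(V_r)+k\,e_p(V)}{\vol(V)(|t|^p+k-1)}$ can never drop below $\lambda_1$. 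Making this interior control rigorous for every $p\ge1$ is the crux of the argument.
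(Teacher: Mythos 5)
Your proof of \eqref{eq:lambda-n-lower} is correct and is the paper's own argument: the same test functions $f_r=(t+1)\chi_{V_r}-\chi_V$, the same convexity estimate $|B-A|^p\ge c^{p-1}|B|^p-\bigl(\tfrac{c}{1-c}\bigr)^{p-1}|A|^p$, and the same mediant step. Nothing is missing in that half.

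For \eqref{eq:lambda-1-upper}, the step you declined to take is exactly the step the paper takes: its proof asserts, for \emph{every} $t\in\R$, the termwise bound $|(t+1)a_r(h)-A(h)|^p\le|t+1|^p|a_r(h)|^p+|A(h)|^p$, where $a_r(h)=\#(V_r\cap h_{in})-\#(V_r\cap h_{out})$ and $A(h)=\#h_{in}-\#h_{out}$. Your counterexample ($p=2$, $a_r=A$, $t=-2$) refutes this bound --- for $p>1$ it fails whenever $(t+1)a_r(h)$ and $A(h)$ are nonzero of opposite signs --- so the paper's proof is broken precisely at your ``crux''. Moreover, the gap cannot be filled, because \eqref{eq:lambda-1-upper} itself is false for $p>1$. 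Take $V=\{1,2,3\}$, $H=\{h_1,h_2,h_3\}$ with $h_r:=(V\setminus\{r\},\{r\})$; then every vertex has degree $3$, $\vol(V)=9$, and, writing $\sigma:=f(1)+f(2)+f(3)$, hyperedge $h_r$ contributes $\sigma-2f(r)$, so for $p=2$,
\begin{equation*}
\RQ_2(f)=\frac{\sum_{r=1}^{3}\bigl(\sigma-2f(r)\bigr)^2}{3\sum_{i=1}^{3}f(i)^2}
=\frac{4\sum_{i=1}^{3}f(i)^2-\sigma^2}{3\sum_{i=1}^{3}f(i)^2}\ge\frac{1}{3}
\end{equation*}
by Cauchy--Schwarz, with equality for constant $f$; hence $\lambda_1(\Delta_2)=\tfrac{1}{3}$. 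For the singleton partition ($k=3$) one has $e_2(V_r)=3$ for each $r$, $e_2(V)=3$, and the right-hand side of \eqref{eq:lambda-1-upper} at $t=-\sqrt{2}$ equals
\begin{equation*}
\frac{9(t+1)^2+9}{9(t^2+2)}=\frac{(1-\sqrt{2})^2+1}{4}=1-\frac{\sqrt{2}}{2}\approx 0.293<\frac{1}{3}=\lambda_1(\Delta_2).
\end{equation*}
So the interior control you were seeking genuinely fails for $t<-1$; only your endpoint observations ($t=-1$ and $t\to\pm\infty$, via Lemma \ref{lemma:ep}) are true, and those are what actually justify the paper's subsequent unnumbered Corollary listing $e_p(V)/\vol(V)$ and $\sum_{r}e_p(V_r)/\vol(V)$ as upper bounds for $\lambda_1(\Delta_p)$. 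What else survives: the case $p=1$ of \eqref{eq:lambda-1-upper} (the termwise bound is then the triangle inequality), which is all that Corollary \ref{cor:1-Lap-estimate} uses; the case $p=2$ with $t$ restricted to $[-1,\infty)$, where expanding the square and using $\sum_r a_r(h)=A(h)$ makes the cross term $-2(t+1)e_2(V)\le 0$, as in Theorem \ref{theo:kl}; and, for general $p$ and unrestricted $t$, only the weakened inequality carrying your factor $2^{p-1}$. In short: your lower-bound half matches the paper, and your refusal to complete the upper-bound half is vindicated --- the obstacle you identified is an actual error in the paper's lemma, not a gap in your argument.
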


\begin{proof}Given a partition $(V_1,\ldots,V_k)$ of $V$,  given $r\in \{1,\ldots,k\}$, define a function $f_r:V\to\R$ by
$$f_r(i):=\begin{cases}t&\text{ if }i\in V_r,\\
-1&\text{ if }i\not\in V_r.\end{cases}$$
Then,
\begin{align}
&\left|\sum_{j\in h_{in}} f_r(j)-\sum_{j'\in h_{out}} f_r(j')\right|^p \notag
\\=~&
\left|t\#(V_r\cap h_{in})-t\#(V_r\cap h_{out})-\#h_{in}\setminus V_r+\#h_{out}\setminus V_r\right|^p \notag
\\=~&|(t+1)(\#(V_r\cap h_{in})-\#(V_r\cap h_{out}))-(\#h_{in}-\#h_{out})|^p \label{eq:(t+1)}
\\ \le~& |t+1|^p|\#(V_r\cap h_{in})-\#(V_r\cap h_{out})|^p+|\#h_{in}-\#h_{out}|^p. \notag
\end{align}
Consequently,
\begin{align*}
&\sum_{r=1}^k\sum_{h\in H} \left|\sum_{j\in h_{in}} f_r(j)-\sum_{j'\in h_{out}} f_r(j')\right|^p
\\ \le~&\sum_{r=1}^k\sum_{h\in H}|t+1|^p|\#(V_r\cap h_{in})-\#(V_r\cap h_{out})|^p+|\#h_{in}-\#h_{out}|^p
\\=~&|t+1|^p\sum_{r=1}^k e_p(V_r)+ke_p(V).
\end{align*}
Also, we have
$$\sum_{r=1}^k\sum_{i\in V} \deg(i)|f_r(i)|^p=\vol(V)(|t|^p-1)+k\vol(V).$$
Now, note that
$$\lambda_1\le \frac{\sum_{r=1}^k\sum_{h\in H} \left|\sum_{j\in h_{in}} f_r(j)-\sum_{j'\in h_{out}} f_r(j')\right|^p}{\sum_{r=1}^k\sum_{i\in V} \deg(i)|f_r(i)|^p}\le \frac{|t+1|^p\sum_{r=1}^k e_p(V_r)+ke_p(V)}{\vol(V)(|t|^p+k-1)}.$$

Next, we give a lower bound for \eqref{eq:(t+1)}. By the convexity of $t\mapsto |t|^p$, we have
$$c\left|\frac{1}{c}(B-A)\right|^p+(1-c)\left|\frac{1}{1-c}A\right|^p \ge |B|^p \qquad \forall A,B\in \R,\,0<c<1,$$ which implies  $|B-A|^p\ge c^{p-1}|B|^p-(\frac{c}{1-c})^{p-1}|A|^p$. Thus, 
\begin{align*}
&|(t+1)(\#(V_r\cap h_{in})-\#(V_r\cap h_{out}))-(\#h_{in}-\#h_{out})|^p
\\ \ge~& c^{p-1}|(t+1)(\#(V_r\cap h_{in})-\#(V_r\cap h_{out}))|^p-(\frac{c}{1-c})^{p-1}|(\#h_{in}-\#h_{out})|^p.
\end{align*}
Finally, the same method gives
$$\frac{c^{p-1}|t+1|^p\sum_{r=1}^k e_p(V_r)-(\frac{c}{1-c})^{p-1}ke_p(V)}{\vol(V)(|t|^p+k-1)}\le\lambda_n.$$
\end{proof}
\begin{corollary}
The following constants are smaller than or equal to $\lambda_n(\Delta_p)$:
$$\frac{2(\frac k2)^{p-1}\sum_{r=1}^k e_p(V_r)-ke_p(V)}{((k-1)^{p}+k-1)\vol(V)},\;\frac{k\sum_{r=1}^k e_p(V_r)-\frac{k}{(k-1)^{p-1}}e_p(V)}{((k-1)^{p}+k-1)\vol(V)},\;\frac{2\sum_{r=1}^k e_p(V_r)-ke_p(V)}{k\vol(V)}.$$
\end{corollary}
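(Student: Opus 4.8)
The plan is to derive all three constants as special instances of the lower bound \eqref{eq:lambda-n-lower} established in Lemma \ref{lemma:p-lambda-1-n}. Since that inequality asserts that $\lambda_n(\Delta_p)$ dominates the displayed quantity after a double maximization over the free parameters $t\in\R$, $c\in[0,1]$ and over all partitions, it suffices to fix an arbitrary partition $(V_1,\ldots,V_k)$ and to substitute three judicious pairs $(t,c)$; for each fixed choice the outer maxima only help, so the resulting quantity is automatically bounded above by $\lambda_n(\Delta_p)$. Throughout I would abbreviate $S:=\sum_{r=1}^k e_p(V_r)\ge 0$ and $E:=e_p(V)$, so that \eqref{eq:lambda-n-lower} reads $\lambda_n(\Delta_p)\ge \frac{c^{p-1}|t+1|^p S-(\frac{c}{1-c})^{p-1}kE}{\vol(V)(|t|^p+k-1)}$ for every admissible $(t,c)$.

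For the third constant I would take $t=1$ and $c=\tfrac12$: then $|t+1|^p=2^p$, $|t|^p+k-1=k$, $c^{p-1}|t+1|^p=2^{1-p}2^p=2$ and $(\frac{c}{1-c})^{p-1}=1$, so the bound collapses to exactly $\frac{2S-kE}{k\,\vol(V)}$. For the second constant I would take $t=k-1$ and $c=\tfrac1k$: here $|t+1|^p=k^p$ and $|t|^p+k-1=(k-1)^p+k-1$, while $c^{p-1}k^p=k^{1-p}k^p=k$ and $(\frac{c}{1-c})^{p-1}k=(\frac{1}{k-1})^{p-1}k=\frac{k}{(k-1)^{p-1}}$, which reproduces the second displayed quantity verbatim.

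The first constant is the one that does not arise from an exact substitution. I would start again from $t=k-1$, $c=\tfrac12$, which yields $\frac{2^{1-p}k^p\,S-kE}{((k-1)^p+k-1)\vol(V)}\le\lambda_n(\Delta_p)$, and then slacken it: since $k\ge 2$ one has $2^{2-p}k^{p-1}\le 2^{1-p}k^p$ (equivalently $2\le k$), and because $S\ge 0$, replacing the coefficient $2^{1-p}k^p$ by the smaller $2^{2-p}k^{p-1}=2(\tfrac k2)^{p-1}$ only decreases the numerator while leaving the $-kE$ term and the denominator untouched. Hence $\frac{2(\frac k2)^{p-1}S-kE}{((k-1)^p+k-1)\vol(V)}\le \frac{2^{1-p}k^p S-kE}{((k-1)^p+k-1)\vol(V)}\le\lambda_n(\Delta_p)$, which is the first claimed estimate.

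The routine work is just the evaluation of $c^{p-1}|t+1|^p$ and $(\frac{c}{1-c})^{p-1}$ at the three parameter pairs. The main point to get right — and the only genuine subtlety — is recognizing that the first listed quantity is not a direct evaluation of \eqref{eq:lambda-n-lower} but a one-sided relaxation of the $(t,c)=(k-1,\tfrac12)$ bound, legitimate precisely because $\sum_{r=1}^k e_p(V_r)\ge 0$ and $k\ge 2$; everything else is bookkeeping inside Lemma \ref{lemma:p-lambda-1-n}.
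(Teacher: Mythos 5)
Your proof is correct and follows the paper's approach exactly: the same three substitutions $(t,c)=(1,\tfrac12)$, $(k-1,\tfrac1k)$, $(k-1,\tfrac12)$ into \eqref{eq:lambda-n-lower}. In fact your treatment of the first constant is more careful than the paper's, which asserts that the substitution $(t,c)=(k-1,\tfrac12)$ yields it directly — but that substitution actually produces the coefficient $c^{p-1}|t+1|^p=2^{1-p}k^p=2(\tfrac k2)^{p}$ rather than the stated $2(\tfrac k2)^{p-1}$, so your extra relaxation step (legitimate precisely because $\sum_{r=1}^k e_p(V_r)\ge 0$ and $k\ge 2$) is exactly what is needed to pass from the lemma to the constant as printed.
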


\begin{proof}
Taking $t=k-1$ and $c=\frac12$ in \eqref{eq:lambda-n-lower}, we have the first. 

Taking $t=k-1$ and $c=\frac1k$ in \eqref{eq:lambda-n-lower}, we get the middle one. 

Taking $t=1$ and $c=\frac12$ in \eqref{eq:lambda-n-lower}, we obtain the last one. 
\end{proof}

\begin{corollary}
The following constants are larger than or equal to $\lambda_1(\Delta_p)$:
$$ \frac{e_p(V)}{\vol(V)},\; \frac{\sum_{r=1}^ke_p(V_r)}{\vol(V)}\;.$$
\end{corollary}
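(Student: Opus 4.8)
The plan is to read off both bounds directly from inequality \eqref{eq:lambda-1-upper} of Lemma \ref{lemma:p-lambda-1-n}, which holds for every real $t$ and every partition $(V_1,\ldots,V_k)$, by making two convenient choices of the free parameter $t$. Both constants are therefore immediate corollaries; there is no substantive difficulty beyond checking that the chosen values of $t$ produce exactly the claimed expressions.

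For the first constant I would substitute $t=-1$ into \eqref{eq:lambda-1-upper}. Then $|t+1|^p=0$ and $|t|^p=1$, so the right-hand side collapses to
$$\frac{0\cdot\sum_{r=1}^k e_p(V_r)+k\,e_p(V)}{\vol(V)(1+k-1)}=\frac{k\,e_p(V)}{k\,\vol(V)}=\frac{e_p(V)}{\vol(V)},$$
which yields $\lambda_1(\Delta_p)\le e_p(V)/\vol(V)$ for any partition. Conceptually this is just the statement that the constant test function $f\equiv-1$, which each $f_r$ reduces to when $t=-1$, has Rayleigh quotient $e_p(V)/\vol(V)$, so the bound is independent of $k$.

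For the second constant I would fix a partition and let $t\to+\infty$ in \eqref{eq:lambda-1-upper}. Since
$$\frac{|t+1|^p\sum_{r=1}^k e_p(V_r)+k\,e_p(V)}{\vol(V)(|t|^p+k-1)}\xrightarrow[t\to+\infty]{}\frac{\sum_{r=1}^k e_p(V_r)}{\vol(V)}$$
(the ratio $|t+1|^p/|t|^p\to1$ while the constant terms become negligible), and since $\lambda_1\le(\cdots)$ holds for every $t$, passing to the limit gives $\lambda_1(\Delta_p)\le\sum_{r=1}^k e_p(V_r)/\vol(V)$. A self-contained alternative avoiding the limit is the mediant inequality: because $(V_1,\ldots,V_k)$ is a partition we have $\vol(V)=\sum_r\vol(V_r)$, so $\sum_r e_p(V_r)/\vol(V)$ is a weighted average of the ratios $e_p(V_r)/\vol(V_r)$ and hence is $\ge\min_r e_p(V_r)/\vol(V_r)\ge\lambda_1$, the last step being Lemma \ref{lemma:ep}.

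The only point requiring any care is the limit argument, namely that one is entitled to pass to $t\to+\infty$ in an inequality valid for all $t$; this is immediate since $\lambda_1$ is a fixed constant. Accordingly I would present the $t=-1$ substitution for the first bound and the $t\to+\infty$ limit for the second, mentioning the mediant inequality as a cross-check that confirms the second bound without invoking Lemma \ref{lemma:p-lambda-1-n} at all.
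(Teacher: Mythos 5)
Your proof is correct and follows exactly the paper's own route: the paper likewise obtains the first constant by setting $t=-1$ in \eqref{eq:lambda-1-upper} and the second by letting $t\to\infty$, and your passage to the limit is justified just as you say, since the inequality holds for every fixed $t$. Your mediant-inequality cross-check via Lemma \ref{lemma:ep} is a valid and slightly more elementary alternative for the second bound, but it does not change the substance of the argument.
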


\begin{proof}
Taking $t=-1$ in \eqref{eq:lambda-1-upper}, we get the first constant. Letting $t\to\infty$ in \eqref{eq:lambda-1-upper}, we obtain the second one.
\end{proof}

\begin{corollary}\label{cor:1-Lap-estimate}
$$
\lambda_n(\Delta_1)\ge \max\limits_{\text{partition }(V_1,\ldots,V_k)}\frac{ \sum_{r=1}^k e_1(V_r) }{\vol(V) }\;\;\text{ and }\;\; \lambda_1(\Delta_1)\le \min\limits_{\text{partition }(V_1,\ldots,V_k)}\frac{ \sum_{r=1}^k e_1(V_r) }{\vol(V) }
$$
\end{corollary}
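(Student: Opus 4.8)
The plan is to read off both inequalities from Lemma~\ref{lemma:p-lambda-1-n} by setting $p=1$ and then letting the free parameter $t$ tend to $+\infty$. The point is that at $p=1$ the exponents $c^{p-1}$ and $\left(\tfrac{c}{1-c}\right)^{p-1}$ in \eqref{eq:lambda-n-lower} both equal $1$, so the $c$-dependence disappears and, for every partition $(V_1,\dots,V_k)$ and every $t\in\R$, the bounds collapse to
\[
\frac{|t+1|\sum_{r=1}^k e_1(V_r)-k\,e_1(V)}{\vol(V)(|t|+k-1)}\le \lambda_n(\Delta_1)
\qquad\text{and}\qquad
\lambda_1(\Delta_1)\le \frac{|t+1|\sum_{r=1}^k e_1(V_r)+k\,e_1(V)}{\vol(V)(|t|+k-1)}.
\]
These are legitimate, since at $p=1$ the only inequality used in the proof of Lemma~\ref{lemma:p-lambda-1-n} is the ordinary triangle inequality.

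Next I would fix a partition and send $t\to+\infty$ in each ratio. In both, the numerator is asymptotic to $t\sum_{r}e_1(V_r)$ and the denominator to $t\,\vol(V)$, so the $\pm k\,e_1(V)$ terms and the additive $k-1$ are washed out and each ratio converges to $\frac{\sum_{r}e_1(V_r)}{\vol(V)}$. Since $\lambda_n(\Delta_1)$ and $\lambda_1(\Delta_1)$ are fixed numbers obeying the respective inequality for every finite $t$, the inequalities survive in the limit, giving
\[
\frac{\sum_{r=1}^k e_1(V_r)}{\vol(V)}\le \lambda_n(\Delta_1)
\qquad\text{and}\qquad
\lambda_1(\Delta_1)\le \frac{\sum_{r=1}^k e_1(V_r)}{\vol(V)}.
\]
Maximizing over partitions in the first and minimizing in the second then yields the two claimed bounds.

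The only genuinely delicate point is that the target value is attained only in the limit $t\to+\infty$, not at a finite $t$; so I must invoke that an inequality $\lambda\le g(t)$ (resp.\ $\lambda\ge g(t)$) holding for all $t$ passes to $\lambda\le\lim_{t\to\infty}g(t)$ (resp.\ $\ge$) because $\lambda$ is constant and $g$ converges. I expect no other obstacle. Indeed, for the $\lambda_1$ half this is already on record: $\frac{\sum_r e_p(V_r)}{\vol(V)}$ is precisely the second constant in the corollary immediately preceding this one (obtained there by letting $t\to\infty$ in \eqref{eq:lambda-1-upper}), so specializing to $p=1$ and minimizing over partitions suffices.

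As a sanity check and an even shorter alternative, I would note that both bounds also drop out of Lemma~\ref{lemma:ep}: that lemma gives $\frac{e_1(V_r)}{\vol(V_r)}\in[\lambda_1,\lambda_n]$ for every block $V_r$, so writing $e_1(V_r)=\frac{e_1(V_r)}{\vol(V_r)}\,\vol(V_r)$, summing over $r$, and using $\sum_r\vol(V_r)=\vol(V)$ sandwiches $\frac{\sum_r e_1(V_r)}{\vol(V)}$ between $\lambda_1$ and $\lambda_n$. This route uses neither the limit nor the restriction $p=1$, and I would keep it in reserve to corroborate the computation above.
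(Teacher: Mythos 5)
Your main argument is correct and is essentially the paper's own proof: both specialize Lemma~\ref{lemma:p-lambda-1-n} to $p=1$ (where the factors $c^{p-1}$ and $\left(\tfrac{c}{1-c}\right)^{p-1}$ become $1$), realize the supremum over $t$ as the limit $t\to\infty$, and then optimize over partitions. Your reserve argument via Lemma~\ref{lemma:ep} is also valid and is worth noting: writing $\frac{\sum_{r} e_p(V_r)}{\vol(V)}=\sum_{r}\frac{\vol(V_r)}{\vol(V)}\cdot\frac{e_p(V_r)}{\vol(V_r)}$ exhibits the quantity as a convex combination of ratios each lying in $[\lambda_1,\lambda_n]$, which is shorter than the paper's route and proves the statement for every $p\geq 1$, not only $p=1$.
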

\begin{proof}
Taking $p=1$ in Lemma \ref{lemma:p-lambda-1-n}, we have 
\begin{align*}
   \lambda_n(\Delta_1)&\ge \max\limits_{t\in\R, c\in[0,1]}\max\limits_{\text{partition }(V_1,\ldots,V_k)}\frac{|t+1|\sum_{r=1}^k e(V_r)-ke(V)}{\vol(V)(|t|+k-1)}  
   \\&= \max\limits_{\text{partition }(V_1,\ldots,V_k)}\max_{t\in\R}\frac{|t+1|\sum_{r=1}^k e(V_r)-ke(V)}{\vol(V)(|t|+k-1)}  
   \\&=\max\limits_{\text{partition }(V_1,\ldots,V_k)}\frac{ \sum_{r=1}^k e(V_r) }{\vol(V) }
\end{align*}
and
\begin{align*}
    \lambda_1(\Delta_1)&\le \min\limits_{t\in\R}\min\limits_{\text{partition }(V_1,\ldots,V_k)}\frac{|t+1|\sum_{r=1}^k e_1(V_r)+ke_1(V)}{\vol(V)(|t|+k-1)}
    \\&=\min\left\{\frac{e_1(V)}{\vol(V)},\min\limits_{\text{partition }(V_1,\ldots,V_k)}\frac{ \sum_{r=1}^k e(V_r) }{\vol(V) } \right\}
    .
\end{align*}

\end{proof}
\section{Hyperedge partition problems}\label{section:hyperedge}
While in the previous section we have discussed vertex partition problems and their relation to $\Delta_p$, here we introduce the analogous hyperedge partition problems and their relations with $\Delta_p^H$. We start by defining, for each $\emptyset\neq \hat{H}\subset H$, a quantity $e_p(\hat{H})$ analogous to the quantity $e_p(S)$ defined for subsets of vertices. Namely, we let
\begin{equation*}
    e_p(\hat{H}):=\sum_{i\in V}\frac{1}{\deg (i)}|\#(\hat{H}\cap i_{in})-\#(\hat{H}\cap i_{out})|^p
\end{equation*}where, given $i\in V$, we let
\begin{equation*}
    i_{in}:=\#\{\text{ hyperedges in which $i$ is an input }\},\qquad i_{out}:=\#\{\text{ hyperedges in which $i$ is an output }\}.
    \end{equation*}We also define
    \begin{equation*}
        \eta_p(\hat{H}):=\frac{e_p(\hat{H})}{\#\hat{H}}.
    \end{equation*}
\begin{remark}Analogously to the vertex case, we can say that computing $e_p(\hat{H})$ means \emph{deleting} all hyperedges in $H\setminus \hat{H}$ and then computing $e_p$ on the hyperedge set of the sub-hypergraph obtained. It is therefore interesting to observe that, when $e_p$ is computed on $H$,
\begin{equation*}
    0\leq e_p(H)=\sum_{i\in V}\frac{1}{\deg (i)}\cdot |\# i_{in}-\# i_{out}|^p\leq \sum_{i\in V}\deg (i)^{p-1},
\end{equation*}where the first inequality is an equality if and only if each vertex is as often an input as an output, while the second one is an equality if and only if all vertices have the same sign for all hyperedges in which the graph is contained.\newline

Furthermore, if the sub-hypergraph $\hat{\Gamma}:=(V,\hat{H})$ of $\Gamma$ is bipartite, without loss of generality we can assume that each vertex is either always an input or always an output for each hyperedge in which it is contained. In this case,
\begin{equation*}
    e_p(\hat{H})=\sum_{i\in V}\frac{\deg_{\hat{\Gamma}}(i)^p}{\deg (i)}
\end{equation*}and, in particular, $\eta_p(\hat{H})$ coincides with the quantity in \cite[Definition 2.9]{Sharp}. Moreover, in the particular case when $\hat{H}=\{h\}$ is given by one single hyperedge, then
\begin{equation*}
    e_p(\{h\})=\eta_p(\{h\})\sum_{i\in h}\frac{1}{\deg i} \text{ for all }p.
\end{equation*}
\end{remark}
We now generalize \cite[Lemma 4.1]{Sharp} for all $p$. 

\begin{proposition}For all $p$, we have that
	 \begin{equation*}
	     \max_{\hat{\Gamma}=(V,\hat{H})\subset\Gamma \text{ bipartite}} \eta_p(\hat{H})\leq \mu_m,
	 \end{equation*}with equality if $p=1$.
	 \end{proposition}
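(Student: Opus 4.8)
The plan is to turn the purely combinatorial quantity $\eta_p(\hat H)$ into a value of the generalized Rayleigh quotient $\RQ_p$ by plugging in an explicit test function, and then to invoke Corollary \ref{cor:minmaxRQ}, which identifies $\mu_m$ with $\max_{\gamma\in C(H)}\RQ_p(\gamma)$. For the equality at $p=1$ I would then supply the reverse bound using the combinatorial formula \eqref{eq:sharp} from \cite{Sharp}.

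First I would take, for a nonempty $\hat H\subseteq H$, the characteristic function $\gamma=\chi_{\hat H}\in C(H)$, namely $\gamma(h)=1$ if $h\in\hat H$ and $\gamma(h)=0$ otherwise, and substitute it into the definition of $\RQ_p(\gamma)$ from Definition \ref{def:rq}. The denominator collapses to $\sum_{h\in H}|\gamma(h)|^p=\#\hat H$, while for each vertex $i$ the two inner sums become counts, $\sum_{h'\ni i\text{ as input}}\gamma(h')=\#(\hat H\cap i_{in})$ and $\sum_{h''\ni i\text{ as output}}\gamma(h'')=\#(\hat H\cap i_{out})$, so the numerator is verbatim $\sum_{i\in V}\frac1{\deg(i)}|\#(\hat H\cap i_{in})-\#(\hat H\cap i_{out})|^p=e_p(\hat H)$. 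Hence $\RQ_p(\chi_{\hat H})=e_p(\hat H)/\#\hat H=\eta_p(\hat H)$. Since $\mu_m=\max_{\gamma\in C(H)}\RQ_p(\gamma)$ by Corollary \ref{cor:minmaxRQ}, this yields $\eta_p(\hat H)\le\mu_m$ for every nonempty $\hat H$ (bipartiteness is not even needed here), and taking the maximum over bipartite sub-hypergraphs gives the asserted inequality.

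For the equality when $p=1$, I would combine the above with \eqref{eq:sharp}, which states $\mu_m=\max_{\gamma}\RQ_1(\gamma)=\max_{h\in H}\sum_{i\in h}\frac1{\deg(i)}$. The key observation is that a single hyperedge always yields a bipartite sub-hypergraph: assigning one color to $h_{in}$ and the other to $h_{out}$ separates every anti-oriented pair, so $\chi_{\sgn}((V,\{h\}))\le 2$. Evaluating $\eta_1$ on such a singleton gives $\eta_1(\{h\})=e_1(\{h\})=\sum_{i\in h}\frac1{\deg(i)}$, each vertex of $h$ contributing $1/\deg(i)$ (with the convention that a vertex which is simultaneously input and output contributes $0$). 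Choosing $h$ to realize the maximum in \eqref{eq:sharp} shows $\max_{\hat\Gamma\text{ bipartite}}\eta_1(\hat H)\ge\sum_{i\in h}\frac1{\deg(i)}=\mu_m$, and together with the first part this forces equality.

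The computation is essentially routine; the only point requiring care is the bookkeeping in the numerator of $\RQ_p$, i.e.\ confirming that the signed vertex counts produced by $\chi_{\hat H}$ match the defining expression of $e_p$ exactly, including the treatment of vertices lying in both orientation sets. The only genuinely conceptual step, which I expect to be the main (minor) obstacle, is recognizing for the equality case that singleton hyperedges are already bipartite and already saturate the bound coming from \eqref{eq:sharp}; no delicate optimization or co-area/layer-cake argument is needed, in contrast to what one might anticipate from a Cheeger-type equality.
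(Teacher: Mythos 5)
Your proposal is correct and follows essentially the same route as the paper: plug the characteristic function $\chi_{\hat H}$ into $\RQ_p$, identify $\mu_m=\max_\gamma \RQ_p(\gamma)$ via Corollary \ref{cor:minmaxRQ}, and for $p=1$ saturate the bound with singleton hyperedges using the identity $\mu_m=\max_{h\in H}\sum_{i\in h}\frac{1}{\deg(i)}$ from \cite{Sharp}. Your side observation that bipartiteness is not needed for the inequality is also consistent with the paper, which proves exactly this more general fact later as Lemma \ref{lemma:edgepartitions} by the same characteristic-function computation.
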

	 \begin{proof}Let $\gamma':H\rightarrow\mathbb{R}$ be $1$ on $\hat{H}$ and $0$ otherwise. Then, up to changing (without loss of generality) the orientations of the hyperedges,
		\begin{align*}
		\mu_m&=\max_{\gamma:H\rightarrow\mathbb{R}}\frac{\sum_{i\in V}\frac{1}{\deg (i)}\cdot \biggl(\sum_{h_{\text{in}}: i\text{ input}}\gamma(h_{\text{in}})-\sum_{h_{\text{out}}: i\text{ output}}\gamma(h_{\text{out}})\biggr)^p}{\sum_{h\in H}|\gamma(h)|^p}\\
					&\geq \frac{\sum_{i\in V}\frac{1}{\deg (i)}\cdot \biggl(\sum_{h_{\text{in}}: v\text{ input}}\gamma'(h_{\text{in}})-\sum_{h_{\text{out}}: v\text{ output}}\gamma'(h_{\text{out}})\biggr)^p}{\sum_{h\in H}|\gamma'(h)|^p}\\
					&\geq \frac{\sum_{i\in \hat{V}}\frac{1}{\deg (i)}\cdot \biggl(\sum_{h_{\text{in}}: i\text{ input}}\gamma'(h_{\text{in}})-\sum_{h_{\text{out}}: i\text{ output}}\gamma'(h_{\text{out}})\biggr)^p}{\sum_{h\in H}|\gamma'(h)|^p}\\
					&= \frac{\sum_{i\in \hat{V}}\frac{\deg_{\hat{\Gamma}}(i)^p}{\deg (i)}}{|\hat{H}|}=\eta_p(\hat{H}).				\end{align*}Since the above inequality is true for all $\hat{\Gamma}$, this proves the first claim.\newline
					
					If $p=1$, then 
					\begin{equation*}
					   \mu_m\geq \max_{\hat{\Gamma}=(V,\hat{H})\text{ bipartite}}\eta_p(\hat{H})\geq \max_{h\in H}\eta_p(\{h\})=Q,
					\end{equation*}where $Q$ is the Cheeger-like quantity defined in \cite{Sharp}. By \cite[Lemma 5.2]{Sharp}, $Q=\mu_m$. Therefore the last inequalities shrink to equalities.
	 \end{proof}Now, analogously to the vertex partition problems, we discuss hyperedge partition problems.
\begin{definition}
A \textbf{$k$-hyperedge partition} is a partition of the hyperedge set into $k$ disjoint sets, $H=H_1\sqcup\ldots\sqcup H_k$. The \textbf{balanced minimum $k$-hyperedge cut} is
\begin{equation*}
    \min_{\text{partition }(H_1,\ldots,H_k)}\sum_{i=1}^k\eta_p(H_i);
\end{equation*}The \textbf{maximum $k$-hyperedge cut} is
\begin{equation*}
    \max_{\text{partition }(H_1,\ldots,H_k)}\sum_{i=1}^ke_p(H_i).
\end{equation*}The \textbf{signed hyperedge coloring number}, denoted $\chi_{\sgn}^H$, is the minimal $k$ for which there exists a function $\gamma:H\rightarrow \{1,\ldots,k\}$ such that, for all $i\in V$, $\gamma(h)\neq \gamma(h')$ if $i$ is an input for $h$ and an output for $h'$.
\end{definition}The following lemma is the analog of some results regarding vertex partition problems. It relates the balanced minimum $k$-hyperedge cut and the maximum $k$-hyperedge cut to the smallest and largest eigenvalues of $\Delta_p^H$, respectively.
\begin{lemma}\label{lemma:edgepartitions}
      For each $\emptyset\neq \hat{H}\subseteq H$ and for each $p\geq 1$, $\mu_1\leq \eta_p(\hat{H})\leq \mu_m$. Therefore, in particular, for each $k\in\{2,\ldots,n\}$
        \begin{equation*}
            \mu_m\geq \frac1k\cdot \max\limits_{\text{partition }(H_1,\ldots,H_k)}\sum_{i=1}^k\eta_p(H_i)\geq \frac{1}{k\cdot \# H}\max\limits_{\text{partition }(H_1,\ldots,H_k)}\sum_{i=1}^k e_p(H_i)
        \end{equation*}and
        \begin{equation*}
             \mu_1\leq \frac1k\cdot \min\limits_{\text{partition }(V_1,\ldots,V_k)}\sum_{i=1}^k\eta_p(H_i).
        \end{equation*}
\end{lemma}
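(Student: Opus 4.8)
The plan is to mirror the proof of Lemma~\ref{lemma:ep}, replacing the vertex indicator test function by the hyperedge indicator and working with the hyperedge Rayleigh quotient. First I would take $\gamma'\in C(H)$ equal to $1$ on $\hat{H}$ and $0$ on $H\setminus\hat{H}$, and evaluate $\RQ_p(\gamma')$ directly from Definition~\ref{def:rq}. For each vertex $i$, the inner expression $\sum_{h'\ni i\text{ as input}}\gamma'(h')-\sum_{h''\ni i\text{ as output}}\gamma'(h'')$ simply counts the hyperedges of $\hat{H}$ in which $i$ is an input minus those in which $i$ is an output, i.e.\ it equals $\#(\hat{H}\cap i_{in})-\#(\hat{H}\cap i_{out})$. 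Hence the numerator of $\RQ_p(\gamma')$ is exactly $\sum_{i\in V}\frac{1}{\deg(i)}|\#(\hat{H}\cap i_{in})-\#(\hat{H}\cap i_{out})|^p=e_p(\hat{H})$, while the denominator $\sum_{h\in H}|\gamma'(h)|^p$ equals $\#\hat{H}$. This yields the key identity $\RQ_p(\gamma')=e_p(\hat{H})/\#\hat{H}=\eta_p(\hat{H})$.

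With this identity in hand, the two-sided bound is immediate: using the extremal characterizations $\mu_1=\min_{\gamma}\RQ_p(\gamma)$ and $\mu_m=\max_{\gamma}\RQ_p(\gamma)$ (which hold for all $p\ge1$ by Corollary~\ref{cor:minmaxRQ} together with its $p=1$ analog recorded after Theorem~\ref{thm:1-Laplace}), we get $\mu_1\le\RQ_p(\gamma')\le\mu_m$, that is, $\mu_1\le\eta_p(\hat{H})\le\mu_m$.

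To derive the partition consequences I would apply this bound to each block $H_i$ of a $k$-hyperedge partition. Summing $\mu_1\le\eta_p(H_i)\le\mu_m$ over $i=1,\ldots,k$ and dividing by $k$ gives $\mu_1\le\frac1k\sum_{i=1}^k\eta_p(H_i)\le\mu_m$ for every partition; taking the infimum (resp.\ supremum) over partitions produces the outermost inequalities. For the middle inequality I would use $\#H_i\le\#H$, so that $\eta_p(H_i)=e_p(H_i)/\#H_i\ge e_p(H_i)/\#H$; summing and then evaluating at a partition maximizing $\sum_i e_p(H_i)$ chains the two maxima into $\frac1k\max_P\sum_i\eta_p(H_i)\ge\frac{1}{k\,\#H}\max_P\sum_i e_p(H_i)$.

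I do not anticipate a genuine obstacle, since the argument is the verbatim hyperedge analog of Lemma~\ref{lemma:ep}. The only point meriting attention is that the maxima of $\sum_i\eta_p(H_i)$ and of $\sum_i e_p(H_i)$ need not be attained at the same partition, so the middle inequality must be obtained through the per-block comparison $\#H_i\le\#H$ rather than by assuming a common maximizer.
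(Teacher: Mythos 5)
Your proposal is correct and follows essentially the same route as the paper: test the Rayleigh quotient on the indicator function of $\hat{H}$ to obtain $\RQ_p(\gamma')=\eta_p(\hat{H})$, then invoke the extremal characterizations of $\mu_1$ and $\mu_m$, and apply the resulting bound blockwise to a partition. Your added care about the middle inequality (comparing via $\#H_i\le\#H$ rather than assuming a common maximizing partition) is a detail the paper leaves implicit, but it is the intended argument.
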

\begin{proof}
Given $H_i$, let $\gamma\in C(H)$ be $1$ on $H_i$ and $0$ otherwise. Then, $\RQ_p(\gamma)=\eta_p(H_i)$. Therefore, $\mu_1\leq \eta_p(H_i)\leq \mu_m$. The other claims follow by applying these inequalities to all elements of a partition.
\end{proof}

\begin{corollary}
Let $\chi_{\sgn}^H$ be the signed hyperedge coloring number of $\Gamma$ and let $H_1,\ldots,H_{\chi_{\sgn}^H}$ be the corresponding coloring classes. Let also $\Gamma_j:=(V,H_j)$ for $j\in\{1,\ldots,\chi_{\sgn}^H\}$. For each $p\geq 1$,
\begin{equation*}%\label{eq:signededges}
           \mu_1\leq \frac{1}{\chi_{\sgn}^H}\Biggl(\sum_{j=1}^{\chi_{\sgn}^H}\frac{1}{\# H_j}\cdot\sum_{i\in V}\frac{\#(H_j\cap i)^p}{\deg (i)}\Biggr) \leq \mu_m.
        \end{equation*}
\end{corollary}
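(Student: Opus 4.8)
The plan is to reduce the statement directly to Lemma~\ref{lemma:edgepartitions}, in exact parallel with the proof of the vertex signed coloring bound \eqref{eq:signedcolor} (which reduced to Lemma~\ref{lemma:ep}). The only real content is to rewrite $\eta_p(H_j)$ for a color class $H_j$ in terms of the quantity $\#(H_j\cap i)$ appearing in the statement; once this is done, the two-sided bound is a one-line averaging argument.

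First I would unpack the definition of a signed hyperedge coloring. By definition, within a single color class $H_j$ there is no vertex $i$ that is an input for one hyperedge of $H_j$ and an output for another; hence for every $i\in V$ at least one of $\#(H_j\cap i_{in})$, $\#(H_j\cap i_{out})$ vanishes. Therefore
$$|\#(H_j\cap i_{in})-\#(H_j\cap i_{out})| = \#(H_j\cap i_{in})+\#(H_j\cap i_{out}) = \#(H_j\cap i),$$
the total number of hyperedges of $H_j$ through $i$. Equivalently, each sub-hypergraph $\Gamma_j=(V,H_j)$ is bipartite, so the formula from the bipartite remark preceding Lemma~\ref{lemma:edgepartitions} applies.

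Substituting this identity into the definitions of $e_p$ and $\eta_p$ gives
$$\eta_p(H_j)=\frac{e_p(H_j)}{\#H_j}=\frac{1}{\#H_j}\sum_{i\in V}\frac{\#(H_j\cap i)^p}{\deg(i)},$$
which is precisely the $j$-th summand in the claimed expression. Applying Lemma~\ref{lemma:edgepartitions} to each nonempty class $H_j$ yields $\mu_1\le \eta_p(H_j)\le\mu_m$, and averaging these $\chi_{\sgn}^H$ inequalities---an average of numbers in $[\mu_1,\mu_m]$ stays in $[\mu_1,\mu_m]$---gives the desired bound.

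The argument is essentially mechanical; the only step requiring care is the orientation-collapse identity of the second paragraph, i.e.\ verifying that the defining property of the coloring forces $\min\{\#(H_j\cap i_{in}),\#(H_j\cap i_{out})\}=0$ and hence turns the signed count $e_p(H_j)$ into the unsigned one. No genuine obstacle is expected beyond this bookkeeping.
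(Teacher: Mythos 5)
Your proposal is correct and follows exactly the paper's own route: the orientation-collapse identity $|\#(H_j\cap i_{in})-\#(H_j\cap i_{out})|=\#(H_j\cap i)$ forced by the coloring, followed by an application of Lemma~\ref{lemma:edgepartitions} and averaging over the classes. The paper states this more tersely, but your added detail (verifying that one of the two counts vanishes in each class) is just an explicit spelling-out of the same argument.
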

\begin{proof}By definition of signed hyperedge coloring number, 
\begin{equation*}
     e_p(H_j)=\sum_{i\in V}\frac{1}{\deg (i)}|\#(H_j\cap i_{in})-\#(H_j\cap i_{out})|^p=\sum_{i\in V}\frac{\#(H_j\cap i)^p}{\deg (i)},
\end{equation*}for each coloring class $H_j$. Together with Lemma \ref{lemma:edgepartitions}, this proves the claim.
\end{proof}

\section*{Acknowledgments}
We thank Friedemann Schuricht and two anonymous referees for valuable suggestions and references. 

\bibliography{pLaplacians}	

\end{document}